\newcommand{\pr}{{\mathbb{P}}}
\newcommand{\eps}{\epsilon}
\newcommand{\lstar}{\ell^*}
\newcommand{\lstare}{\ell^*_e}
\newtheorem{lemma}{Lemma}[section]
\newtheorem{theorem}[lemma]{Theorem}
\newtheorem{conjecture}[lemma]{Conjecture}
\newtheorem{question}[lemma]{Question}
\newtheorem{corollary}[lemma]{Corollary}
\newtheorem{remark}[lemma]{Remark}
\newtheorem{problem}[lemma]{Problem}
\newtheorem{claim}[lemma]{Claim}
\newcommand{\sfrac}[2]{%
  \raisebox{2pt}{$#1$\hskip-1pt}\big/\raisebox{-3pt}{\hskip-1pt$#2$}}
\newcommand{\bfracb}[2]{\Bigl(\dfrac{#1}{#2}\Bigr)}
\newcommand{\cpa}[1]{#1^{(1)}}
\newcommand{\cpb}[1]{#1^{(2)}}
\newcommand{\cpi}[1]{#1^{(i)}}
\newcommand{\half}{\tfrac12}
\newcommand{\MF}{\mathcal{F}}
\newcommand{\MP}{\mathcal{MP}}
\newcommand{\ov}[1]{\widehat{#1}}
\newcommand{\proofend}{\qquad\hspace*{\fill}\mbox{$\Box$}}
\newcommand{\rme}{\mathrm{e}}
\newcommand{\rup}[1]{\lceil{#1}\rceil}
\newcommand{\sm}{\!\setminus\!}
\newenvironment{proof}{\begin{trivlist}\item[]\textbf{Proof}\mbox{ \ }}%
  {\qquad\hspace*{\fill}$\Box$\end{trivlist}}
\newcommand{\qitee}[1]{\noindent\leavevmode\hangindent1.5\parindent%
  \noindent\hbox to1.5\parindent{#1\hss}\ignorespaces}
\DeclareMathOperator{\ch}{ch}
\DeclareMathOperator{\dist}{dist}
\title{\textbf{List Colouring Squares of Planar Graphs}}
\author{\quad\\
  Fr\'ed\'eric Havet\,$^1$, \hspace{3mm}Jan van den Heuvel\,$^2$,
  \hspace{3mm}Colin McDiarmid\,$^3$,
  \hspace{3mm}and\hspace{3mm}Bruce Reed\,$^{1,4}$\\[3mm]
  $^1$ \emph{Uniersit\'e C\^ote d'Azur, CNRS, I3S et INRIA, Projet COATI,
    Sophia-Antipolis, France}\\[1mm]
  $^2$ \emph{Department of Mathematics, London School of Economics and
    Political Science, U.K.}\\[1mm]
  $^3$ \emph{Department of Statistics, University of Oxford, Oxford,
    U.K.}\\[1mm]
  $^4$
  \emph{Department of Computer Science, McGill University, Montreal,
    Canada}}
\begin{document}

\maketitle
{\renewcommand{\thefootnote}{\relax} \footnotetext{Part of the research for
    this paper was done during a visit of FH, JvdH and BR to the Department
    of Applied Mathematics (KAM) at the Charles University of Prague;
    during a visit of FH, CMcD and BR to Pacific Institute of Mathematics
    Station in Banff (while attending a focused research group program);
    during a visit of JvdH to INRIA in Sophia-Antipolis (funded by Hubert
    Curien programme Alliance 15130TD and the British Council Alliance
    programme), and during a visit of CMcD and BR to IMPA in Rio de
    Janiero. The authors would like to thank all institutes involved for
    their support and hospitality.

    \hskip7.5pt This research is also partially supported by the ANR under
    contract STINT ANR-13-BS02-0007.

    \hskip7.5pt Email: \texttt{frederic.havet@cnrs.fr},
    \texttt{j.van-den-heuvel@lse.ac.uk},
    \texttt{cmcd@stats.ox.ac.uk},\\
    \texttt{breed@cs.mcgill.ca}.}}

\begin{abstract}
  \noindent
  In 1977, Wegner conjectured that the chromatic number of the square of
  every planar graph with maximum degree $\Delta\ge8$ is at most
  $\bigl\lfloor\frac32\Delta\bigr\rfloor+1$. We show that it is at most
  $\frac32\Delta(1+o(1))$ (where the $o(1)$ is as $\Delta\to+\infty$), and
  indeed that this is true for the list chromatic number and for more
  general classes of graphs.
\end{abstract}

%%%%%%%%%%%%%%%%%%%%%%%%%%%%%
%%%%%%%%%%%%%%%%%%%%%%%%%%%%%

\section{Introduction}\label{intr}

Most of the terminology and notation we use in this paper is standard and
can be found in any text book on graph theory (such as~\cite{BoMu76}
or~\cite{Die05}). All our graphs and multigraphs will be finite. A
\emph{multigraph} can have multiple edges; a \emph{graph} is supposed to be
simple. We will not allow loops.

The \emph{degree} of a vertex is the number of edges incident with that
vertex. We require all colourings, whether we are discussing vertex, edge
or list colouring, to be \emph{proper}: neighbouring objects must receive
different colours. We also always assume that colours are integers, which
allows us to talk about the ``distance'' $|\gamma_1-\gamma_2|$ between two
colours~$\gamma_1,\gamma_2$.

Given a graph~$G$, the \emph{chromatic number} of~$G$, denoted~$\chi(G)$,
is the minimum number of colours required so that we can properly colour
its vertices using those colours. If we colour the edges of~$G$, we get the
\emph{chromatic index}, denoted~$\chi'(G)$.

Given a list~$L(v)$ of colours for each vertex $v$ of $G$, we say a
colouring is \emph{acceptable} (with respect to the lists) if it is proper
and every vertex gets assigned a colour from its own private list. The
\emph{list chromatic number} or \emph{choice number}~$\ch(G)$ is the
minimum value~$k$ such that, if we give each vertex of~$G$ a list of size
$k$, then there is an acceptable colouring. The \textit{list chromatic
  index} is defined analogously for edges. See~\cite{Wo01} for a survey of
research on list colouring of graphs. Note that the list $L(v)$ is really
just a set, but as is standard we refer to it as a list.

%%%%%%%%%%%%%%%%%%%%%%%%%%%%%

\subsection{Colouring the Square of a Graph}

Given a graph~$G$, the \emph{square of~$G$}, denoted~$G^2$, is the graph
with the same vertex set as~$G$ and with an edge between each pair of
distinct vertices that have distance at most two in~$G$. If~$G$ has maximum
degree~$\Delta$, then a vertex colouring of its square will need at least
$\Delta+1$ colours; the greedy algorithm shows it is always possible with
$\Delta^2+1$ colours. Diameter two cages such as the 5-cycle, the Petersen
graph and the Hoffman-Singleton graph (see \cite[page~84]{BoMu76}) show
that there exist graphs that in fact require $\Delta^2+1$ colours, for
$\Delta=2,3,7$, and possibly one for $\Delta=57$.

We are particularly interested in planar graphs. The celebrated Four Colour
Theorem by Appel and Haken~\cite{ApHa77a,ApHa77b,ApHa89} states that
$\chi(G)\le4$ for planar graphs $G$. Regarding the chromatic number of the
square of a planar graph, Wegner~\cite{Weg77} posed the following
conjecture (see also the book of Jensen and Toft~\cite[Section
2.18]{JeTo95}), suggesting that for planar graphs far less than
$\Delta^2+1$ colours suffice.

\begin{conjecture}[Wegner~\cite{Weg77}]\label{conj.W1}\mbox{}\\*
  For a planar graph~$G$ with maximum degree~$\Delta$,
  \[\chi(G^2)\:\le\:\left\{
    \begin{array}{ll}
      7,&\text{if $\Delta=3$,}\\
      \Delta+5,&\text{if $4\le\Delta\le7$,}\\
      \bigl\lfloor\frac32\Delta\bigr\rfloor+1,&\text{if $\Delta\ge8$.}
    \end{array}\right.\]
\end{conjecture}

\noindent
Wegner also gave examples showing that these bounds would be tight. For
even $\Delta\ge8$, these examples are sketched in Figure~\ref{tightfig}.
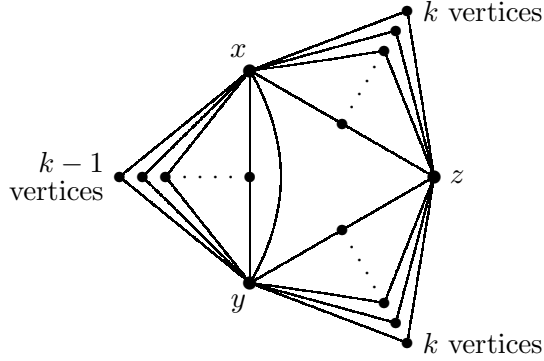
\begin{figure}[!hbtp]
  \begin{center}
    \unitlength0.51mm
    \begin{picture}(82,94)(-50,-45)
      \put(-50,0){\circle*{3}}\put(-44,0){\circle*{3}}
      \put(-38,0){\circle*{3}}\put(-33,0){\circle*{1}}
      \put(-29,0){\circle*{1}}\put(-25,0){\circle*{1}}
      \put(-21,0){\circle*{1}}\put(-16,0){\circle*{3}}
      \put(25,43.30){\circle*{3}}\put(22,38.11){\circle*{3}}
      \put(19,32.91){\circle*{3}}\put(16.5,28.58){\circle*{1}}
      \put(14.5,25.11){\circle*{1}}\put(12.5,21.65){\circle*{1}}
      \put(10.5,18.19){\circle*{1}}\put(8,13.86){\circle*{3}}
      \put(25,-43.30){\circle*{3}}\put(22,-38.11){\circle*{3}}
      \put(19,-32.91){\circle*{3}}\put(16.5,-28.58){\circle*{1}}
      \put(14.5,-25.11){\circle*{1}}\put(12.5,-21.65){\circle*{1}}
      \put(10.5,-18.19){\circle*{1}}\put(8,-13.86){\circle*{3}}
      \put(-16,27.71){\circle*{3.5}}\put(-16,-27.71){\circle*{3.5}}
      \put(32,0){\circle*{3.5}}
      \qbezier(-16,27.71)(-16,13.855)(-16,0)
      \qbezier(-16,27.71)(-27,13.855)(-38,0)
      \qbezier(-16,27.71)(-30,13.855)(-44,0)
      \qbezier(-16,27.71)(-33,13.855)(-50,0)
      \qbezier(-16,-27.71)(-16,-13.855)(-16,0)
      \qbezier(-16,-27.71)(-27,-13.855)(-38,0)
      \qbezier(-16,-27.71)(-30,-13.855)(-44,0)
      \qbezier(-16,-27.71)(-33,-13.855)(-50,0)
      \qbezier(-16,27.71)(-4,20.785)(8,13.86)
      \qbezier(-16,27.71)(1.5,29.95)(19,32.91)
      \qbezier(-16,27.71)(3,32.91)(22,38.11)
      \qbezier(-16,27.71)(4.5,35.505)(25,43.30)
      \qbezier(-16,-27.71)(-4,-20.785)(8,-13.86)
      \qbezier(-16,-27.71)(1.5,-29.95)(19,-32.91)
      \qbezier(-16,-27.71)(3,-32.91)(22,-38.11)
      \qbezier(-16,-27.71)(4.5,-35.505)(25,-43.30)
      \qbezier(32,0)(20,6.93)(8,13.86)
      \qbezier(32,0)(25.5,16.455)(19,32.91)
      \qbezier(32,0)(27,19.055)(22,38.11)
      \qbezier(32,0)(28.5,21.65)(25,43.30)
      \qbezier(32,0)(20,-6.93)(8,-13.86)
      \qbezier(32,0)(25.5,-16.455)(19,-32.91)
      \qbezier(32,0)(27,-19.055)(22,-38.11)
      \qbezier(32,0)(28.5,-21.65)(25,-43.30)
      \qbezier(-16,27.71)(0,0)(-16,-27.71)
      \put(-54,3.5){\makebox(0,0)[r]{$k-1$}}
      \put(-54,-3.5){\makebox(0,0)[r]{vertices}}
      \put(29,43.40){\makebox(0,0)[l]{$k$ vertices}}
      \put(29,-43.40){\makebox(0,0)[l]{$k$ vertices}}
      \put(38,0){\makebox(0,0){$z$}}
      \put(-19,32.91){\makebox(0,0){$x$}}
      \put(-19,-32.91){\makebox(0,0){$y$}}
    \end{picture}
    \caption{The planar graph $G_k$.}\label{tightfig}
  \end{center}
\end{figure}
The graph~$G_k$ consists of three vertices $x$, $y$ and~$z$ together with
$3k-1$ additional vertices with degree two, such that~$z$ has~$k$ common
neighbours with~$x$ and~$k$ common neighbours with~$y$, and $x$ and~$y$ are
adjacent and have $k-1$ common neighbours. This graph has maximum
degree~$2k$ and yet all the vertices except~$z$ are adjacent in its square.
Hence to colour these $3k+1$ vertices, we need at least
$3k+1=\frac32\Delta+1$ colours.

Kostochka and Woodall~\cite{KoWo01} conjectured that for every square of a
graph the list chromatic number equals the chromatic number. This
conjecture was first disproved by Kim and Park~\cite{KP15}. Since then more
counterexamples have been found \cite{KKP15,KP15b,KSRY14}. All these
counterexamples are not planar, which gives us hope that Kostochka and
Woodall's original conjecture is true for planar graphs.

\begin{conjecture}\label{conj.W2}\mbox{}\\*
  For a planar graph~$G$ with maximum degree~$\Delta$,
  \[\ch(G^2)\:\le\:\left\{
    \begin{array}{ll}
      7,&\text{if $\Delta=3$,}\\
      \Delta+5,&\text{if $4\le\Delta\le7$,}\\
      \bigl\lfloor\frac32\Delta\bigr\rfloor+1,&\text{if $\Delta\ge8$.}
    \end{array}\right.\]
\end{conjecture}

\noindent
Wegner also showed that if~$G$ is a planar graph with $\Delta=3$,
then~$G^2$ can be 8-coloured. Thomassen~\cite{Tho06} established Wegner's
conjecture for $\Delta=3$ using an involved structural result on subcubic
(i.e.\ with $\Delta\le3$) graphs; while Hartke \emph{et al}.~\cite{HJT16}
proved the same using the discharging method and a serious amount of
computer time. Cranston and Kim~\cite{CrKi} showed that the square of every
connected graph (not necessarily planar) which is subcubic is 8-choosable,
except for the Petersen graph. However, the 7-choosability of the squares
of subcubic planar graphs is still open.

The first upper bound on~$\chi(G^2)$ for planar graphs that is linear
in~$\Delta$, namely $\chi(G^2)\le 8\Delta-22$, was implicit in the work of
Jonas~\cite{Jon93}. (The results in~\cite{Jon93} deal with
$L(2,1)$-labellings, see below, but the proofs are easily seen to be
applicable to colouring the square of a graph as well.) This bound was
later improved by Wong~\cite{Won96} to $\chi(G^2)\le 3\Delta+5$, and then
by Van den Heuvel and McGuinness~\cite{vdHMG} to $\chi(G^2)\le 2\Delta+25$.
Better bounds were then obtained for large values of~$\Delta$. It was shown
that $\chi(G^2)\le\bigl\lceil\frac95\Delta\bigr\rceil+1$ for $\Delta\ge750$
by Agnarsson and Halld\'orsson~\cite{AgHa00}, and the same bound for
$\Delta\ge47$ by Borodin \emph{et al}.~\cite{Bor+}. Finally, the
asymptotically best known upper bound so far has been obtained by Molloy
and Salavatipour~\cite{MoSa02} as a special case of Theorem~\ref{MoSa}
below.

\begin{theorem}[Molloy and Salavatipour~\cite{MoSa02}]\label{MoSa2}
  \mbox{}\\*
  For a planar graph~$G$ with maximum degree $\Delta$,
  \[\chi(G^2)\:\le\: \tfrac53\Delta +78.\]
\end{theorem}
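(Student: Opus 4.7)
The proof follows the standard scheme for bounds on $\chi(G^2)$ for planar graphs: a minimum-counterexample analysis combining reducible configurations with a discharging argument. Since the local arguments below never use a common palette, I would first pass to the list version $\mathit{ch}(G^2)\le\lceil\frac53\Delta\rceil+78$, let $G$ be a planar counterexample minimizing $|V(G)|+|E(G)|$, and fix a list assignment $L$ of size $k=\lceil\frac53\Delta(G)\rceil+78$ for which $G^2$ admits no proper $L$-colouring.

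The first task is to compile a catalogue $\mathcal{C}$ of forbidden \emph{reducible configurations}. Each reducibility claim follows the same template: delete or locally modify a small subgraph $H\subseteq G$, invoke minimality to list-colour the square of the residual graph, and then show by a counting argument that each $v\in H$ has at most $k-1$ colours used on its $G^2$-neighbours, leaving a free colour in $L(v)$. Typical members of $\mathcal{C}$ are a vertex of degree at most $2$; a vertex of degree $3$ with a low-degree neighbour; adjacent pairs of small vertices with bounded joint second neighbourhood; and short facial cycles all of whose vertices are small. The threshold separating ``small'' from ``large'' degrees is tuned against the target coefficient $\frac53$.

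The second task is the discharging argument on a planar embedding of $G$. Assign initial charge $\mu_0(v)=d(v)-4$ to each vertex and $\mu_0(f)=|f|-4$ to each face, so that Euler's formula gives $\sum\mu_0=-8$. Design rules by which large vertices and long faces deliver charge to nearby small vertices and short faces, so that in the absence of every configuration in $\mathcal{C}$ each vertex and each face finishes with non-negative charge, contradicting the negative total.

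The principal obstacle is the simultaneous calibration of $\mathcal{C}$ and the discharging rules: enriching $\mathcal{C}$ makes the discharging tractable but raises the smallest $k$ for which each new configuration is reducible, whereas a sparser $\mathcal{C}$ forces more elaborate charge transfers and usually a larger leading constant. The value $\frac53$ is the best coefficient for which these two constraints meet, and the additive $78$ absorbs rounding, low-$\Delta$ corner cases, and the contribution of short faces to both the reducibility and the discharging estimates.
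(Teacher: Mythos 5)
This theorem is a cited background result from Molloy and Salavatipour~\cite{MoSa02}; the present paper does not reprove it, so there is no in-paper proof to compare against. I will therefore assess your sketch against the actual argument in the cited source.

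Your high-level framework---a minimum counterexample, a catalogue of reducible configurations, and a discharging argument to show one must occur---does match the overall shape of the Molloy--Salavatipour proof. The fundamental gap, however, is in the extension step. You write that after deleting the configuration one can ``show by a counting argument that each $v\in H$ has at most $k-1$ colours used on its $G^2$-neighbours, leaving a free colour in $L(v)$.'' That naive greedy count cannot on its own reach the coefficient $\frac53$. In a planar graph a vertex $v$ of degree~$3$ can have three neighbours of degree close to~$\Delta$ whose neighbourhoods are pairwise almost disjoint, so that $d_{G^2}(v)$ is close to $3\Delta$; more generally it is not true that every planar graph contains a vertex with at most $\lceil\frac53\Delta\rceil+77$ vertices within distance~$2$. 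A discharging argument cannot conjure such a vertex into existence, because no such vertex need exist. Consequently the reducible configurations in the real proof are considerably more structured than ``vertex of low $G^2$-degree, delete, greedily extend'': Molloy and Salavatipour must identify configurations (involving several vertices, their big neighbours, and the faces among them) in which planarity forces the neighbourhoods of the relevant big vertices to overlap heavily, and the extension argument exploits these overlaps---and sometimes requires recolouring or handling several vertices jointly---to bring the effective number of forbidden colours below $k$. This is precisely where the constant $\frac53$ is actually earned, and your sketch elides it entirely. The discharging rules and initial charges you posit ($\mu_0(v)=d(v)-4$, $\mu_0(f)=|f|-4$) are a plausible default but are offered as a guess rather than a derivation; the genuine difficulty lies upstream, in establishing reducibility of configurations rich enough to make any discharging scheme close.

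A secondary remark: you propose passing to the list version $\mathit{ch}(G^2)\le\lceil\frac53\Delta\rceil+78$ at the outset. The cited theorem is stated for $\chi$, and while the authors do prove a more general statement about $L(p,q)$-labellings, asserting the list-chromatic strengthening without verifying that every reducibility step is list-compatible is a claim that needs justification, not an automatic consequence of the reductions ``never using a common palette.''
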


\noindent
As mentioned in \cite{MoSa02}, the constant 78 can be reduced for
sufficiently large~$\Delta$; the paper improves it to~24 when
$\Delta\ge241$.

In this paper we prove the following theorem.

\begin{theorem}\label{mtplanar}\mbox{}\\*
  The square of every planar graph~$G$ with maximum degree~$\Delta$ has
  list chromatic number at most $(1+o(1))\frac32\Delta$. Moreover, given
  lists of this size, there is an acceptable colouring in which the colours
  on every pair of adjacent vertices of~$G$ differ by at
  least~$\Delta^{1/4}$.
\end{theorem}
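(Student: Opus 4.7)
The plan is to prove Theorem~\ref{mtplanar} in two stages: first, a structural step that bounds the maximum degree of $G^2$ by $(1+o(1))\,\frac32\,\Delta$ using planarity, and then a probabilistic colouring step that produces a proper list colouring of $G^2$ satisfying the additional $\Delta^{1/4}$ frequency-separation condition. The tight examples $G_k$ of Figure~\ref{tightfig} guide the intuition: the $\frac32\,\Delta$ barrier arises from ``fan''-type configurations in which a high-degree vertex's neighbourhood is split into a small number of clusters sharing many common second-neighbours, and we expect planarity to force any near-extremal local structure to resemble these examples.

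The first step is a structural lemma asserting that for every vertex $v$ of a planar graph $G$ with maximum degree~$\Delta$, the number of vertices at distance at most two from $v$ is at most $(1+o(1))\,\frac32\,\Delta$. The proof idea is to apply the planar edge-density bound to the subgraph induced on $\{v\}\cup N(v)\cup N^2(v)$, double-count edges between $N(v)$ and $N^2(v)$ weighted by the number of common neighbours, and use a discharging argument to show that any deviation from the extremal $G_k$-type pattern decreases the total count. This immediately yields $\Delta(G^2)\le(1+o(1))\,\frac32\,\Delta$, and by degree-choosability the list chromatic number of $G^2$ already satisfies the desired bound absent the separation constraint.

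To obtain the $\Delta^{1/4}$-separation I plan to use a probabilistic colouring combined with the Lov\'asz Local Lemma (LLL) and Talagrand's inequality. With lists of size $L=(1+o(1))\,\frac32\,\Delta$, assign every vertex a uniformly random colour from its list and consider two families of bad events: (i) a pair of $G^2$-adjacent vertices receiving equal colours, of probability $1/L=O(\Delta^{-1})$; and (ii) a pair of $G$-adjacent vertices receiving colours within $\Delta^{1/4}$, of probability $O(\Delta^{-3/4})$. The expected number of bad events at any vertex is then $O(1)+O(\Delta^{1/4})$. Talagrand's inequality is used to prove that the number of type-(ii) bad events concentrates sharply around its mean, and LLL is applied to the resulting rare-deviation bad events. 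Because the dependency structure is local (bounded by powers of $\Delta$), the LLL condition reduces to checking that $L$ is sufficiently larger than a polynomial in $\Delta$ times $\Delta^{1/4}$, which holds with the slack provided by the $(1+o(1))$ factor.

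The main obstacle is that the expected number of near-colour conflicts per vertex, namely $\Theta(\Delta^{1/4})$, is too large for a direct one-shot LLL argument: the bad events are too numerous to be all avoided simultaneously by random colouring. I would therefore use an iterative ``nibble'' in which each round randomly partially colours the graph, retains only the assignments that locally satisfy both the $G^2$-properness and the $\Delta^{1/4}$-gap condition (these succeed with positive density by Talagrand-plus-LLL), and recurses on the uncoloured remainder with updated lists. The quantitative challenge is to track how the effective maximum degree and the available list sizes shrink across nibble rounds, and to ensure that the planar structural lemma of Step~1 continues to apply throughout, so that after $O(\log\Delta)$ rounds the leftover instance has maximum degree much smaller than its list sizes and can be finished greedily.
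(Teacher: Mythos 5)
There is a fatal gap at the very first step. Your structural lemma --- that every vertex of a planar graph of maximum degree $\Delta$ has at most $(1+o(1))\,\frac32\,\Delta$ vertices within distance two --- is false. Take a star $K_{1,\Delta}$ and attach $\Delta-1$ pendant leaves to each leaf; this is a tree, hence planar, and its centre has $\Delta^2$ vertices at distance at most two, so $\Delta(G^2)$ can be as large as $\Delta^2$. What planarity bounds by roughly $\frac32\,\Delta$ is the \emph{clique number} of $G^2$ (\,as the examples $G_k$ show\,), not its maximum degree, and no degree-choosability argument can convert a clique-number bound into a list-chromatic bound. The whole difficulty of the theorem is precisely that one must colour $G^2$ with far fewer colours than its maximum degree.

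This also undermines the second step: with $\Delta(G^2)$ possibly of order $\Delta^2$ and lists of size only $\frac32\,\Delta$, a uniformly random assignment followed by the Local Lemma (\,or a generic nibble\,) cannot succeed, because the expected number of monochromatic $G^2$-edges at a vertex is of order $\Delta$ rather than $o(1)$, and no amount of iteration closes that gap without exploiting structure. The paper's route is entirely different: it proceeds by induction, repeatedly removing either a single ``removable'' vertex of $G^2$-degree below $\frac32\,\Delta+\Delta^{1/2}$ (\,coloured greedily\,) or the core of a removable subdivided multigraph $H^*$; the subgraph of $G^2$ induced on such a core is essentially the line graph $L(H)$ plus two sparse perturbation graphs $J_1,J_2$, and it is coloured using Kahn's semi-random nibble for list edge-colouring, driven by hard-core distributions on matchings whose marginals are verified to lie well inside Edmonds' matching polytope. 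If you want a probabilistic colouring argument to work here, you must first isolate where $G^2$ is locally dense and recognise that those dense pieces are (\,near-\,)line graphs of multigraphs; that identification, via the lemma producing the removable copy of $H^*$ with the odd-set degree condition, is the missing idea.
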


\noindent
A more precise statement is as follows. For each $\eps>0$, there is
a~$\Delta_\eps$ such that for every $\Delta\ge\Delta_\eps$ we have: for
every planar graph~$G$ with maximum degree at most~$\Delta$, and for all
vertex lists each of size at least $\bigl(\frac32+\eps\bigr)\Delta$, there
is an acceptable colouring of~$G$, with the further property that the
colours on every pair of adjacent vertices of~$G$ differ by at
least~$\Delta^{1/4}$.

The~$o(1)$ term in the theorem is as $\Delta\longrightarrow+\infty$. The
first order term $\frac32\Delta$ in Theorem~\ref{mt} is best possible, as
the examples in Figure~\ref{tightfig} show. On the other hand, the
term~$\Delta^{1/4}$ is probably far from best possible; it was chosen to
keep the proof simple. The main point, to our minds, is that this parameter
tends to infinity as $\Delta\longrightarrow+\infty$.

In~\cite{AEH2013}, the first part of Theorem~\ref{mtplanar} is extended to
graphs $G$ embeddable in any fixed surface\footnote{We note that
  \cite{AEH2013} was written after the results in this paper were obtained,
  due to the lengthy amount of time this paper has spent in the revision
  process (which is the fault of the authors), and combines the techniques
  developed in this paper with other arguments}. That paper also considers
the more general framework of $\Sigma$-colourings, where for each
vertex~$v$ a subset $\Sigma(v)\subseteq N_G(v)$ of the neighbourhood of~$v$
is given, and two vertices $u,w$ only need to receive a different colour if
$uw\in E(G)$ or $u,w\in\Sigma(v)$ for some~$v$. This concept unifies
ordinary colourings (taking $\Sigma(v)=\varnothing$ for all~$v$) and
colourings of the square~$G^2$ (taking $\Sigma(v)=N_G(v)$ for all~$v$). It
also includes so-called \emph{cyclic colourings} of graphs that are
embedded in a surface, where vertices that share a face must be coloured
differently.

Here we extend Theorem~\ref{mtplanar} to every \emph{nice family of
  graphs}, which are those minor-closed families of graphs such that there
is some $k$ for which the complete bipartite graph $K_{3,k}$ is not in the
family.

\begin{theorem}\label{mt}\mbox{}\\*
  Let~$\MF$ be a nice family of graphs. The square of every graph~$G$
  in~$\MF$ with maximum degree~$\Delta$ has list chromatic number at most
  $\bigl(\frac32+o(1)\bigr)\Delta$. Moreover, given lists of this size,
  there is a proper colouring in which the colours on every pair of
  adjacent vertices of~$G$ differ by at least~$\Delta^{1/4}$.
\end{theorem}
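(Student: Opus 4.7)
The plan is to use the probabilistic method, specifically the Lovász Local Lemma together with Talagrand-type concentration, combined with the structural sparsity provided by niceness. The approach broadly follows the Molloy--Salavatipour framework behind Theorem~\ref{MoSa2}, but squeezes a better leading constant out of the axiom defining nice families. First I split $V(G)$ into a set $H$ of \emph{high-degree} vertices, of degree at least $\Delta^{1-\delta}$ for some small $\delta>0$, and the remaining \emph{low-degree} vertices~$L$. Any vertex in~$L$ has at most $O(\Delta^{2-2\delta})$ neighbours in $G^2$, well below the list size $(3/2+\epsilon)\,\Delta$, so once the high-degree part is coloured the low-degree vertices can be completed greedily while still respecting the $\Delta^{1/4}$ distance constraint. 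The hard case is the high-degree part\,: for each $v\in H$, the closed neighbourhood $N_G[v]$ is a clique of size $\Delta+1$ in $G^2$ that must receive $\Delta+1$ distinct colours from lists of size $(3/2+\epsilon)\,\Delta$, leaving a slack of only $(1/2+\epsilon)\,\Delta$.

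The niceness hypothesis is what creates enough slack to fit $\Delta+1$ colours into $\tfrac32\,\Delta$. Taking $B=H$ in the definition, the vertices outside $H$ with at least three neighbours in~$H$ contribute at most $\beta_\MF\,|H|$ edges. Combined with closure under minors (applied to induced subgraphs on neighbourhoods), this yields two quantitative facts\,: (i) for each $v\in H$, the graph $G[N_G(v)]$ has $O(\Delta)$ edges, and in particular has bounded degeneracy; and (ii) the average pair of vertices in $H$ shares only $O(1)$ common $G$-neighbours. Fact~(i) implies that $N_G(v)$ partitions into $O(1)$ independent sets in $G$, which is the source of the $\tfrac32$-factor gain over the trivial $\Delta+1$ bound; fact~(ii) controls the dependency structure of the probabilistic analysis.

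The probabilistic step is a random trial in which every vertex $v$ is independently assigned a tentative colour $c(v)$ chosen uniformly from its list, and is then uncoloured if either (a) some $G^2$-neighbour of $v$ received the same colour, or (b) some $G$-neighbour of $v$ received a colour within $\Delta^{1/4}$ of $c(v)$. A bad event of type~(a) occurs with probability $O(1/\Delta)$ per $G^2$-neighbour, and a bad event of type~(b) blocks only a $O(\Delta^{-3/4})$ fraction of the list, so the ``distance at least $\Delta^{1/4}$'' side condition is essentially free. Using (i)--(ii) to bound the dependency degree and the second-moment contribution of correlated bad events, Talagrand concentration shows that with high probability the number of permissible colours remaining in each still-uncoloured vertex's list is at least $\Delta^{1/4}$; the asymmetric Lovász Local Lemma then promotes ``with high probability'' into ``with positive probability, simultaneously for every vertex''.

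The main obstacle is the concentration analysis of the random trial \emph{around high-degree vertices}\,: one must show that the $\Delta$ tentative colours assigned to $N_G(v)$ are almost bijective, losing only $o(\Delta)$ colours to collisions within the clique so that the $(1/2)\,\Delta$ slack is not eroded. This is precisely where the sparsity of $G[N_G(v)]$ coming from fact~(i) is essential, and precisely where the $o(1)$ loss in $(1+o(1))\,\frac32\,\Delta$ originates. Once the probabilistic phase succeeds and every uncoloured vertex has at least $\Delta^{1/4}$ permissible colours while being constrained by at most $\Delta^{1/4}$ already-coloured neighbours in $G^2$ (including the distance-$\Delta^{1/4}$ requirement on $G$-neighbours), a single greedy pass finishes the list-colouring.
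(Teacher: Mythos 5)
Your proposal identifies the right general territory (LLL plus concentration plus sparsity from niceness), but it misses the central idea that actually produces the constant $\frac32$, and the specific mechanism you describe would not deliver it. You write that ``$N_G(v)$ partitions into $O(1)$ independent sets in $G$, which is the source of the $\tfrac32$-factor gain over the trivial $\Delta+1$ bound'' --- this is a non-sequitur. The clique $N_G[v]$ in $G^2$ needs $\Delta+1$ distinct colours no matter how $G[N_G(v)]$ decomposes; bounded degeneracy of $G[N_G(v)]$ does not shrink a $(\Delta+1)$-clique. The actual bottleneck is several \emph{overlapping} cliques of this kind: in the extremal graphs $G_k$ of Figure~\ref{tightfig}, the clique of size $\frac32\Delta+1$ in $G_k^2$ arises from the three high-degree vertices $x,y,z$ together with the degree-2 vertices that lie on the ``fat edges'' $xy$, $xz$, $yz$. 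Reinterpreting those degree-2 vertices as edges of a multigraph $H$ on $\{x,y,z\}$, the induced subgraph of $G^2$ on them is exactly the line graph $L(H)$, and the clique of size $\frac32\Delta$ is a clique of $L(H)$ coming from an odd (triangle) set. The $\frac32$ therefore comes from Edmonds' characterisation of the matching polytope (Theorem~\ref{mpthm}) --- specifically the odd-set constraints, which force $|L(e)|\gtrsim\frac32\Delta$ for the marginal vector $x_e=1/|L(e)|$ to lie in $\MP(H)$ --- and not from any local sparsity statement about neighbourhoods. This is also why the paper needs Kahn's matching-based algorithm rather than the ``independent colour then uncolour'' LLL trial you propose.

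More concretely, the random trial you sketch, together with Talagrand and the asymmetric Local Lemma, is essentially the Molloy--Salavatipour engine, and it plateaus at $\frac53\Delta$ precisely because a naive iterated vertex-colouring cannot exploit the line-graph structure. The paper's route is different: Lemmas~\ref{lemrem}--\ref{lemeuler} set up an induction which, at each step, either removes a single low-$G^2$-degree removable vertex or removes the core $R$ of a removable subdivided multigraph $H^*$; the degree-sum condition on odd sets $X\subseteq V(H)$ in Lemma~\ref{lemeuler}~(B) is exactly what feeds into Edmonds' odd-set constraints in Lemma~\ref{lemkey}. Then Lemma~\ref{lemnotover} translates the problem of extending the partial colouring to $R$ into a list edge-colouring problem for $H$ (with auxiliary constraint graphs $J_1,J_2$ of degree $O(\Delta^{1/2})$), and this is handled by a modified version of Kahn's hard-core-distribution algorithm (Theorem~\ref{kahnth}). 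Your approach is missing both the structural reduction to a line graph and the use of hard-core distributions on matchings, and I don't see how to close the gap from $\frac53$ to $\frac32$ without them. Separately, your ``fact~(ii)'' (pairs of high-degree vertices share $O(1)$ common $G$-neighbours) is false in the extremal examples --- $x$ and $z$ in $G_k$ share $\Delta/2$ common neighbours --- so even the dependency-control argument you sketch would need a substantially different formulation.
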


\noindent
Kuratowski's theorem tells us that planar graphs form a nice family. So do
graphs which are embeddable in a fixed surface. For, by Euler's formula, if
a bipartite graph with $n$ vertices and~$e$ edges embeds in a surface
$\Sigma$ of Euler genus $g$, then $e\le 2(n+g-2)$; and so $K_{3,k}$ does
not embed in $\Sigma$ if $k>2g+2$.

Note that~$K_{3,3}$ has~$K_4$ as a minor, and so $K_4$-minor-free graphs
(that is, series-parallel graphs) form a nice class. Lih, Wang and
Zhu~\cite{LWZ03} showed that the square of a $K_4$-minor-free graph with
maximum degree~$\Delta$ has chromatic number at most
$\bigl\lfloor\frac32\Delta\bigr\rfloor+1$ if $\Delta\ge4$ and $\Delta+3$ if
$\Delta=2,3$. The same bounds, but then for the list chromatic number of
the square of a $K_4$-minor-free graph, were proved by Hetherington and
Woodall~\cite{HW07}.

%%%%%%%%%%%%%%%%%%%%%%%%%%%%%

\subsection{$L(p,q)$-Labellings of Graphs}

Vertex colourings of squares of graphs can be considered a special case of
a more general concept: $L(p,q)$-labellings of graphs. This topic takes
some of its inspiration from so-called channel assignment problems in radio
or cellular phone networks, see for example~\cite{LeeseHurley}. The basic
channel assignment problem is the following: we need to assign radio
frequency channels to transmitters (each gets one channel which corresponds
to an integer). In order to avoid interference, if two transmitters are
very close, then the separation of the channels assigned to them has to be
large enough. Moreover, if two transmitters are close but not very close,
then they must also receive channels that are sufficiently far apart.

An idealised version of such a problem may be modelled by
$L(p,q)$-labellings of a graph~$G$, where~$p$ and~$q$ are non-negative
integers. The vertices of this graph correspond to the transmitters and two
vertices are linked by an edge if they are very close. Two vertices are
then considered close if they are at distance two in the graph. Let
$\dist(u,v)$ denote the distance between the two vertices~$u$ and~$v$. An
\emph{$L(p,q)$-labelling of~$G$} is an integer assignment~$f$ to the vertex
set~$V(G)$ such that:

\smallskip
\qitee{\quad$\bullet$}$|f(u)-f(v)|\ge p\;$ if $\dist(u,v)=1$, and

\smallskip
\qitee{\quad$\bullet$}$|f(u)-f(v)|\ge q \;$ if $\dist(u,v)=2$.

\smallskip\noindent
It is natural to assume that $p\ge q$, and we do so throughout.

The \emph{span} of~$f$ is the difference between the largest and the
smallest labels of~$f$ plus one. The \emph{$\lambda_{p,q}$-number} of~$G$,
denoted by $\lambda_{p,q}(G)$, is the minimum span over all
$L(p,q)$-labellings of~$G$.

The problem of determining $\lambda_{p,q}(G)$ has been studied for some
specific classes of graphs (see the survey of Yeh~\cite{Yeh06}).
Generalisations of $L(p,q)$-labellings have also been studied in which a
minimum gap of~$p_i$ is required for channels assigned to vertices at
distance~$i$, for several values $i=1,2,\ldots$ (see for
example~\cite{Kral} or~\cite{LiZh05}).

Moreover, very often, because of technical reasons or dynamicity, the set
of channels available varies from transmitter to transmitter. Therefore one
has to consider the list version of $L(p,q)$-labellings. A \emph{$k$-list
  assignment~$L$} of a graph is a function which assigns to each vertex~$v$
of the graph a list~$L(v)$ of~$k$ prescribed integers. Given a graph~$G$,
the \emph{list $\lambda_{p,q}$-number}, denoted $\lambda^l_{p,q}(G)$, is
the smallest integer~$k$ such that, for every $k$-list assignment~$L$
of~$G$, there exists an $L(p,q)$-labelling~$f$ such that $f(v)\in L(v)$ for
every vertex~$v$. Surprisingly, list $L(p,q)$-labellings have received very
little attention and appear only quite recently in the
literature~\cite{KSTM05}. However, some of the proofs for
$L(p,q)$-labellings also work for list $L(p,q)$-labellings.

Note that $L(1,0)$-labellings of~$G$ correspond to ordinary vertex
colourings of~$G$ and $L(1,1)$-labellings of~$G$ to vertex colourings of
the square of~$G$: thus $\lambda_{1,0}(G)=\chi(G)$,
$\lambda^l_{1,0}(G)=\ch(G)$, $\lambda_{1,1}(G)=\chi(G^2)$, and
$\lambda^l_{1,1}(G)=\ch(G^2)$.

It is well known that for a graph~$G$ with clique number~$\omega$ (the size
of a maximum clique in~$G$) and maximum degree~$\Delta$ we have
$\omega\le\chi(G)\le\ch(G)\le\Delta+1$. Similar easy inequalities may be
obtained for $L(p,q)$-labellings:
\[q\,\omega(G^2)-q+1\:\le\: \lambda_{p,q}(G)\:\le\:
d\lambda^l_{p,q}(G)\:\le\: p\,\Delta(G^2)+1.\]
(Recall that we assume throughout that $p\ge q$.) As
$\omega(G^2)\ge\Delta(G)+1$, the previous inequality gives
$\lambda_{p,q}(G)\ge q\Delta+1$. However, a straightforward argument shows
that in fact we must have $\lambda_{p,q}(G)\ge q\Delta+p-q+1$. In the same
way, $\Delta(G^2)\le(\Delta(G))^2$ so $\lambda^l_{p,q}(G)\le
p(\Delta(G))^2+1$. The ``many-passes'' greedy algorithm (see~\cite{McD03})
gives the alternative bound
\[\lambda^l_{p,q}(G)\:\le\: q\Delta(G)(\Delta(G)-1)+p\Delta(G)+1\:=\:
q(\Delta(G))^2+(p-q)\Delta(G)+1.\]

Because for many large-scale networks the transmitters are laid out on the
surface of the earth, $L(p,q)$-labellings of planar graphs are of
particular interest. There are planar graphs for which
$\lambda_{p,q}\ge\frac32q\Delta+c(p,q)$, where $c(p,q)$ is a constant
depending on~$p$ and~$q$. We already saw some of those examples in
Figure~\ref{tightfig}. The graph~$G_k$ has maximum degree~$2k$ and yet its
square contains a clique with $3k+1$ vertices (all the vertices
except~$z$). Labelling the vertices in the clique already requires a span
of at least $q\cdot 3k+1=\frac32q\Delta+1$.

A first upper bound on $\lambda_{p,q}(G)$, for planar graphs~$G$ and
positive integers $p\ge q$ was proved by Van den Heuvel and
McGuinness~\cite{vdHMG}: $\lambda_{p,q}(G)\le 2(2q-1)\Delta+10p+38q-24$.
Molloy and Salavatipour~\cite{MoSa02} improved this bound by showing the
following.

\begin{theorem}[Molloy and Salavatipour~\cite{MoSa02}]\label{MoSa}
  \mbox{}\\*
  For positive integers $p\ge q$, and a planar graph~$G$ with maximum
  degree $\Delta$,
  \[\lambda_{p,q}(G)\:\le\:
  q\bigl\lceil\tfrac53\Delta\bigr\rceil+18p+77q-18.\]
\end{theorem}

\noindent
Moreover, they described an $O(n^2)$ time algorithm for finding an
$L(p,q)$-labelling with span at most the bound in their theorem.

As a corollary to our main result Theorem~\ref{mt} we get that, for any
fixed~$p$ and every nice family~$\MF$ of graphs, we have
$\lambda^l_{p,1}(G)\le(1+o(1))\frac32\Delta(G)$ for $G\in\MF$. Taking an
$L(\rup{p/k},\rup{q/k})$-labelling and multiplying each label by~$k$, for
some positive integer~$k$, we obtain an $L(p,q)$-labelling. This gives the
following corollary.

\begin{corollary}\mbox{}\\*
  Let~$\MF$ be a nice family of graphs and let $p\ge q$ be positive
  integers. Then for graphs~$G$ in~$\MF$ we have
  $\lambda_{p,q}(G)\le(1+o(1))\frac32q\Delta(G)$.
\end{corollary}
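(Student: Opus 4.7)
The plan is to derive the corollary by chaining together Theorem~\ref{mt} and Proposition~\ref{easy}, with the only non-trivial ingredient being the observation flagged in the sentence preceding the statement: for fixed~$p$ one can upgrade the conclusion of Theorem~\ref{mt} to an $L(p,1)$-labelling statement.

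First I would establish the intermediate claim that, for every fixed positive integer~$p$ and every nice family~$\MF$, we have $\lambda^l_{p,1}(G)\le(1+o(1))\,\frac32\,\Delta(G)$ for $G\in\MF$. Fix $\epsilon>0$ and let $\Delta$ be large enough both to apply the precise form of Theorem~\ref{mt} (with that~$\epsilon$) and to satisfy $\Delta^{1/4}\ge p$; note that the second requirement is well defined exactly because~$p$ is a \emph{fixed} constant. Given any lists on $V(G)$ of size $\bigl\lceil(\frac32+\epsilon)\,\Delta\bigr\rceil$, Theorem~\ref{mt} produces a proper colouring of~$G^2$ from these lists in which every two vertices adjacent in~$G$ receive colours that differ by at least~$\Delta^{1/4}\ge p$. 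This is precisely a list $L(p,1)$-labelling of~$G$, which gives the claim.

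Second, I would feed this into Proposition~\ref{easy}. Given fixed positive integers~$p$ and~$q$, set
\[t\:=\:\Bigl\lceil{\frac{p-1}{q}}\Bigr\rceil+1,\]
which is again a fixed positive integer depending only on~$p$ and~$q$. Proposition~\ref{easy} gives
\[\lambda^l_{p,q}(G)\:\le\:q\cdot\lambda^l_{t,1}(G),\]
and applying the intermediate claim (with~$t$ playing the role of~$p$) to the right-hand side yields
\[\lambda^l_{p,q}(G)\:\le\:q\cdot(1+o(1))\,\tfrac32\,\Delta(G)\:=\:(1+o(1))\,\tfrac32\,q\,\Delta(G),\]
as required, with the $o(1)$ as $\Delta(G)\longrightarrow\infty$.

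There is no real obstacle here: once Theorem~\ref{mt} and Proposition~\ref{easy} are in hand, the only subtlety is to notice that the ``additional'' conclusion of Theorem~\ref{mt} about the gap~$\Delta^{1/4}$ between colours of adjacent vertices is exactly what converts a list colouring of~$G^2$ into a list $L(p,1)$-labelling, and that this conversion is legitimate precisely because~$p$ is fixed while~$\Delta$ tends to infinity. Everything else is book-keeping.
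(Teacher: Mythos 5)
Your proof matches the paper's own derivation exactly: the paper also first observes that the $\Delta^{1/4}$ separation guaranteed by Theorem~\ref{mt} gives $\lambda^l_{p,1}(G)\le(1+o(1))\,\frac32\,\Delta(G)$ for any fixed $p$ (since $\Delta^{1/4}\ge p$ eventually), and then applies Proposition~\ref{easy}. You have merely written out the short argument the paper leaves implicit, and it is correct.
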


\noindent
Note that the examples discussed earlier show that for each positive
integer~$q$ the factor $\frac32q$ is optimal.

%%%%%%%%%%%%%%%%%%%%%%%%%%%%%
%%%%%%%%%%%%%%%%%%%%%%%%%%%%%

\section{Nice Families of Graphs}\label{nice}

Recall that we call a family~$\MF$ of graphs \emph{nice} if (a) it is
closed under taking minors and (b) there is some $k$ for which $K_{3,k}$ is
not in the family. In this section we prove a number of properties of nice
families, eventually showing that we obtain an equivalent definition if we
replace condition~(b) by the following condition:

\smallskip\noindent
(c) \ there is a constant~$\beta_\MF$ such that for any graph $G\in\MF$ and
any vertex set $B\subseteq V(G)$, if we let $A$ be the set of vertices in
$V(G)\sm B$ which have at least three neighbours in~$B$, then the number of
edges between $A$ and $B$ is at most $\beta_\MF|B|$.

\smallskip
To prove this equivalence, we need the following result.

\begin{theorem}[Mader~\cite{Mad68}]\label{adbounded}\mbox{}\\*
  For any graph~$H$, there is a constant~$C_H$ such that every $H$-minor
  free graph has average degree at most~$C_H$.
\end{theorem}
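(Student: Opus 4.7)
The plan is to reduce to the case $H = K_n$, where $n = |V(H)|$. Since $H$ is a subgraph of $K_n$, every graph containing $K_n$ as a minor also contains $H$ as a minor. By contraposition, every $H$-minor free graph is $K_n$-minor free, so it suffices to bound the average degree of $K_n$-minor free graphs.

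Next, I would invoke the elementary observation that every graph of average degree at least $2\,d$ contains a non-empty subgraph of minimum degree at least $d$\,: iteratively delete a vertex of degree less than $d$, noting that each such deletion strictly increases the ratio $|E|/|V|$, and terminate when no such vertex remains. Thus the theorem further reduces to Mader's classical statement that there exists a function $d(n)$ such that every graph of minimum degree at least $d(n)$ contains $K_n$ as a minor. Taking $C_H = 2\,d(n)$ then yields the desired bound on the average degree.

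To establish the existence of $d(n)$, I would argue by induction on $n$, the cases $n \le 2$ being trivial. For the inductive step, suppose $G$ has minimum degree at least some suitably large $d(n)$. The idea is to construct $n$ pairwise-adjacent, pairwise-disjoint connected branch sets inside $G$\,: one starts from a vertex and grows branch sets greedily, periodically contracting a connected subgraph $X$ to form $G/X$ so that the remaining minimum degree stays large enough to invoke the inductive hypothesis on $G/X$. A careful bookkeeping argument gives the crude bound $d(n) \le 2^{n-2}$, which is more than enough for our purposes; the sharp bound of order $n\,\sqrt{\log n}$ due to Kostochka and Thomason would require much more work but is not needed here.

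The hard part will be the inductive construction in the last step\,: choosing and growing branch sets that simultaneously stay connected, remain pairwise joined by an edge, and do not collectively absorb too many vertices (which would destroy the density hypothesis needed for the next induction level). This is the technically delicate portion of Mader's original argument; by contrast, the two earlier reductions (from $H$ to $K_n$, and from average degree to minimum degree) are entirely routine.
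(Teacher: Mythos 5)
The paper does not prove this statement at all: it is quoted as Mader's theorem with a citation to~\cite{Mad68}, so there is no internal proof to compare against. Judged on its own terms, your two preliminary reductions are correct and, as you say, routine: $H\subseteq K_n$ gives that $H$-minor-free graphs are $K_n$-minor-free, and the vertex-deletion argument correctly extracts from any graph of average degree at least $2\,d$ a non-empty subgraph of minimum degree at least~$d$ (deleting a vertex of degree less than $d$ from a graph with $|E|\ge d\,|V|$ does not decrease the ratio $|E|/|V|$).

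The genuine gap is that everything of substance is in the step you defer. The assertion ``graphs of minimum degree at least $d(n)$ contain a $K_n$-minor, with $d(n)\le 2^{n-2}$'' \emph{is} Mader's theorem; the description of ``growing branch sets greedily, periodically contracting a connected subgraph'' is too vague to verify and is not how the short standard argument runs, so as written the proposal proves nothing beyond the trivial reductions. The clean inductive argument (which in fact makes your average-to-minimum-degree reduction unnecessary) is: to show that average degree at least $2^{n-2}$ forces a $K_n$-minor, induct on $n$, and for $n\ge3$ take a minor $H'$ of $G$ that is minimal subject to $|E(H')|\ge2^{n-3}\,|V(H')|$. Minimality under contracting any edge $xy$ of $H'$ forces $x$ and $y$ to have at least $2^{n-3}$ common neighbours (otherwise the contraction would lose one vertex but fewer than $2^{n-3}+1$ edges, preserving the density bound). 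Hence for any vertex $x$ of $H'$ the induced subgraph on its neighbourhood has minimum, and so average, degree at least $2^{(n-1)-2}$; by induction it contains a $K_{n-1}$-minor, and adding $x$ as one further branch set yields a $K_n$-minor of $H'$ and hence of $G$. You should either supply this argument (or a correct version of the branch-set construction) or, as the paper does, simply cite Mader.
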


\noindent
In the proof of Theorem~\ref{adbounded}, Mader showed that $C_H\le
c|V(H)|\log|V(H)|$, for some constant~$c$. This upper bound was later
lowered independently by Kostochka~\cite{Kos84} and Thomason~\cite{Tho84}
to $C_H\le c'|V(H)|\sqrt{\log|V(H)|}$, for some constant~$c'$.

\begin{corollary}\label{fewtriangles}\mbox{}\\*
  Any~$H$-minor-free graph with~$n$ vertices has at most
  $\displaystyle\binom{\lfloor C_H\rfloor}{2}\cdot n$ triangles.
\end{corollary}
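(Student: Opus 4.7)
The plan is to use the standard degeneracy argument that follows from a bound on average degree. First I would observe that the class of $H$-minor free graphs is closed under taking subgraphs, since deleting a vertex or edge cannot create a new minor. Consequently, applying Theorem~\ref{adbounded} not to $G$ itself but to every induced subgraph, every such subgraph of $G$ has average degree at most $C_H$, and therefore contains some vertex whose degree (\,in that subgraph\,) is at most $C_H$.

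Next I would peel off the graph from this minimum degree vertex. Pick $v_n$ to be a vertex of minimum degree in $G$; pick $v_{n-1}$ to be a vertex of minimum degree in $G-v_n$; and so on, producing an ordering $v_1,v_2,\ldots,v_n$ of $V(G)$. By the previous observation, each $v_i$ has at most $C_H$ neighbours among $\{v_1,\ldots,v_{i-1}\}$, the vertices that were still present when $v_i$ was chosen.

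Finally I would count each triangle by its \emph{last} vertex in this ordering. A triangle $\{v_i,v_j,v_k\}$ with $i<j<k$ is determined by $v_k$ together with the pair $\{v_i,v_j\}$ of earlier neighbours of $v_k$, so the number of triangles having $v_k$ as last vertex is at most $\binom{d_k^-}{2}$, where $d_k^-\le C_H$ is the back-degree of $v_k$. Summing, the total number of triangles is at most $\sum_{k=1}^{n}\binom{d_k^-}{2}\le n\,\binom{C_H}{2}$, as required.

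There isn't really a hard step here; the only thing to be careful about is the closure of $H$-minor freeness under subgraphs (\,so that Mader's theorem can be applied to arbitrary subgraphs and we get a degeneracy bound, not merely an average degree bound\,). Once that is in hand, the triangle count follows immediately from the ordering-and-last-vertex trick.
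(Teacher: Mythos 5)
Your proof is correct and is essentially the paper's own argument: the paper proceeds by induction on $n$, removing a vertex of degree at most $C_H$ (\,which exists by Theorem~\ref{adbounded}, since $H$-minor freeness is inherited by subgraphs\,) and charging to it at most $\bigl(\genfrac{}{}{0pt}{0}{C_H}{2}\bigr)$ triangles, which is exactly your degeneracy ordering unrolled. No substantive difference.
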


\begin{proof}
  We prove the result by induction on~$n$, the result holding trivially if
  $n\le2$. Let~$G$ be an $H$-minor-free graph with~$n$ vertices. By
  Theorem~\ref{adbounded}, its average degree is at most~$C_H$. So~$G$ has
  a vertex~$v$ with degree at most~$\lfloor C_H\rfloor$. The vertex $v$ is
  in at most $\binom{\lfloor C_H\rfloor}{2}$ triangles. By induction, $G-v$
  has at most $\binom{\lfloor C_H\rfloor}{2}\cdot(n-1)$ triangles.
  Hence~$G$ has at most $\binom{\lfloor C_H\rfloor}{2}\cdot n$ triangles.
\end{proof}

\noindent
For an extension of this result see Lemma 2.1 of Norine \emph{et
  al}.~\cite{NSTW}.

\begin{theorem}\label{K3nfree}\mbox{}\\*
  A class $\MF$ of graphs is nice if and only if it is minor-closed and
  satisfies condition (c).
\end{theorem}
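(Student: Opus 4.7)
The plan is to prove the two implications.  For the ``only if'' direction, being minor-closed is built into the definition of \emph{nice}, so the only content is that $\MF$ cannot contain every~$K_{3,k}$.  I would show this by applying the niceness inequality to $G=K_{3,k}$ with~$B$ taken as the side of size~$3$: every vertex of the other side has three neighbors in~$B$ and so lies in~$A$, which gives $e(A,B)=3k$ while $|B|=3$; the inequality $e(A,B)\le\beta_\MF|B|$ then forces $k\le\beta_\MF$, which is impossible for all~$k$.  Hence $K_{3,k}\notin\MF$ for some $k$.

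For the converse, suppose $\MF$ is minor-closed and excludes some~$K_{3,k}$; by Mader's theorem (Theorem~\ref{adbounded}) every graph in~$\MF$ has average degree at most a constant $C:=C_{K_{3,k}}$.  Fix $G\in\MF$ and $B\subseteq V(G)$, and consider the bipartite subgraph~$H$ of~$G$ between~$A$ and~$B$; then $H\in\MF$, and each $v\in A$ satisfies $d_H(v)\ge3$, so $|A|\le e(A,B)/3$.  The key step would be to build a minor~$H'$ of~$G$ on vertex set~$B$ whose edge count is a constant fraction of~$e(A,B)$; Mader applied to~$H'$ would then yield $|E(H')|\le\tfrac12 C|B|$, and hence $e(A,B)=O_C(|B|)$.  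Concretely, suppose~$H$ admits a matching saturating~$A$\,---\,an injection $v\mapsto b_v\in N_H(v)$ for all $v\in A$.  Contracting each edge~$vb_v$ yields a minor~$H'$ on~$B$ in which each~$v$ contributes $d_H(v)-1$ edges incident to~$b_v$; two distinct $v,v'\in A$ can generate the same edge of~$H'$ only when $b_v\in N_H(v')$ and $b_{v'}\in N_H(v)$, so every edge of~$H'$ is born of at most two vertices of~$A$.  Hence $|E(H')|\ge\tfrac12\bigl(e(A,B)-|A|\bigr)\ge\tfrac13\,e(A,B)$, and Mader on~$H'$ gives $e(A,B)\le\tfrac{3C}{2}|B|$, so $\beta_\MF=\tfrac{3C}{2}$ works whenever the matching exists.

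The main obstacle is the case in which no $A$-saturating matching exists in~$H$: by Hall's theorem this means that some $S\subseteq A$ satisfies $|N_H(S)\cap B|<|S|$.  I would handle this by strong induction on~$|V(G)|$: using König's theorem, choose a minimum vertex cover $U=U_A\cup U_B$ of~$H$ (with $U_A\subseteq A$, $U_B\subseteq B$); the matching saturates~$U_A$, so the contraction argument still yields $e(U_A,B)\le\tfrac{3C}{2}|B|$, while the remaining edges of~$H$ lie in the bipartite subgraph $H[A\sm U_A,U_B]\in\MF$, which has strictly fewer vertices and whose $A$-side retains minimum degree~$3$, and to which the inductive hypothesis applies.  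The delicate point\,---\,and the main obstacle\,---\,is verifying that these two bounds combine into a single absolute constant $\beta_\MF$ depending only on~$C$ rather than accumulating over the depth of the recursion; this likely requires a slightly strengthened inductive statement, or a sharper use of $K_{3,k}$-minor-freeness to control the size of deficient sets so that the vertex cover reduces the problem by a constant factor at each step.
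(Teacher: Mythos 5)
Your ``only if'' direction coincides with the paper's and is correct. The ``if'' direction, however, has a genuine gap, and it sits exactly where you flagged it: the absence of an $A$-saturating matching is not a corner case but the generic situation. Even for planar graphs one can have $|A|$ close to $2\,|B|$ --- take $B$ to be the vertex set of a planar triangulation and $A$ a set of new vertices, one inside each face, joined to the three corners of that face; then every vertex of $A$ has exactly three neighbours in $B$ and $|A|=2\,|B|-4$ --- so Hall's condition fails badly and your main contraction argument never applies. The K\"onig/induction fallback does not close: it yields $e(A,B)\le\frac{3C}{2}\,|B|+\beta\,|U_B|$, and since the minimum vertex cover may be $U_B=B$ (in which case $U_A=\varnothing$ and no progress is made at all), the constant accumulates instead of stabilising, as you suspected. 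Relaxing to a contraction of each $v\in A$ onto an \emph{arbitrary} neighbour $b_v$ (not necessarily distinct) still produces a minor of $G$ on $B$ with at most $\half\,C\,|B|$ edges, but then the lower bound $|E(H')|\ge\frac13\,e(A,B)$ is lost, because arbitrarily many vertices of $A$ may give rise to the same edge of $H'$.

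The missing idea, which is how the paper proceeds, is to bound $|A|$ \emph{first} rather than $e(A,B)$ directly, and to do so without requiring distinct representatives. Process the vertices of $A$ one at a time, and for each one add at most one edge to an auxiliary graph $H$ on $B$, joining two of its neighbours that are not yet adjacent in $H$; each added edge uses its own private vertex of $A$ as connector, so $H$ is a minor of $G$ and hence $|E(H)|\le\half\,C_k\,|B|$ bounds the number $|A'|$ of contributing vertices. A vertex of $A$ that contributes nothing has its $B$-neighbourhood already a clique in $H$, so it sits on a triangle of $H$; since $k$ such vertices on one triangle would yield a $K_{3,k}$-minor in $G$, their number is at most $(k-1)$ times the number of triangles of $H$, which is $O(|B|)$ by Corollary~\ref{fewtriangles}. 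This gives $|A|=O(|B|)$, and only then does one apply Theorem~\ref{adbounded} to $G[A\cup B]$ to get $e(A,B)\le\half\,C_k\,(|A|+|B|)=O(|B|)$. Your write-up correctly identifies that something extra is needed in the deficient case, but does not supply it, so as it stands the proof is incomplete.
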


\begin{proof}
  First suppose that (c) holds for $\MF$. By taking~$B$ the set of three
  vertices in~$K_{3,k}$ from one part of the bipartition, and~$A$ the
  remaining~$k$ vertices, we see that $K_{3,k}$ cannot be in $\MF$ for
  $k>\beta_\MF$. It follows that every graph in~$\MF$ is
  $K_{3,k}$-minor-free if $k>\beta_\MF$.

  Next suppose that $\MF$ is a minor-closed family not containing $K_{3,k}$
  for some $k$. We want to prove that (c) holds for~$\MF$. Note that by
  Theorem~\ref{adbounded}, the average degree of a $K_{3,k}$-minor-free
  graph is bounded by some constant~$C_k$.

  Let $G\in\MF$, let $B$ be a set of vertices of~$G$, and let~$A$ be the
  set of vertices in $V\sm B$ having at least three neighbours in~$B$.
  Construct a graph~$H$ with vertex set~$B$ as follows: For each vertex
  of~$A$, one after another, if two of its neighbours in~$B$ are not yet
  adjacent in~$H$, choose a pair of those non-adjacent neighbours and add
  an edge between them.

  Let $A'\subseteq A$ be the set of vertices for which an edge has been
  added to~$H$, and set $A''=A\sm A'$. Then~$H$ is $K_{3,k}$-minor-free
  because~$G$ was, and hence $|A'|=|E(H)|\le\half C_k|B|$. Now for every
  vertex $a\in A''$, the neighbours of~$a$ in~$B$ form a clique in~$H$
  (otherwise we would have used~$a$ to link two of its neighbours in~$B$).
  Moreover, $k$ vertices of~$A''$ may not be complete to (that is, adjacent
  to each vertex of) the same triangle of~$H$, since otherwise~$G$ would
  contain a $K_{3,k}$-minor. Hence~$|A''|$ is at most $k-1$ times the
  number of triangles in~$H$, which is at most
  $\binom{\lfloor C_H\rfloor}{2}|B|$ by Corollary~\ref{fewtriangles}. We
  find that $|A''|\le(k-1)\binom{\lfloor C_H\rfloor}{2}|B|$, and hence
  $|A|=|A'|+|A''|\le \bigl(\half C_k+(k-1)\binom{\lfloor
    C_H\rfloor}{2}\bigr)|B|$.

  Since the subgraph of~$G$ induced on $A\cup B$ is $K_{3,k}$-minor-free,
  there are at most $\half C_k(|A|+|B|)$ edges between~$A$ and~$B$; that
  is, at most
  $\half C_k \bigl(\half C_k+
  (k-1)\binom{\lfloor C_H\rfloor}{2}+1\bigr)|B|$. So we are done with
  $\beta_\MF=
  \half C_k\bigl(\half C_k+(k-1)\binom{\lfloor C_H\rfloor}{2}+1\bigr)$.
\end{proof}

%%%%%%%%%%%%%%%%%%%%%%%%%%%%%
%%%%%%%%%%%%%%%%%%%%%%%%%%%%%

\section{Overview of the proof of Theorem~\ref{mt}}\label{sketch}

To prove Theorem~\ref{mt}, for a fixed nice family $\MF$, we need to show
that for every $\eps>0$ there is a~$\Delta_\eps$ such that for every
$\Delta\ge\Delta_\eps$ we have: for every graph $G\in\MF$ with maximum
degree at most~$\Delta$, given lists of size
\[\lstar\:=\: \lstar(\Delta,\eps)\:=\:
\bigl\lfloor\bigl(\tfrac32+\eps\bigr)\Delta\bigr\rfloor\]
for each vertex~$v$ of~$G$, we can find the desired colouring.

Given a graph $G$ with vertex set $V$, and $R\subseteq V$, we write
$G\!-\!R$ for the graph obtained from~$G$ by deleting the vertices in $R$
(and any incident edges), and write $G\!-\!v$ for $G\!-\!\{v\}$. Similarly,
we may write $V\sm v$ for $V\sm\{v\}$.

We proceed by induction on the number of vertices of~$G$. Our proof is a
recursive algorithm. In each iteration, we split off a set~$R$ of vertices
of the graph which are easy to handle, recursively colour $G^2\!-\!R$
(which we can do by the induction hypothesis), and then extend this
colouring to the vertices of~$R$. In extending the colouring, we must
ensure that no vertex~$v$ in~$R$ receives a colour which is either used on
a vertex in $V\sm R$ which is adjacent to~$v$ in~$G^2$ or is too close to a
colour on a vertex in $V\sm R$ which is adjacent to~$v$ in~$G$. Thus, we
modify the list~$L(v)$ of colours available for~$v$ by deleting those which
are forbidden because of such neighbours.

We note that $(G\!-\!R)^2$ need not be equal to $G^2\!-\!R$, as there may
be non-adjacent vertices of $G\!-\!R$ with a common neighbour in~$R$ but no
common neighbour in $G\!-\!R$. When choosing~$R$, we need to ensure that we
can construct a graph~$G_1$ in~$\MF$ on $V\sm R$ such that $G^2\!-\!R$ is a
subgraph of $G_1^2$. We also need to ensure that the connections
between~$R$ and $V\sm R$ are limited, so that the modified lists used when
list colouring the induced subgraph~$G^2[R]$ are still reasonably large.
Finally, we will want~$G^2[R]$ to have a simple structure so that we can
prove that we can list colour it as desired.

We begin with a simple example of such a set~$R$. We say a vertex~$v$
of~$G$ is \emph{removable} if it has at most~$\Delta^{1/4}$ neighbours
in~$G$ and at most two neighbours in~$G$ which have degree at
least~$\Delta^{1/4}$. We note that if~$v$ is a (removable) vertex with at
most one neighbour, then $(G\!-\!v)^2$ is $G^2[V\sm v]$, while if $v$ has
exactly two neighbours~$x$ and~$y$, then forming $G_1$ from $G\!-\!v$ by
adding an edge between~$x$ and~$y$ if they are not already adjacent, we
have that~$G_1$ is in~$\MF$ and $G^2\!-\!v$ is a subgraph of $G_1^2$. On
the other hand, if~$v$ is a removable vertex with at least three
neighbours, then it must have a neighbour~$w$ with degree at
most~$\Delta^{1/4}$. In this case, the graph~$G_2$ obtained from $G\!-\!v$
by adding an edge from~$w$ to every other neighbour of~$v$ in~$G$ is a
graph with maximum degree at most~$\Delta$ such that $G^2\!-\!v$ is a
subgraph of $G_2^2$. Furthermore, $G_2\in\MF$ as it is obtained from~$G$ by
contracting the edge~$wv$.

Thus, for any removable vertex~$v$, we can recursively list colour
$G^2\!-\!v$ using our algorithm. If, in addition, $v$ has at most
$\lstar-1-2\Delta^{1/2}$ neighbours in~$G^2$, then there will be a colour
in~$L(v)$ which appears on no vertex adjacent to~$v$ in~$G^2$ and is not
within~$\Delta^{1/4}$ of any colour assigned to a neighbour of~$v$ in~$G$.
To complete the colouring we give~$v$ any such colour.

The above remarks show that no minimal counterexample to our theorem can
contain a removable vertex of low degree in $G^2$. We are about to describe
another, more complicated, reduction we will use. It relies on the
following easy result.

\begin{lemma}\label{lemrem}\mbox{}\\*
  If~$R$ is a set of removable vertices of~$G$, then there is a graph
  $G_1\in\MF$ with vertex set $V\sm R$ and maximum degree at most~$\Delta$
  such that $G^2\!-\!R$ is a subgraph of $G_1^2$.
\end{lemma}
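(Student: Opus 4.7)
I would proceed by induction on $|R|$, the base case $|R|=0$ being settled by $G_1=G$. For the inductive step with $|R|\ge1$, the plan is to pick a vertex $v\in R$ and apply to it the single-vertex transformation spelled out in the paragraphs preceding the lemma: if $d_G(v)\le1$ then delete~$v$; if $d_G(v)=2$ with neighbours $x$ and~$y$ then replace~$v$ by the edge~$xy$; and if $d_G(v)\ge3$ then, since $v$ is removable, there exists a neighbour $w$ with $d_G(w)\le\Delta^{1/4}$, and we contract the edge $vw$. The discussion before the lemma already verifies that in each case the resulting graph $G'$ lies in $\MF$ (each operation is a minor and $\MF$ is minor-closed), has maximum degree at most $\Delta$ (in the contraction case the merged vertex has degree at most $2\,\Delta^{1/4}-2\le\Delta$ for $\Delta$ large enough), and satisfies $G^2-v\subseteq(G')^2$ (every length-two path through $v$ is recovered by the new edges at $w$ or between $x$ and $y$).

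Applying the induction hypothesis to $G'$ with $R'=R\setminus\{v\}$ then yields a graph $G_1\in\MF$ on $V-R$ with $\Delta(G_1)\le\Delta$ and $(G')^2-R'\subseteq G_1^2$, and the chain
\[G^2-R\:=\:(G^2-v)-R'\:\subseteq\:(G')^2-R'\:\subseteq\:G_1^2\]
completes the inductive step.

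The point that requires care --- and that I expect to be the main obstacle --- is verifying that $R'$ really is a set of removable vertices of $G'$, so that the induction hypothesis may be invoked. The bound $d_{G'}(u)\le\Delta^{1/4}$ for each $u\in R'$ is immediate, since the three operations can only reduce the degree of any vertex other than the merged endpoint. The harder requirement --- that $u$ has at most two $G'$-neighbours of $G'$-degree at least $\Delta^{1/4}$ --- can be threatened in the contraction case: the merged vertex $w$ may end up with $G'$-degree as large as $2\,\Delta^{1/4}-2\ge\Delta^{1/4}$, so for every $u\in R$ adjacent to $v$ in $G$, this $w$ appears as a potentially new high-degree neighbour of $u$ in $G'$, on top of the at most two high-degree neighbours $u$ already had in~$G$. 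I would resolve this by choosing the vertex $v\in R$ (and, when contraction is needed, its partner $w$) with care: first handle any $v\in R$ having $d_G(v)\le2$, so that no contraction is performed and no new high-degree vertex is created; otherwise prefer a contraction partner $w\notin R$, and fall back on $w\in R$ only when every low-$G$-degree neighbour of $v$ lies in $R$ itself, in which case the merged vertex can be processed immediately in the following step, possibly under a mildly relaxed version of the case $d\ge3$ that still respects the maximum-degree bound.
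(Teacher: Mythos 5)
Your induction runs into exactly the obstacle you flag at the end, and the fix you sketch does not close it. After contracting $v$ onto $w$, the merged vertex can have $G'$-degree as large as $2\,\Delta^{1/4}-2\ge\Delta^{1/4}$; any $u\in R'$ that is adjacent to $w$ and already had two neighbours of degree at least $\Delta^{1/4}$ in $G$ then has three high-degree neighbours in $G'$, so $R'$ is not a set of removable vertices of $G'$ and the induction hypothesis cannot be invoked. Processing the degree-at-most-two vertices of $R$ first, or preferring a contraction partner $w\notin R$, does not prevent this configuration: the damage is done to third parties $u$ adjacent to $w$, irrespective of whether $w$ itself lies in $R$, and the vague appeal to ``a mildly relaxed version of the case $d\ge3$'' is not a proof. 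To repair the argument along your lines you would have to strengthen the inductive statement --- for instance, measure removability always with respect to the original graph $G$ and carry the invariant that a contraction target accumulates degree at most $\Delta^{1/2}$ --- and you do not do this.

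The paper avoids the difficulty by performing all the operations simultaneously rather than iteratively: for each $v\in R$ with at least three neighbours in $V-R$ it fixes in advance a neighbour of $G$-degree less than $\Delta^{1/4}$ onto which $v$ will be contracted, and for each $v\in R$ with exactly two neighbours in $V-R$ it joins those two neighbours. Removability is then only ever tested in the original graph $G$, and the degree bound is verified once at the end: a vertex $x$ of $G_1$ has degree at most $\max\bigl(\Delta^{1/2},d_G(x)\bigr)$, because only a contraction target (which has fewer than $\Delta^{1/4}$ neighbours in $G$, hence receives fewer than $\Delta^{1/4}$ contractions) can gain degree, and each contraction contributes at most $\Delta^{1/4}$ new edges. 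Your single-vertex reductions are the correct local moves --- they are the ones described in the paragraphs preceding the lemma --- but the one-shot formulation is what makes the removability hypothesis usable, and without it your proof has a genuine gap.
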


\begin{proof}
  For each $v\in R$ with at least three neighbours in $V\sm R$, choose one
  of these neighbours with degree less than~$\Delta^{1/4}$ onto which we
  will contract~$v$. Add an edge between the two neighbours of any vertex
  in~$R$ with exactly two neighbours in $V\sm R$ (if they are not already
  adjacent). The degree of a vertex~$x$ in the resultant graph~$G_1$ is at
  most $\max\{\Delta^{1/2},d_G(x)\}$.
\end{proof}

\noindent
For any multigraph~$H$, we let~$H^*$ be the graph obtained from~$H$ by
subdividing each edge exactly once. For each edge~$e$ of~$H$, we let~$e^*$
be the vertex of~$H^*$ which we placed in the middle of~$e$ and we
let~$E^*$ be the set of all such vertices. We call this set of vertices
corresponding to the edges of~$H$ the \emph{core of~$H^*$}.

A \emph{removable copy of~$H^*$} is a subgraph of~$G$ isomorphic to~$H^*$
such that the vertices of~$G$ corresponding to the vertices of the core
of~$H^*$ are removable, and each vertex of~$H^*$ corresponding to a vertex
of~$H$ (i.e.\ not in the core) has degree at least~$\Delta^{1/4}$ (in $G$).
It follows that if $v^*,w^*,e^*$ are vertices of $G$ corresponding to the
vertices $v,w$ and edge $e=vw$ of $H$, then $e^*$ and all its neighbours
other than $v^*$ and $w^*$ have degree at most $\Delta^{1/4}$.

Note that the subgraph~$J$ of~$G^2$ induced by the core of some copy
of~$H^*$ in~$G$ contains a subgraph isomorphic to~$L(H)$, the line graph
of~$H$. So the list chromatic number of~$J$ is at least the list chromatic
number of~$L(H)$. If the copy of $H^*$ is removable, then removing the
edges of this copy of~$L(H)$ from~$J$ yields a graph $J'$ which has as
vertex set the core of $H^*$, and has maximum degree at
most~$\Delta^{1/2}$. (To see this, note that if $v^*,w^*,e^*$ are as above,
and we denote by $\tilde{N}(e^*)$ the set of neighbours in $G$ of $e^*$
other than $v^*$ and $w^*$, then $|\tilde{N}(e^*)|\le\Delta^{1/4}$, each
vertex in $\tilde{N}(e^*)$ has degree at most $\Delta^{1/4}$ in $G$, and
each neighbour in $J'$ of $e^*$ is in $\tilde{N}(e^*)$ or is a neighbour in
$G$ of a vertex in $\tilde{N}(e^*)$.) Thus, the key to list colouring~$J$
will be to list colour~$L(H)$. Fortunately, list colouring line graphs is
much easier than list colouring arbitrary graphs (see e.g.\
\cite{Kahn00,Kay01,MoRe02}). In particular, using a sophisticated argument
due to Kahn~\cite{Kahn00}, we can prove the following lemma which specifies
certain sets of removable vertices which we can use to perform reductions.

Given a multigraph $H$ and sets $U$ and $W$ of vertices in $H$, we let
$e_H(U,W)$ denote the number of edges between $U$ and $W$, with any edge
between two vertices in $U \cap W$ counting twice. If the graph $H$ is
clear from the context, we may write just $e(U,W)$.

\begin{lemma}\label{lemnotover2}\mbox{}\\*
  For any $\eps>0$, there exists $\Delta_\eps$ such that the following
  holds for every graph $G$ with $\Delta=\Delta(G)\ge\Delta_\eps$.
  Suppose~$R$ is the core of a removable copy of~$H^*$ in~$G$, for some
  multigraph~$H$, such that for any set~$X$ of vertices of~$H$ and
  corresponding set~$X^*$ of vertices of the copy of~$H^*$,
  \[\sum_{x \in X^*}d_{G-R}(x)\:\le\:
  e_H(X,V(H)\sm X)+\tfrac1{30}\eps|X|\Delta.\]
  Then, given lists of size $\lstar$ for every vertex, any acceptable
  colouring on $G^2\!-\!R$ can be extended to an acceptable colouring
  of~$G^2$.
\end{lemma}

\noindent
The following lemma shows that we will indeed be able to find a removable
set of vertices which we can use to perform a reduction.

\begin{lemma}\label{lemeuler}\mbox{}\\*
  For any $\eps>0$, there exists~$\Delta_\eps$ such that every graph
  $G\in\MF$ with maximum degree at most $\Delta\ge\Delta_\eps$ contains at
  least one of the following:

  \smallskip
  \qitee{(a)}a removable vertex~$v$ which has degree less than
  $\frac32\Delta+\Delta^{1/2}$ in~$G^2$, or

  \smallskip
  \qitee{(b)}a removable copy of~$H^*$ with core~$R$, for some
  multigraph~$H$ which contains an edge and is such that for any set~$X$ of
  vertices of~$H$ and corresponding set $X^*$ of vertices of $H$,
  \[\sum_{x \in X^*}d_{G-R}(x)\:\le\:
  e_H(X,V(H)\sm X)+|X|\,\Delta^{9/10}.\]
\end{lemma}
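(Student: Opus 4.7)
The plan is to assume (A) fails and construct a multigraph $H$ witnessing (B). Call a vertex of $G$ \emph{heavy} if $\deg_G(v)\ge\Delta^{1/4}$ and \emph{light} otherwise, write $\mathcal{H}$ for the set of heavy vertices, and set $n=|V(G)|$.

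\emph{Step 1.}\ Under failure of (A), every removable vertex has exactly two heavy neighbours. A removable $v$ has at most $\Delta^{1/4}$ neighbours, at most two of which are heavy, so $\deg_{G^2}(v)\le\deg_G(v)+\sum_{w\sim v}(\deg_G(w)-1)$; if $v$ had at most one heavy neighbour this bound would be $\Delta+O(\Delta^{1/2})<\tfrac32\Delta+\Delta^{1/2}$, contradicting failure of (A). This lets me define the multigraph $H_0$ on vertex set $\mathcal{H}$ with one edge $uw$ for each removable vertex whose two heavy neighbours are $u,w$; then $G$ contains a natural copy of $H_0^*$ with core $R_0$ equal to the set of all removable vertices.

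\emph{Step 2.}\ Theorem~\ref{K3nfree} together with Mader's theorem gives $G$ average degree at most a constant $C_\MF$, so $|\mathcal{H}|\le2|E(G)|/\Delta^{1/4}=O(n/\Delta^{1/4})$ and $|E(G[\mathcal{H}])|=O(n/\Delta^{1/4})$. Niceness applied with $B=\mathcal{H}$ bounds the set $A$ of light vertices with at least three heavy neighbours by $|A|\le\beta_\MF|\mathcal{H}|/3=O(n/\Delta^{1/4})$, so (using Step 1 to rule out removable vertices with $\le1$ heavy neighbour) $|R_0|=n-|\mathcal{H}|-|A|=n(1-o(1))$ and $|E(H_0)|=|R_0|=\Theta(n)$. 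Introducing the slack
\[\Phi(H):=\sum_{x\in V(H)}\bigl(\deg_G(x)-\deg_H(x)\bigr),\]
a direct bookkeeping gives $\Phi(H_0)=2|E(G[\mathcal{H}])|+e(A,\mathcal{H})=O(n/\Delta^{1/4})$.

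\emph{Step 3.}\ Starting from $H=H_0$, $R=R_0$, I iteratively delete any non-empty $X\subseteq V(H)$ violating the (B)-inequality, together with its incident $H$-edges and the corresponding core vertices. Using $\deg_{G-R'}(y)-\deg_{G-R}(y)=e_H(y,X)$ for a survivor $y$, a direct calculation gives $\Phi(H)-\Phi(H')=\sum_{x\in X}\deg_{G-R}(x)-e_H(X,\overline X)$, which under the violation exceeds $|X|\Delta^{9/10}$. Since $\Phi\ge0$ and $\Phi(H_0)=O(n/\Delta^{1/4})$, the total number of vertices ever deleted is $O(n/\Delta^{23/20})$, and the total number of edges of $H_0$ removed is at most $\Delta$ times this, i.e.\ $O(n/\Delta^{3/20})=o(n)$ as $\Delta\to\infty$. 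Consequently the terminal $H$ satisfies $|E(H)|\ge|E(H_0)|-o(n)=\Theta(n)\ge1$ and by construction meets the required inequality for every $X$.

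The main obstacle is the Step 3 bookkeeping: deleting $X$ shrinks $R$, which raises $\deg_{G-R}$ on survivors and could in principle create new violations. The exact cancellation $\deg_{G-R'}(y)-\deg_{G-R}(y)=e_H(y,X)=\deg_H(y)-\deg_{H'}(y)$ is what makes $\Phi$ a clean monovariant whose single-step decrement dominates $|X|\Delta^{9/10}$, and the global niceness bound $\Phi(H_0)=O(n/\Delta^{1/4})$ is what prevents the procedure from destroying all edges; the correct choice of $\Delta_\varepsilon$ is whatever forces both $o(n)=O(n/\Delta^{3/20})<|E(H_0)|/2$ and the validity of the Step 1 strict inequality.
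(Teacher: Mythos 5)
Your proof is correct and, while it builds the same multigraph via the same iterative deletion of violating sets, it terminates by a genuinely cleaner argument than the paper's. The paper first pre-filters the potential vertex set of $H$: it restricts to vertices of degree at least $\tfrac12\Delta$ (showing that otherwise (A) holds via the ``sees a $B$-vertex of degree $<\tfrac12\Delta$'' case), further discards vertices seeing more than $\Delta^{7/8}$ non-removable vertices, and then at the end uses these lower bounds on degree and on $|V_i|$ to conclude $e(V_i,R_i)>0$. You instead run the deletion process directly on the multigraph $H_0$ spanned by \emph{all} heavy vertices, observe that $\Phi(H)=\sum_{v\in V(H)}d_{G-R}(v)$ (which equals the paper's $e(V_i,V\sm R_i)$) is a nonnegative potential with $\Phi(H_0)=O(n/\Delta^{1/4})$ by niceness, and that each deletion of a violating $X$ strictly decreases $\Phi$ by more than $|X|\,\Delta^{9/10}$; this immediately bounds the total number of deleted vertices by $O(n/\Delta^{23/20})$ and hence the number of deleted $H_0$-edges by $O(n/\Delta^{3/20})$, which is dwarfed by $|E(H_0)|=|R_0|=n(1-o(1))$. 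This edge-counting termination avoids the $\tfrac12\Delta$ degree refinement and the $S_0$ pre-filtering entirely, so the resulting proof is shorter and, in my view, more transparent. The single-step identity $\Phi(H)-\Phi(H')=\sum_{x\in X}d_{G-R}(x)-e_H(X,\ov X)$ is correct, and the crucial cancellation $d_{G-R'}(y)-d_{G-R}(y)=e_H(y,X)=d_H(y)-d_{H'}(y)$ is exactly what makes it a clean monovariant despite $R$ shrinking. One small cosmetic point: take \emph{heavy} to mean degree strictly greater than $\Delta^{1/4}$ (matching the paper's $B$) so that the heavy vertices and the removable vertices are disjoint and the bipartition underlying $H_0^*$ is unambiguous; this affects nothing substantive.
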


\vspace{-4mm}\noindent
Combining Lemmas \ref{lemrem}, \ref{lemnotover2}, and~\ref{lemeuler} and
our observations on removing a removable vertex, yields Theorem~\ref{mt}
(with $o(1)$ replaced by $\eps$), provided that we choose $\Delta$ large
enough so that $3\Delta^{1/2}+2\le\eps\Delta$ (since then
$\frac32\Delta+\Delta^{1/2}<\lstar-1-2\Delta^{1/2}$) and
$\Delta^{9/10}\le\frac1{30}\eps\Delta$.

Thus, we need only prove the last two of these lemmas. The proof of
Lemma~\ref{lemeuler} is given in the next section. The proof of
Lemma~\ref{lemnotover2} is much more complicated and forms the bulk of the
paper. We follow the approach developed by Kahn~\cite{Kahn00} for his proof
that the list chromatic index of a multigraph is asymptotically equal to
its fractional chromatic number. We need to modify the proof so it can
handle our situation in which we have a graph which is slightly more than a
line graph and in which we have lists with fewer colours than Kahn
permitted. We defer any further discussion to Section~\ref{secnotover}.

%%%%%%%%%%%%%%%%%%%%%%%%%%%%%
%%%%%%%%%%%%%%%%%%%%%%%%%%%%%

\section{Proof of Lemma~\ref{lemeuler}: Finding a
  Reduction}\label{seceuler}

In this section we prove Lemma~\ref{lemeuler}. Throughout the section we
assume that~$\MF$ is a nice family of graphs. Since there is a $k$ such
that no graph in $\MF$ contains $K_{3,k}$ as a minor,
Theorem~\ref{adbounded} implies every graph in~$\MF$ has average degree at
most~$C_\MF$ for some constant~$C_\MF$.

Let~$G$ be a graph in~$\MF$ with vertex set~$V$ and maximum degree at
most~$\Delta$, and let $n=|V|$. We let~$B$ be the set of vertices of degree
exceeding~$\Delta^{1/4}$. Since the average degree of $G$ is at
most~$C_\MF$, we have $|B|<\dfrac{C_\MF\,n}{\Delta^{1/4}}$. Hence, another
application of Theorem \ref{K3nfree} implies that~$G$ contains a set~$R_0$
of at least $n-O\bfracb{n}{\Delta^{1/4}}$ removable vertices. We note that
if a vertex in~$R_0$ is adjacent to a vertex in~$B$ with degree less
than~$\half \Delta$, or is adjacent to at most one vertex in $B$, then its
total degree in the square $G^2$ is less than $\frac32\Delta+\Delta^{1/2}$
and conclusion~(a) of Lemma~\ref{lemeuler} holds. So, we can assume this is
not the case.

We let~$V_0$ be the set of vertices of~$G$ which have degree at
least~$\half \Delta$. Note that $V_0\subseteq B\subseteq V\sm R_0$. Since
every vertex in~$R_0$ has exactly two neighbours in~$V_0$, the sum of the
degrees of the vertices in~$V_0$ is at least $2 |R_0|$. This gives
$|V_0|\ge \dfrac{2|R_0|}{\Delta}\ge
\dfrac{2n}\Delta-O\bfracb{n}{\Delta^{5/4}}$.

We let~$S_0$ be the set of vertices in~$V_0$ which are adjacent to more
than~$\Delta^{7/8}$ vertices of $V\sm R_0$. Since every subgraph of $G$ has
average degree at most $C_\MF$, the total number of edges within $V\sm R_0$
is $O\bfracb{n}{\Delta^{1/4}}$. This implies that
$|S_0|=O\bfracb{n}{\Delta^{9/8}}$. We set $V_1=V_0\sm S_0$ and note that
$|V_1|\ge \dfrac{2n}\Delta-O\bfracb{n}{\Delta^{9/8}}$. We can conclude that
\begin{equation}\label{eq1}
  |V_1|\:\ge\:\frac{n}\Delta\quad \text{for large enough~$\Delta$}.
\end{equation}

We let~$R_1$ be the set of vertices in~$R_0$ adjacent to (exactly) two
vertices in~$V_1$. So every vertex in $R_0\sm R_1$ has one or two
neighbours in~$S_0$. By our bound on the size of~$S_0$, this means
$|R_0\sm R_1|\le |S_0|\,\Delta=O\bfracb{n}{\Delta^{1/8}}$ and hence
$|R_1|=n-O\bfracb{n}{\Delta^{1/8}}$. By our choice of~$S_0$ we have that
$e(V_1,V\sm R_0)\le\Delta^{7/8}|V_1|$. (Throughout this proof, $e(U,W)$
means $e_G(U,W)$.) Since every vertex in $R_0\sm R_1$ has at most one
neighbour in~$V_1$, we have
$e(V_1,R_0\sm R_1)\le |R_0\sm R_1|= O\bfracb{n}{\Delta^{1/8}}\le
O(\Delta^{7/8})|V_1|$, where the final inequality uses~\eqref{eq1}. We
obtain
\begin{equation}\label{eq666}
  e(V_1,V\sm R_1)\:=\: e(V_1,V\sm R_0)+e(V_1,R_0\sm R_1)\:\le\:
  O(\Delta^{7/8})|V_1|.
\end{equation}
We let~$F_1$ be the bipartite graph formed by the edges between the
vertices of~$R_1$ and the vertices of~$V_1$. We remind the reader that each
vertex of~$R_1$ has degree two in this graph. We let~$H_1$ be the
multigraph with vertex set~$V_1$ from which~$F_1$ is obtained by
subdividing each edge exactly once (so $F_1$ is a copy of $H^*_1$).

We check if~$F_1$ is a removable copy of~$H^*_1$ as in~(b). The only reason
that it might not be is that there is some subset $Z_1\subseteq V_1$ of
vertices of~$H_1$ such that:
\[e(Z_1,V\sm R_1)\:=\: \sum_{v\in Z_1}d_{G-R_1}(v)\:>\:
e_{H_1}(Z_1,V_1\sm Z_1)+|Z_1| \Delta^{9/10}.\]
In this case, we set $V_2=V_1\sm Z_1$, let~$R_2$ be the set of vertices
in~$R_1$ with no neighbours in~$Z_1$, let~$F_2$ be the bipartite subgraph
of~$G$ formed by the edges between the vertices of~$R_2$ and the vertices
of~$V_2$, and let~$H_2$ be the multigraph on~$V_2$ from which~$F_2$ is
obtained by subdividing each edge exactly once.

Now we check if~$F_2$ is empty or a removable copy of $H^*_2$ as in~(b). If
not, we can proceed in the same fashion, deleting a set $Z_2$ of vertices
from~$V_2$ and a set of vertices from~$R_2$, to obtain $V_3$,~$R_3$ and a
new bipartite graph $F_3$ with parts $V_3$ and $R_3$ and corresponding
multigraph $H_3$. We continue this process until it stops. We have
constructed new sets $V_1,R_1,V_2,R_2,\dots,\dots V_i,R_i$ such that
letting $Z_j=R_j-R_{j+1}$ for $j=1,\ldots,i-1$ we have:
\begin{equation}\label{eq222222}
  e(Z_j,V\sm R_j)\:=\: \sum_{v \in Z_j}d_{G-R_j}(v)\:>\:
  e_{H_j}(Z_j,V_j\sm Z_j)+|Z_j|\Delta^{9/10}.
\end{equation}

We must show that $R_i\ne\varnothing$, since then the corresponding
multigraph $H_i$ has at least one edge and we are done. To this end, we
note that for each $j=1,\ldots,i-1$
\begin{equation}\label{eq22667}
  e(V_{j+1},V\sm R_{j+1})\:=\: e(V_j\sm Z_j,V\sm R_{j+1})\:=\:
  e(V_j,V\sm R_j)-e(Z_j,V\sm R_j)+e(V_{j+1},R_j\sm R_{j+1}).
\end{equation}
Furthermore, for every vertex in $R_j\sm R_{j+1}$ adjacent to a vertex
in~$V_{j+1}$, there also is a vertex in~$Z_j$ it is adjacent to. Hence
$e(V_{j+1},R_j\sm R_{j+1})$ is precisely the number of edges of~$H_j$ with
exactly one endpoint in~$Z_j$: $e(V_{j+1},R_j\sm
R_{j+1})=e_{H_j}(Z_j,V_j\sm Z_j)$. Now~\eqref{eq222222} and \eqref{eq22667}
give:
\[e(V_j,V\sm R_j)\:>\: e(V_{j+1},V\sm R_{j+1})+|Z_j|\Delta^{9/10}.\]

Let $Z'=\bigcup_{j=1}^{i-1}Z_j= V_1\sm V_i$. Summing the inequality above
over $j=1,\ldots,i-1$ yields:
$e(V_1,V\sm R_1)\ge e(V_i,V\sm R_i)+|Z'|\Delta^{9/10}$. Using
\eqref{eq666}, this implies
\[|Z'|\:\le\: \frac{e(V_1,V\sm R_1)-e(V_i,V\sm R_i)}{\Delta^{9/10}}\:\le\:
\frac{O(\Delta^{7/8})\,|V_1|}{\Delta^{9/10}}\:=\:
|V_1|\,O(\Delta^{-1/40}).\]
Hence $|V_i|\ge|V_1|(1-O(\Delta^{-1/40}))$, which also gives
\begin{equation}\label{eq670}
  |V_1|\:\le\: (1+O(\Delta^{-1/40}))|V_i|.
\end{equation}
Since $V_i\subseteq V_1$, it follows from \eqref{eq666} and \eqref{eq670}
that:
\[e(V_i,V\sm R_0)\:\le\: e(V_1,V\sm R_1)\:\le\: O(\Delta^{7/8})|V_1|\:\le\:
O(\Delta^{7/8})|V_i|.\]
Finally, for each edge between~$V_i$ and $R_1\sm R_i$, we have an edge
between $R_1\sm R_i$ and~$Z'$ as well. We find
\[e(V_i,R_1\sm R_i)\:\le\: |Z'|\Delta\:\le\: |V_1|O(\Delta^{39/40})\:\le\:
O(\Delta^{39/40})|V_i|.\]
Combining these estimates we obtain
\[e(V_i,V\sm R_i)\:=\:e(V_i,V\sm R_0)+e(V_i,R_0\sm R_1)+e(V_i,R_1\sm R_i)\:
\le\:O(\Delta^{39/40})|V_i|.\]
But $V_i\ne\varnothing$, and each vertex in~$V_i$ has degree at
least~$\half \Delta$. This means that $e(V_i,R_i)>0$ for large
enough~$\Delta$. In particular, it follows that~$R_i$ is non-empty. Thus,
$H_i$ contains an edge. We have shown that~(b) holds.

This completes the proof of  Lemma~\ref{lemeuler}.
\proofend

%%%%%%%%%%%%%%%%%%%%%%%%%%%%%
%%%%%%%%%%%%%%%%%%%%%%%%%%%%%

\section{Proof of Lemma \ref{lemnotover2}: Reducing using Line
  Graphs}\label{secnotover}

It remains to prove Lemma \ref{lemnotover2}, which we do in this section. A
removable core satisfying the hypotheses of that lemma corresponds to the
edge set of a multigraph $H$ with maximum degree~$\Delta$, with each vertex
of the core corresponding to a distinct edge. Having coloured $G^2\!-\!R$
for such a removable core $R$, colouring the induced subgraph $G^2[R]$
translates to finding a list colouring of the line graph of $H$ (where an
edge inherits the list assigned to the corresponding vertex of~$H$) so that
certain side conditions are satisfied. Firstly, a vertex of $R$
corresponding to an edge $e=vw$ of $G$ may not use (a) a colour assigned to
one of its neighbours in the square of~$G$ lying in $V-R$, (b) a colour
within $\Delta^{1/4}$ of the colours assigned to its neighbours in $G$
lying in $V\sm R$. Secondly, two vertices of $R$ cannot (c) use the same
colour if they have a common neighbour in $G$ which does not correspond to
a vertex of $H$, or (d) use colours within $\Delta^{1/4}$ if they are
adjacent in $G$.

To handle side constraints (a) and (b) we simply delete the forbidden
colours from the list for $e$. Now, because the vertex corresponding to $e$
is removable, it has at most $\Delta^{1/4}$ neighbours which are not $v$ or
$w$, and each such neighbour has degree at most $\Delta^{1/4}$. So even
after these deletions, the list for $e$ will have at least $\lstar_e$
elements, where
\begin{equation}\label{eqn.lstaredef}
  \lstar_e\:=\:
  \bigl\lceil\bigl(\tfrac32+\eps\bigr)\Delta-
  (\Delta-d_H(v))-(\Delta-d_H(w))-3\Delta^{1/2}\bigr\rceil.
\end{equation}

To handle side constraints (c) and (d) we use two auxiliary graphs $J_1$
and $J_2$. The removability of the vertices of $R$ ensures that these
graphs have bounded degree (in terms of $\Delta$). Thus, as we show later,
if $G$ is $\Delta$-regular then Lemma~\ref{lemnotover2} follows directly
from the following result.

\begin{lemma}\label{lemfec}\mbox{}\\*
  For every $0<\eps<\frac14$ there is a~$\Delta_\eps$ such that the
  following holds for all $\Delta\ge\Delta_\eps$. Let~$H$ be a multigraph
  with vertex set~$V$ and maximum degree at most~$\Delta$. For every edge
  $e$, let $L(e)$ be a list of colours. Let $J_1$ be a graph with vertex
  set~$E(H)$ and maximum degree at most~$\Delta^{1/2}$, and let~$J_2$ be a
  graph with vertex set~$E(H)$ and maximum degree at most~$\Delta^{1/4}$.
  Suppose that the following two conditions are satisfied.{

    \smallskip
    \qitee{(1)}For every edge~$e$: $|L(e)|=\lstar_e$.

    \smallskip
    \qitee{(2)}For every set~$X$ of an odd number of vertices of~$H$:
    \[\sum_{v\in X}(\Delta-d(v))-e(X,V\sm X)\:\le\:
    \tfrac1{30}\eps|X|\Delta.\]}%
  Then we can find an acceptable edge-colouring of $H$ such that any pair
  of edges of~$H$ joined by an edge of~$J_1$ receive different colours, and
  any pair of edges of~$H$ joined by an edge of~$J_2$ receive colours that
  differ by at least~$\Delta^{1/4}$.
\end{lemma}

\begin{remark}\label{rem.tildec1}\quad\rm
  Condition~(2) of Lemma~\ref{lemfec} applied to the set $X=\{v\}$ implies
  that for each vertex~$v$,
  $d(v)\ge\bigl(\half-\frac1{60}\eps\bigr)\Delta$. Hence, for each edge~$e$
  we have
  $\lstar_e\ge\half\Delta+\frac{29}{30}\eps\Delta-3\Delta^{1/2}>
  \half\Delta$ if~$\Delta$ is sufficiently large.
\end{remark}

\noindent
As we show at the end of this section, a relatively simple trick allows us
to reduce to the case when $G$ is regular. So the bulk of the work in the
section is in proving Lemma \ref{lemfec}: we complete this task at the end
of Subsection~\ref{subsec.modkahn}.

The way we prove Lemma \ref{lemfec} is by exploiting some beautiful work of
Kahn, developed to show that, letting $\chi'_f(H)$ be the fractional
chromatic index of $H$, we have that the list chromatic index of $H$ is
$(1+o(1)) \chi'_f(H)$. We will do two things: (i) explain why Lemma
\ref{lemfec} follows from Kahn's proof in the special case when~$J_1$ and
$J_2$ are empty (that is, $J_1$ and $J_2$ have no edges, and so are
irrelevant), and (ii) discuss the modifications needed to Kahn's proof to
deal with~$J_1$ and $J_2$.

Kahn's proof analyses an iterative procedure which in each iteration, for
each colour $\gamma$, randomly extends a matching in the spanning subgraph
$H_\gamma$ of $H$ whose edges are those on which $\gamma$ is available, to
progressively colour more and more of the edges of $H$. The first step of
his proof is to show that if each list has at least $(1+o(1))\chi'_f(H)$
elements, then there is a probability distribution on these matchings which
ensures that: (a)~for every edge $e$ and colour~$\gamma$ in $L(e)$, the
probability that $e$ is in the matching of colour $\gamma$ is
$|L(e)|^{-1}$, and (b) other desirable properties hold. The second step is
to show that for any family of lists for which there are probability
distributions satisfying (a) and (b), this iterative procedure yields a
colouring of~$E(H)$ where each edge gets a colour from its own list.

It is natural to state Kahn's result precisely, before discussing our
modification of it. Having done so, before delving into the details of
Kahn's proof, we will show that for lists satisfying the conditions of
Lemma~\ref{lemfec} there are probability distributions on the matchings
satisfying (a) and~(b) above. We can then apply Kahn's work, as a black
box, to prove Lemma~\ref{lemfec} in the special case when $J_1$ and $J_2$
are empty.

We then turn to strengthening the result so that it can deal with $J_1$ and
$J_2$. This has two parts. First we perform some straightforward
preprocessing which allows us to reserve some colours which can be used to
recolour vertices involved in conflicts caused by $J_1$ and $J_2$. Then we
impose additional constraints which provide, for each iteration, an upper
bound on the number of edges incident to each vertex which are involved in
such conflicts because of a colour they are assigned in that iteration.
Here we must get into the guts of Kahn's proof sufficiently, so as to be
able to explain the (relatively straightforward) additions to it which
allow us to do so. Then in a postprocessing phase, we recolour to eliminate
such conflicts using the colours we reserved in the first phase.

We will actually discuss this preprocessing and postprocessing first. We do
this in part because all of the rest of the discussion involves Kahn's
proof, while the pre/postprocessing does not, so it is natural to hive it
off; and in part because our discussion of the preprocessing introduces the
Lov\'asz Local Lemma, an important tool in Kahn's proof, in a simple
setting.

%%%%%%%%%%%%%%%%%%%%%%%%%%%%%

\subsection{Before and After}

Our preprocessing consists of applying the following lemma (which we prove
in this subsection).

\begin{lemma}\label{beforeandafter}\mbox{}\\*
  Suppose we are given a multigraph $H$ with maximum degree $\Delta$
  sufficiently large, satisfying the conditions of Lemma \ref{lemfec}. Then
  we can find, for every list $L(e)$, two disjoint sublists $L'(e)$
  and~$R(e)$ such that:

  \smallskip
  \qitee{(a)}no colour in $R(e)$ appears in $L'(f)$ for any edge $f$
  incident to $e$ in $H$;

  \smallskip
  \qitee{(b)}$|L'(e)|\ge|L(e)|-\frac23\eps\Delta$; and

  \smallskip
  \qitee{(c)}$|R(e)|\ge\Delta^{9/10}$.
\end{lemma}

\noindent
We then apply the following variant (to be proved later) of Lemma
\ref{lemfec} to the family $L'(e)$ of lists, using $\half\eps$ in place of
$\eps$. In this variant, condition~(2) is weakened by replacing 30 by 10,
and the conclusions are weakened by allowing some of the graph to remain
uncoloured. We can apply this amended lemma because of our bound on
$|L(e)-L'(e)|$ and the fact that conditions~(1) and~(2) of Lemma
\ref{lemfec} hold before the preprocessing.

\begin{lemma}\label{lemfec2}\mbox{}\\*
  For every $0<\eps<\frac14$ there is a~$\Delta_\eps$ such that the
  following holds for all $\Delta\ge\Delta_\eps$. Let~$H$ be a multigraph
  with vertex set~$V$ and maximum degree at most~$\Delta$. For every edge
  $e$ we are given a list~$L(e)$ of acceptable colours. Additionally, $J_1$
  is a graph with vertex set~$E(H)$ and maximum degree at
  most~$\Delta^{1/2}$ and~$J_2$ is a graph with vertex set~$E(H)$ and
  maximum degree at most~$\Delta^{1/4}$. Suppose that the following two
  conditions are satisfied.{

    \smallskip
    \qitee{(1)}For every edge~$e$: $|L(e)|=\lstar_e$.
    \smallskip

    \qitee{(2)}For every set~$X$ of an odd number of vertices of~$H$:
    \[\sum_{v\in X}(\Delta-d(v))-e(X,V\sm X)\:\le\:
    \tfrac1{10}\eps|X|\Delta.\]}%
  Then we can find an acceptable edge-colouring of $H$ such that by
  uncolouring a set of edges of~$H$, including at most
  $\frac13\Delta^{9/10}$ edges incident to any vertex $v$ of $H$, we obtain
  a partial colouring such that any two coloured edges joined by an edge
  of~$J_1$ receive different colours, and any two coloured edges joined by
  an edge of~$J_2$ receive colours that differ by at least~$\Delta^{1/4}$.
\end{lemma}

\begin{remark}\label{rem.lstare2}\quad\rm
  Much as in Remark~\ref{rem.tildec1}, condition~(2) in Lemma~\ref{lemfec2}
  gives $d(v)\ge\bigl(\half-\frac1{20}\eps\bigr)\Delta$, and so
  $\lstar_{e}>\half\Delta$ for each edge $e$, for sufficiently large
  $\Delta$.
\end{remark}

\noindent
To complete the proof of Lemma \ref{lemfec} (assuming
Lemmas~\ref{beforeandafter} and~\ref{lemfec2}), we uncolour edges as
specified in Lemma~\ref{lemfec2}, and then recolour each such edge $e$
using a colour from its reserve list~$R(e)$. By conclusion (a) of
Lemma~\ref{beforeandafter}, this colour cannot conflict with the colour of
any edge incident to~$e$ which was not uncoloured. So, in colouring $e$ we
must avoid any colour from~$R(e)$ assigned to an edge incident to it which
we have uncoloured (and re-coloured), avoid any colour assigned to a
neighbour in $J_1$, and avoid any colour within $\Delta^{1/4}$ of
neighbours in $J_2$. But in total there are at most
$3\Delta^{1/2}+\frac23\Delta^{9/10}$ colours to avoid, so if $\Delta $ is
large enough we can carry out the recolouring greedily.

So, to prove Lemma \ref{lemfec} it is enough to prove
Lemmas~\ref{beforeandafter} and~\ref{lemfec2}. In the remainder of this
section we prove Lemma~\ref{beforeandafter}; the proof of
Lemma~\ref{lemfec2} will not be complete until the end of
Section~\ref{subsec.modkahn}.

The key to the proof Lemma~\ref{beforeandafter} is the following general
lemma.

\begin{lemma}[Erd\H{o}s and Lov\'asz \cite{ErLo75}]{\bf (Local
    Lemma)}\label{lemlocal} \mbox{}\\*
  Suppose that~$\mathcal{B}$ is a set of (bad) events in a probability
  space~$\Omega$. Suppose further that there are~$p$ and~$d$ such that:

  \smallskip
  \qitee{(1)}for every event~$B$ in~$\mathcal{B}$, there is a
  subset~$\mathcal{S}_B$ of~$\mathcal{B}$ of size at most~$d$, such that
  the conditional probability of~$B$, given any conjunction of occurrences
  or non-occurrences of events in $\mathcal{B}\sm\mathcal{S}_B$, is at
  most~$p$, and

  \smallskip
  \qitee{(2)}$\rme pd<1$.

  \smallskip\noindent
  Then with positive probability, none of the events in~$\mathcal{B}$
  occur.
\end{lemma}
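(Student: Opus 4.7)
The plan is to prove the statement by the standard two-step induction argument due to Erd\H{o}s and Lov\'asz. The heart of the argument is the following inductive claim: for every bad event $B \in \mathcal{B}$ and every finite $\mathcal{T} \subseteq \mathcal{B} \sm \{B\}$ whose joint non-occurrence has positive probability,
\[\Pr\bigl[B \mid \bigcap\nolimits_{B' \in \mathcal{T}} \overline{B'}\bigr] \:\leq\: \rme\,p.\]
The base case $\mathcal{T} = \varnothing$ is immediate from $\Pr[B] \leq p$ and $\rme > 1$.

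For the inductive step I would split $\mathcal{T}$ into $\mathcal{T}_1 = \mathcal{T} \cap \mathcal{S}_B$ (with $|\mathcal{T}_1| \leq d$) and $\mathcal{T}_2 = \mathcal{T} \sm \mathcal{S}_B$, and expand the conditional probability as a ratio
\[\Pr\bigl[B \mid \bigcap\nolimits_{\mathcal{T}} \overline{B'}\bigr] \:=\: \frac{\Pr\bigl[B \cap \bigcap_{\mathcal{T}_1} \overline{B'} \mid \bigcap_{\mathcal{T}_2} \overline{B'}\bigr]}{\Pr\bigl[\bigcap_{\mathcal{T}_1} \overline{B'} \mid \bigcap_{\mathcal{T}_2} \overline{B'}\bigr]}.\]
The numerator is bounded above by $\Pr[B \mid \bigcap_{\mathcal{T}_2} \overline{B'}] \leq p$ by hypothesis~1, since the conditioning is entirely on events outside $\mathcal{S}_B$, exactly where the lopsided hypothesis gives the uniform bound~$p$. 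Writing $\mathcal{T}_1 = \{B_1, \dots, B_k\}$ with $k \leq d$, the denominator expands by the chain rule as $\prod_{i \leq k}\bigl(1 - \Pr[B_i \mid \bigcap_{j<i} \overline{B_j} \cap \bigcap_{\mathcal{T}_2} \overline{B'}]\bigr)$. Each factor has strictly fewer events in its conditioning than $|\mathcal{T}|$, so the inductive hypothesis bounds each conditional by $\rme\,p$, and the denominator is at least $(1-\rme\,p)^d$. The claim then reduces to the numerical inequality $p/(1-\rme\,p)^d \leq \rme\,p$, equivalently $(1-\rme\,p)^d \geq \rme^{-1}$, which follows from $\rme\,p\,d < 1$ via the elementary bound $(1-x)^d \geq \rme^{-xd/(1-x)}$.

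With the inductive claim in hand, the conclusion follows by one more application of the chain rule: enumerating the events in $\mathcal{B}$ as $B_1, B_2, \ldots$,
\[\Pr\Bigl[\bigcap_i \overline{B_i}\Bigr] \:=\: \prod_i \Pr\bigl[\overline{B_i} \mid \bigcap\nolimits_{j<i} \overline{B_j}\bigr] \:\geq\: \prod_i (1 - \rme\,p) \:>\: 0,\]
since $\rme\,p < 1/d \leq 1$; if $\mathcal{B}$ is infinite one passes to the limit over finite truncations using continuity of probability. The main obstacle is really the accounting in the inductive step: one must verify that the conditioning $\bigcap_{\mathcal{T}_2} \overline{B'}$ used in the numerator is genuinely ``a conjunction of non-occurrences of events in $\mathcal{B} \sm \mathcal{S}_B$'' in the sense of hypothesis~1, so that the uniform bound~$p$ really does apply. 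Once that observation is made, the rest is a clean double application of the chain rule and a one-line analytic inequality.
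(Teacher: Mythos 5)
The paper does not prove this lemma at all (it is quoted from Erd\H{o}s--Lov\'asz \cite{ErLo75}), so the only question is whether your argument is correct; it follows the standard inductive route, but there is a genuine numerical gap at the very last step. You bound the denominator by $(1-\rme\,p)^{|\mathcal{T}_1|}$ with $|\mathcal{T}_1|\le d$, and then need $p/(1-\rme\,p)^{d}\le\rme\,p$, i.e.\ $(1-\rme\,p)^{d}\ge\rme^{-1}$. Via your own inequality $(1-x)^d\ge\rme^{-xd/(1-x)}$ this requires $\rme\,p\,d/(1-\rme\,p)\le1$, i.e.\ $\rme\,p\,(d+1)\le1$, which is \emph{not} implied by the stated hypothesis $\rme\,p\,d<1$; indeed the inequality $(1-\rme\,p)^d\ge\rme^{-1}$ is simply false for, say, $d=1$ and $\rme\,p=0.9$, where the left side is $0.1<\rme^{-1}$. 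So as written the induction does not close under the lemma's hypothesis.

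The missing observation is that hypothesis~1 forces $B\in\mathcal{S}_B$ for every event $B$ with $\Pr(B)>0$: the hypothesis allows conditioning on \emph{occurrences} as well as non-occurrences of events outside $\mathcal{S}_B$, and if $B\notin\mathcal{S}_B$ one could condition on the occurrence of $B$ itself, giving conditional probability $1>p$ (note $p<1/(\rme\,d)<1$). Hence $\mathcal{T}_1\subseteq\mathcal{S}_B\sm\{B\}$ has size at most $d-1$, the denominator is at least $(1-\rme\,p)^{d-1}$, and now your elementary bound does the job: with $x=\rme\,p<1/d$ one has $x\,(d-1)/(1-x)<\bigl((d-1)/d\bigr)/\bigl(1-1/d\bigr)=1$, so $(1-\rme\,p)^{d-1}>\rme^{-1}$ and the induction closes. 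With that repair the argument is correct for finite $\mathcal{B}$ (which is all the paper needs; the events are indexed by the vertices and edges of a finite graph). Your aside about infinite $\mathcal{B}$ should be dropped, however: the bound you obtain is a product of infinitely many factors $1-\rme\,p<1$, which tends to $0$, so passing to the limit over finite truncations does not yield positive probability, and indeed the conclusion can fail for infinitely many events without further assumptions. You should also record the standard companion point that the conditioning events occurring in the induction have positive probability (proved simultaneously by the same induction), so that all conditional probabilities used are well defined.
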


\noindent
In our preprocessing step, we apply the Local Lemma to the (product)
probability space obtained by, for each colour $c$ and vertex $v$,
independently assigning $c$ to a list $R(v)$ with
probability~$\frac16\eps$.

For an edge $e$ with endpoints $u$ and $v$, we set
$R(e)=L(e)\cap R(u)\cap R(v)$ and $L'(e)=L(e)-(R(u)\cup R(v))$. Note that
the sublists $R(e)$ and $L'(e)$ defined in this way must satisfy
condition~(a) of Lemma~\ref{beforeandafter}. We shall now prove that, with
positive probability, conditions (b) and (c) are also satisfied for all
edges $e$. Let $B_e$ be the event that condition (b) is not satisfied for
the edge $e$, i.e.\
$|L(e)\sm L'(e)|=|L(e)\cap(R(u)\cup R(v))|>\frac23\eps\Delta$. Let $C_e$ be
the event that $|R(e)|<\frac1{100}\eps^2\Delta$ for the edge $e$. For
sufficiently large $\Delta$, $\frac1{100}\eps^2\Delta\ge \Delta^{9/10}$, so
if $C_e$ does not hold then condition (c) is satisfied for edge $e$.

To conclude the proof of Lemma~\ref{beforeandafter}, we apply the Local
Lemma to show that with positive probability none of the bad events
$B_e,C_e$ occurs. Now $B_e$ and $C_e$ are determined completely by the
random assignments made at the endpoints of $e$, so letting
$S_{B_e}=S_{C_e}=\{B_f,C_f\mid\text{$f=e$ or $f$ is incident to $e$}\}$, we
see that condition (1) of the Local Lemma holds with $d=4\Delta-2$ and $p$
the maximum of the unconditional probability of~$B_e$ and the unconditional
probability of~$C_e$.

Now, for any edge $e$ with endpoints $u$ and $v$, the number of colours in
$R(e)=L(e)\cap R(u)\cap R(v)$ is the sum of $|L(e)|$ independent 0-1
variables, each of which is 1 with probability $\frac1{36}\eps^2$. So, the
expected value of this random variable is $\frac1{36}\eps^2\lstar_e$; and
by Remark~\ref{rem.lstare2}, for large $\Delta$, this is at least
$\frac1{72}\eps^2\Delta$. Standard concentration inequalities (e.g.\ the
Chernoff bounds) tell us that the probability that this variable differs
from its expected value by some $t>0$ which is less than its expected value
is $2^{-\Omega(t^2/\Delta)}$. So, the probability of $C_e$ is
$2^{-\Omega(\Delta)}$.

In the same vein, for any edge $e$ with endpoints $u$ and $v$, the number
of colours in $L(e)\cap(R(u)\cup R(v))$ is the sum of $|L(e)|$ independent
0-1 variables, each of which is 1 with probability at most $\frac26\eps$.
We obtain that the expected value of this random variable is at most
$\frac13\eps\lstar_e$, which is at most
$\frac13\eps\bigl(\frac32+\eps\bigr)\Delta<\frac7{12}\eps\Delta$. Again
applying standard concentration inequalities, we see that the probability
of~$B_e$ is $2^{-\Omega(\Delta)}$.

Thus for large $\Delta$ the hypotheses of the Local Lemma hold with
$p=\sfrac{1}{3d}$, and we have completed the proof of
Lemma~\ref{beforeandafter}.
\proofend

%%%%%%%%%%%%%%%%%%%%%%%%%%%%%

\subsection{Kahn's Result as a Black Box}

Kahn presents an algorithm in~\cite{Kahn00} which shows that the list
chromatic index of a multigraph exceeds its fractional chromatic index
by~$o(\Delta)$. Actually, the algorithm implicitly contains a subroutine
which does more than this, providing a proof (which we shall describe
later, see Subsection~\ref{proof.kahnth2}) of the following result.

\begin{theorem}[Kahn \cite{Kahn00}]\label{kahnth2}\mbox{}\\*
  For every~$\delta$ with $0<\delta<1$ and every $C>0$, there exists
  a~$\Delta_{\delta,C}$ such that the following holds for all
  $\Delta\ge\Delta_{\delta,C}$. Let~$H$ be a multigraph with maximum degree
  at most~$\Delta$, and with a list~$L(e)$ of acceptable colours for every
  edge~$e$. Suppose that the following conditions are satisfied:

  \smallskip
  \qitee{(1)} For every vertex $v$ and edge $e$ incident to $v$,
  $|L(e)|\ge d(v)(1+\delta)$.

  \smallskip
  \qitee{(2)} For every odd set $X$ of vertices of $H$, the sum of
  $z_e=\dfrac{1+\delta}{|L(e)|}$ over the edges joining vertices of $X$ is
  at most $\half(|X|-1)$.

  \smallskip
  \qitee{(3)}For every edge~$e$: $|L(e)|\ge\sfrac{\Delta}{C}$.

  \smallskip\noindent
  Then we can find an acceptable edge-colouring of $H$.
\end{theorem}

\noindent
This theorem is not explicitly stated in Kahn's paper, although it follows
in just a few pages from the proof of \cite[Lemma~3.1]{Kahn00}, which forms
the bulk of his paper. We pull the result out of his discussion, after
showing that Theorem \ref{kahnth2} implies the special case of
Lemma~\ref{lemfec2} when~$J_1$ and $J_2$ are empty (and we have just the
simple conclusion ``\emph{Then we can find an acceptable edge-colouring of
  $H$.}'').

To prove this implication, we need only show that for $0<\eps<\frac14$ and
sufficiently large $\Delta$, any family of lists satisfying conditions (1)
and (2) of Lemma~\ref{lemfec2} must satisfy conditions (1) and (2) of
Theorem \ref{kahnth2} for $\delta=\half\eps$, since we noted in
Remark~\ref{rem.lstare2} that, for sufficiently large $\Delta$,
$\lstar_e>\half\Delta$ for each edge $e$, so condition (3) holds with
$C=2$. This is the content of the following lemma.

\begin{lemma}\label{lemkey}\mbox{}\\*
  Let $0<\eps<\frac14$. Then there is a~$\Delta_\eps$ such that for every
  $\Delta\ge\Delta_\eps$ the following holds. Let~$H$ be a multigraph with
  vertex set~$V$ and maximum degree at most~$\Delta$, and for every
  edge~$e$ let~$L(e)$ be a list of acceptable colours. Suppose the
  following conditions are satisfied:

  \smallskip
  \qitee{(1)}For every edge~$e$: $|L(e)|\ge\lstare$.

  \smallskip
  \qitee{(2)}For every set~$X$ of an odd number of vertices of~$H$:
  \[\sum_{v\in X}(\Delta-d(v))-e(X,V\sm X)\:\le\:
  \tfrac1{10}\eps|X|\Delta.\]

  \noindent
  Then, the following properties hold:

  \smallskip
  \qitee{(a)} For every vertex $v$ and edge $e$ incident to $v$:
  $|L(e)|\ge\bigl(1+\half\eps\bigr)d(v)$.

  \smallskip
  \qitee{(b)} For every odd set $X$ of vertices of $H$: the sum of
  $z_e=\dfrac{1+\half\eps}{|L(e)|}$ over the edges joining vertices of $X$
  is at most $\half(|X|-1)$.
\end{lemma}

\begin{proof}
  Whenever an inequality requires~$\Delta$ to be large enough, we use
  ``$\ge_*$''.

  To begin, we note that condition (2) of the lemma implies that every
  vertex~$w$ of~$H$ has degree at least
  $\bigl(\half-\frac1{20}\eps\bigr)\Delta$. Hence, for any edge $e=vw$
  of~$H$, condition (1) of the lemma implies that
  $|L(e)|\ge d(v)+\frac{19}{20}\eps\Delta-3\Delta^{1/2}\ge_*
  d(v)+\frac34\eps\Delta\ge\bigl(1+\frac34\eps\bigr)d(v)$. Thus property
  (a) holds.

  \smallskip
  Now we check property (b) for the case $|X|=3$. Consider a subgraph~$F$
  of~$H$ with vertex set~$X$ consisting of three distinct vertices $x,y,z$,
  and with $\alpha\Delta >0$ edges. Note that $\alpha\le\frac32$, since
  $2|E(F)|\le d(x)+d(y)+d(z)\le 3\Delta$ (where $d(\,\cdot\,)$ refers to
  the degree in the whole graph~$H$). Applying the second condition gives
  \[3\Delta-d(x)-d(y)-d(z)\:\le\: e(X,V\sm X)+\tfrac3{10}\eps\Delta.\]
  Since we also have
  $3\Delta-d(x)-d(y)-d(z)=3\Delta-2\alpha\Delta-e(X,V\sm X)$, we obtain
  \[3\Delta-d(x)-d(y)-d(z)\:\le\:
  \Bigl(\half\bigl(3+\tfrac3{10}\eps\bigr)-\alpha\Bigr)\Delta,\]
  which we can rewrite as
  \[\tfrac32\Delta\:\ge\:
  3\Delta-d(x)-d(y)-d(z)-\tfrac3{20}\eps\Delta+\alpha\Delta.\]
  Substituting this into the first condition of the lemma yields that for
  any edge $e=uv$ in~$F$:
  \[|L(e)|\:\ge\:\Delta+(d(u)+d(v)-d(x)-d(y)-d(z))+
  \bigl(\alpha+\tfrac{17}{20}\eps\bigr)\Delta-3\Delta^{1/2}.\]
  Since $\Delta-d(w)$ is non-negative for any~$w$ in~$X$, and
  $\{u,v\}\subset\{x,y,z\}$, this yields
  \[|L(e)|\:\ge\:
  \bigl(\alpha+\tfrac{17}{20}\eps\bigr)\Delta-3\Delta^{1/2}\:\ge_*\:
  \bigl(\alpha+\tfrac34\eps\bigr)\Delta.\]
  Since $\alpha\le\frac32$, this gives that for any edge~$e$ in~$F$,
  $z_e\le\dfrac{1+\half\eps}{\bigl(\alpha+\frac34\eps\bigr)\Delta}\le
  \dfrac1{\alpha\Delta}$.

  We can conclude that
  $\sum_{e\in E(F)}z_e\le (\alpha\Delta)\cdot\dfrac1{\alpha\Delta}=1$. This
  shows that property (b) holds for all sets~$X$ of three vertices.

  \smallskip
  Given a multigraph $G$ and a vertex $v$, we write $E_{G,v}$, or simply
  $E_v$, for the set of edges incident to $v$. Next consider any
  subgraph~$F$ of~$H$ with vertex set~$X$, where $|X|\ge5$ is odd.
  Throughout the rest of the proof, for each vertex $v$ of $F$ we write
  $E_v$ for $E_{F,v}$. We partition the vertices of~$F$ into a set~$B$ of
  vertices with degree at least~$\frac34\Delta$ and a set~$S$ of vertices
  with degree less than~$\frac34\Delta$ (where degrees are in $H$).

  \medskip\noindent
  \textbf{Case 1}: There is a vertex in~$B$ with degree at
  most~$\frac78\Delta$, or a vertex in~$S$ with degree at
  most~$\frac58\Delta$.

  \smallskip
  For any edge $e=vw$ with $w \in B$, applying the first condition of the
  lemma, we obtain
  $|L(e)|\ge d(v)+\frac14\Delta+\eps\Delta-3\Delta^{1/2}\ge_*
  \bigl(1+\half\eps\bigr)\frac54d(v)$. Thus, $z_e\le\dfrac{4}{5d(v)}$.
  Since (a) holds, we have that $z_e\le\sfrac{1}{d(v)}$ for all edges $e$
  incident to $v$, and hence for each vertex $v\in B$:
  \[\sum_{e\in E_v}z_e\:\le\:
  \frac4{5d(v)}|E_v|+\frac1{5d(v)}e(\{v\},S);\]
  while for each vertex~$v$ in~$S$:
  \[\sum_{e\in E_v}z_e\:\le\:
  \frac1{d(v)}|E_v|-\frac1{5d(v)}e(\{v\},B).\]

  We estimate, using that the vertices in~$S$ have smaller degree than the
  vertices in~$B$,
  \begin{align*}
    2\sum_{e\in E(F)}z_e\:& = \:\sum_{v\in X}\sum_{e\in E_v}z_e\\
    &\le\:\sum_{v\in B}\frac4{5d(v)}|E_v|+
    \sum_{v\in S}\frac1{d(v)}|E_v|+\!\!
    \sum_{\substack{e\in E(F)\\[1pt]e=vw,\:v\in B,\:w\in S}}\!\!
    \Bigl(\frac1{5d(v)}-\frac1{5d(w)}\Bigr)\\
    &\le\:\sum_{v\in B}\frac4{5d(v)}|E_v|+
    \sum_{v\in S}\frac1{d(v)}|E_v|\\
    &\le\:\tfrac45|B|+|S|-\tfrac45e(X,V\sm X)\frac1\Delta.
  \end{align*}
  Also, applying the second condition of the lemma and the assumption for
  this Case~1, we see that
  \[e(X,V\sm X)\:\ge\:
  \tfrac14\Delta|S|+\tfrac18\Delta-\tfrac1{10}\eps|X|\Delta.\]
  Combining the two estimates, we obtain
  \[2\sum_{e\in E(F)}z_e\:\le\:
  \tfrac45|B|+\tfrac45|S|-\tfrac1{10}+\tfrac2{25}\eps|X|\:=\:
  |X|\bigl(\tfrac45+\tfrac2{25}\eps\bigr)-\tfrac1{10}.\]
  Since $\eps\le\frac14$ and $|X|\ge5$, this yields that
  $2\!\sum\limits_{e\in E(F)}\!z_e\le|X|-\frac{9}{50}|X|-\frac1{10}\le
  |X|-1$, as required for property~(b).

  \medskip\noindent
  \textbf{Case 2}: Every vertex in~$B$ has degree at least~$\frac78\Delta$
  and every vertex in~$S$ has degree at least~$\frac58\Delta$.

  \smallskip
  Applying the first condition of the lemma as in Case~1, we see that for
  an edge~$e$ with endvertices $v$ and $w$, we have
  $|L(e)|\ge d(v)+\frac18\Delta+\eps\Delta-3\Delta^{1/2}\ge_*
  \bigl(1+\half\eps\bigr)\cdot\frac98d(v)$, and if $v\in B$, then we get
  $|L(e)|\ge d(v)+\frac38\Delta+\eps\Delta-3\Delta^{1/2}\ge_*
  \bigl(1+\half\eps\bigr)\cdot\frac{11}8d(v)$. So, for each vertex $v\in B$
  we have
  \[\sum_{e\in E_v}z_e\:\le\:
  \frac8{11d(v)}|E_v|+\frac{16}{99d(v)}e(\{v\},S);\]
  while for each vertex~$v$ in~$S$ we can write
  \[\sum_{e\in E_v}z_e\:\le\:
  \frac8{9d(v)}|E_v|-\frac{16}{99d(v)}e(\{v\},B).\]
  Following the same method as in Case~1, this leads to
  \begin{align*}
    2\sum_{e\in E(F)}z_e\:=\: \sum_{v\in X}\sum_{e\in E_v}z_e\:&
    \le\:\sum_{v\in B}\frac8{11d(v)}|E_v|+
    \sum_{v\in S}\frac8{9d(v)}|E_v|\\
    &\le\:\tfrac8{11}|B|+\tfrac89|S|-\tfrac8{11}e(X,V\sm X)\frac1\Delta.
  \end{align*}
  Also, applying the second condition of the lemma, we see that
  \[e(X,V\sm X)\:\ge\:\tfrac14\Delta|S|-\tfrac1{10}\eps|X|\Delta.\]
  Since $\frac89 -\frac2{11}<\frac8{11}$, we obtain
  \[2\sum_{e\in E(F)}z_e\:\le\:
  |X|\bigl(\tfrac8{11}+\tfrac8{110}\eps\bigr).\]
  Since $\eps<1$ and $|X|\ge5$, this yields that
  $2\!\sum\limits_{e\in E(F)}\!z_e\le\frac45|X|\le|X|-1$, as required.
\end{proof}

%%%%%%%%%%%%%%%%%%%%%%%%%%%%%

\subsection{Opening the Lid}

In this section, we discuss how Theorem \ref{kahnth2} is implicitly proved
in Kahn's paper. First however, we need to introduce the special type of
probability distributions he considers.

\subsubsection{Hard-core Probability Distributions on Matchings}

For a probability distribution~$p$, defined on the matchings of a
multigraph~$H$, we let~$x^p(e)$ be the probability that~$e$ is in a
matching chosen according to~$p$. We call the value of~$x^p(e)$ the
\emph{marginal of~$p$ at~$e$}. The vector~$x^p=(x^p(e))$ indexed by the
edges~$e$ is called the \emph{marginal of~$p$}.

We are actually interested in using special types of probability
distributions on the matchings of~$H$. A probability distribution~$p$ on
the matchings of~$H$ is \emph{hard-core} if it is obtained by associating a
non-negative real~$\lambda^p(e)$ to each edge~$e$ of~$H$ so that the
probability that we pick a matching~$M$ is proportional to
$\prod_{e\in M}\lambda^p(e)$. I.e.\ setting
$\lambda^p(M)=\prod_{e\in M}\lambda^p(e)$ and letting~$\mathcal{M}(H)$ be
the set of matchings of~$H$, we have
\[p(M)\:=\:
\frac{\lambda^p(M)}{\!\sum\limits_{N\in\mathcal{M}(H)}\!\lambda^p(N)}.\]
We call the values~$\lambda^p(e)$ the \emph{activities of~$p$}.

We want to characterise for which families of lists, a multigraph $H$ with
maximum degree at most $\Delta$ has a hard-core probability distribution
$p$ on its matchings such that we have (i) $x^p(e)=|L(e)|^{-1}$ for each
edge~$e$, and (ii)~for some $K>0$, $\lambda^p(e)\le\sfrac{K}{\Delta}$ for
each edge~$e$.

Finding an arbitrary probability distribution on the matchings of~$H$ with
marginals~$x$ is equivalent to expressing~$x$ as a convex combination of
incidence vectors of matchings of~$H$. So, we can use a seminal result due
to Edmonds~\cite{Edm65} to understand for which~$x$ this is possible.

The \emph{matching polytope}~$\MP(H)$ is the set of non-negative
vectors~$x$ indexed by the edges of~$H$ which are convex combinations of
incidence vectors of matchings.

\begin{theorem}[Edmonds~\cite{Edm65}]{\bf(Characterisation of the Matching
    Polytope)}\label{mpthm}\mbox{}\\*
  For a multigraph~$H$, a non-negative vector $x=(x_e:e\in E(H))$ is
  in~$\MP(H)$ if and only if

  \smallskip
  \qitee{(1)}for every vertex~$v$ of~$H$:
  $\sum\limits_{e\in E_v}\!x_e\le1$, and

  \smallskip
  \qitee{(2)} for every set $X$ of vertices of~$H$ with $|X|\ge3$ and odd:
  $\sum\limits_{e\in E(X)}\!x_e\le\half(|X|-1)$.
\end{theorem}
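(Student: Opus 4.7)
The plan is to establish both directions of the characterisation. Necessity is the easy direction: for any matching $M$, its indicator vector $\chi_M$ satisfies condition~1 since no vertex lies in more than one matching edge, and it satisfies condition~2 since a matching restricted to an odd-size vertex set $F$ must leave at least one vertex of $F$ unsaturated, giving $|M\cap E(F)|\le\half(|V(F)|-1)$. Both conditions are linear, so they are preserved under convex combinations; hence every vector in $\MP(H)$ lies in the polytope $P$ cut out by conditions 1, 2 and $x\ge 0$.

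For sufficiency, I would show that every extreme point of $P$ is the incidence vector of a matching, by induction on $|V(H)|+|E(H)|$. Let $x^*$ be a supposed counterexample: an extreme point of $P$ that is not a matching incidence vector, with $(H,x^*)$ chosen to minimise $|V(H)|+|E(H)|$. A few standard reductions rule out the easy cases: a vertex incident only to edges with $x^*(e)=0$ may be deleted; an edge with $x^*(e)=0$ may be deleted; and an edge with $x^*(e)=1$ can be removed together with its endpoints, since condition~1 forces all other incident variables to vanish. After these reductions we may assume $0<x^*(e)<1$ for every edge $e$, so the only tight constraints on $x^*$ come from conditions 1 and~2. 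Because $x^*$ is a vertex of $P$ in $\mathbb{R}^{|E(H)|}$, at least $|E(H)|$ of these must be linearly independent and tight.

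The heart of the argument is the analysis of tight odd-set constraints. One first shows that there must exist a tight odd set $F$ with $3\le|V(F)|\le|V(H)|-1$, by counting: if all tight constraints came from vertices (and possibly the single odd set $V(H)$), the resulting linear system would force $x^*$ to be a half-integral vector of a very restricted shape, which one can then show is actually a convex combination of matchings, contradicting extremality. Given such a proper tight odd set $F$, apply the classical ``blossom shrinking'' step: contract $F$ to a single vertex to obtain a smaller graph $H'$, apply induction to express the restriction of $x^*$ to $H'$ as a convex combination of matchings of $H'$, and separately apply induction on the subgraph induced by $F$ (after adjusting by the half-integral ``blossom'' contribution from $F$'s interior edges). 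The two decompositions are then stitched together along $F$ to realise $x^*$ as a convex combination of matchings of $H$, contradicting the assumption that $x^*$ is not such a convex combination.

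The main obstacle is precisely this stitching step: one must pair up matchings on the contracted graph with matchings on $F$ in a way that respects which vertex of $F$ is hit by the contracted edge, and the tightness of the odd-set constraint on $F$ is exactly what ensures that the parities match up so that the pairing is possible. Edmonds' original approach is a primal-dual algorithm (the blossom algorithm) which constructively produces such a decomposition; an alternative route, which I find cleanest for a proof write-up, is to establish total dual integrality of the system (1)–(2) directly via an uncrossing argument on maximum-weight fractional matchings, from which the polyhedral statement follows.
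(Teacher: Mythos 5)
The paper offers no proof of this statement: it is quoted from Edmonds with a citation, and the remark immediately following it in the paper says only that necessity is easy and that "it is proving that they are sufficient which is difficult". So there is no argument in the paper to compare yours against; I can only assess the proposal on its own terms. What you sketch is the standard extreme-point/blossom-shrinking proof (essentially Lov\'asz's, as presented in Schrijver's book), and its skeleton is correct: the necessity direction is complete as written, and the reductions to $0<x^*(e)<1$, the rank-$|E(H)|$ count of tight constraints, the dichotomy between ``only degree constraints tight'' and ``some proper tight odd set'', and the contract-and-stitch induction are exactly the right moves. Two points in the sketch would need repair rather than mere expansion. First, the second inductive piece must be $H$ with $V\sm F$ contracted to a single vertex, not the induced subgraph $H[F]$: you need the edges of $\delta(F)$ present on both sides in order to pair up the matchings, and you must also check that the new contracted vertex satisfies condition~1, i.e.\ $x^*(\delta(F))\le1$, which follows by combining the tight odd-set constraint with the degree constraints on $F$. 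Second, in the ``no proper tight odd set'' case the right conclusion is not half-integrality but that the support of $x^*$ is a disjoint union of cycles; odd cycles are excluded by condition~2 (the forced all-$\half$ solution violates it), and on an even cycle the tight degree constraints leave a one-parameter family of solutions, contradicting extremality. With those corrections, plus the observation that extreme points of a rational polyhedron are rational (so both inductive decompositions can be written over a common denominator, and tightness of the odd-set constraint guarantees each matching in the $H/(V\sm F)$ decomposition misses exactly one vertex of $F$, which is what makes the pairing count come out), the plan goes through. Your alternative total-dual-integrality route is also legitimate, but it proves a strictly stronger theorem and would be a longer detour than the polyhedral induction.
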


\begin{remark}\quad
  \rm It is easy to see that conditions~(1) and~(2) are necessary as they
  are satisfied by all the incidence vectors of matchings and hence by all
  their convex combinations. It is the fact that they are sufficient which
  makes the theorem so valuable.
\end{remark}

\noindent
It turns out that we can choose a hard-core distribution with marginals~$x$
provided all of the above inequalities are strict.

\begin{lemma}[Lee~\cite{Lee90}; Rabinovitch, Sinclair and
  Widgerson~\cite{RSW92}]\label{lemhcd7}\mbox{}\\*
  For a multigraph~$H$, there is a hard-core distribution with marginals a
  given non-negative vector $x=(x_e:e\in E(H))$ if and only if

  \smallskip
  \qitee{(a)} for every vertex $v$ of $H$:
  $\sum\limits_{e\in E_v}\!x_e<1$, and

  \smallskip
  \qitee{(b)} for every set $X$ of vertices of~$H$ with $|X|\ge3$ and odd:
  $\sum\limits_{e\in E(X)}\!x_e<\half(|X|-1)$.
\end{lemma}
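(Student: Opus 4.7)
The plan is to handle the two implications separately, with necessity being essentially a one-line observation and sufficiency resting on a max-entropy / convex-duality argument via Edmonds' theorem.

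For necessity, suppose $p$ is hard-core with activities $\lambda^p(e) \ge 0$ and marginal vector $x$. The empty matching contributes $\lambda^p(\emptyset) = 1$ to the normalising sum, so $p(\emptyset) > 0$. For any vertex $v$, the probability that $v$ is saturated by the random matching equals $\sum_{e \ni v} x_e$, and is at most $1 - p(\emptyset) < 1$, giving condition~1. For any odd $F$, pointwise $|M \cap E(F)| \le (|V(F)|-1)/2$, and the empty matching realises strict inequality with positive weight; taking expectation gives condition~2 strictly.

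For sufficiency, first delete any edges with $x_e = 0$ (assigning them zero activity); so without loss of generality $x_e > 0$ everywhere. The idea is to produce the desired hard-core distribution as the Shannon-entropy maximiser over the polytope $\mathcal{P}_x$ of all probability distributions on $\mathcal{M}(H)$ having marginal~$x$. By Edmonds' theorem (Theorem~\ref{mpthm}) the hypotheses place $x$ in $\MP(H)$, so $\mathcal{P}_x$ is non-empty; it is also closed and convex, and entropy is strictly concave, so a unique maximiser $p^*$ exists. The KKT stationarity conditions for the constrained maximisation of $-\sum_M p(M) \log p(M)$ force
\[p^*(M) \;\propto\; \prod_{e \in M} e^{\alpha_e}\]
for some dual variables $\alpha_e$, which is the hard-core form with activities $\lambda^p(e) = e^{\alpha_e}$, provided those duals are finite. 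Equivalently, consider the log-partition function
\[F(\mu) \;=\; \log \sum_{M \in \mathcal{M}(H)} \exp\Bigl(\sum_{e \in M} \mu_e\Bigr),\]
whose gradient at $\mu$ is the marginal of the hard-core distribution with activities $e^{\mu_e}$; by Legendre-duality $\nabla F$ is a bijection from $\mathbb{R}^{E(H)}$ onto the relative interior of $\MP(H)$, so any $x$ in that relative interior is realised.

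The crux, and the expected main obstacle, is to argue that the dual variables (equivalently, the preimage under $\nabla F$) are actually finite — i.e.\ that $\nabla F$ is surjective onto the relative interior of $\MP(H)$, not just some proper subset of it. This is precisely where the strict inequalities are used: by Edmonds' theorem the facets of $\MP(H)$ are exactly the vertex-degree and odd-set inequalities (together with $x_e \ge 0$), so the strict versions of these inequalities characterise the relative interior of $\MP(H)$, and a standard property of closed convex functions of Legendre type then guarantees that the gradient map $\nabla F$ is a homeomorphism between the relative interiors of its domain and of its conjugate's domain. If one wished to avoid convex analysis, the same conclusion could be obtained by a continuity/degree argument as $\mu$ ranges over large boxes in $\mathbb{R}^{E(H)}$, observing that boundary behaviour of $\nabla F$ matches the boundary of $\MP(H)$ — but the duality route is the cleanest.
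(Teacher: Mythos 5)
The paper does not prove this lemma; it cites it as a known result of Lee~\cite{Lee90} and of Rabinovitch, Sinclair and Widgerson~\cite{RSW92}, so there is no in-paper argument to compare against. Your proof is correct and follows the standard max-entropy / Legendre-duality route, which is essentially what those cited sources do. Necessity is clean: $p(\varnothing)>0$ for any hard-core distribution, the events $\{e\in M\}$ over $e\ni v$ are disjoint so $\sum_{e\ni v}x_e=\Pr(v\text{ saturated})\le 1-p(\varnothing)<1$, and the odd-set bound holds pointwise with strict slack on the empty matching. For sufficiency you correctly first pass to the subgraph of edges with $x_e>0$ (edges with $x_e=0$ must receive activity $0$), note that strictness of the Edmonds inequalities together with $x_e>0$ puts $x$ in the interior of $\MP(H)$, and then appeal to the fact that the gradient of the log-partition function is a bijection from $\mathbb{R}^{E(H)}$ onto the interior of $\MP(H)$ --- valid here because $F$ is finite, everywhere smooth and strictly convex, hence of Legendre type. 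You also correctly identify that this surjectivity (finiteness of the dual variables) is the crux and is exactly where strictness of the inequalities enters. One small imprecision worth tightening: Edmonds' theorem gives a complete, not irredundant, linear description of $\MP(H)$, so it is not quite right to say those inequalities ``are the facets''; but this does not affect the argument, since a point strictly satisfying every inequality in any complete description of a full-dimensional polytope is automatically interior, which is all you actually use.
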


\noindent
In order to ensure that the~$\lambda^p$ are bounded, it turns out that we
just have to bound our distance from the boundary of the Matching Polytope.

\begin{lemma}[Kahn and Kayll~\cite{KaKa97}]\label{hcd1}\mbox{}\\*
  For all~$\delta$ with $0<\delta<1$, there is a~$\beta$ such that, for
  every multigraph~$H$, if~$p$ is a hard-core distribution whose marginals
  are in $(1-\delta)\MP(H)$, then

  \smallskip
  \qitee{(a)}for every edge~$e$ of~$H$: $\lambda^p(e)<\beta x^p(e)$, and

  \smallskip
  \qitee{(b)}for every vertex~$v$ of~$H$:
  $\sum\limits_{e\in E_v}\!\lambda^p(e)<\beta$.
\end{lemma}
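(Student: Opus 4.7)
The plan is to establish parts~1 and~2 simultaneously, since they are linked by two partition-function identities. Write $\lambda(f) := \lambda^p(f)$, $x(f) := x^p(f)$, and $Z(H') := \sum_{M \in \mathcal{M}(H')} \prod_{f \in M} \lambda(f)$ for any submultigraph $H'$ of $H$. A direct computation from the definition of a hard-core distribution yields, for any edge $e = uv$,
\[
\frac{\lambda(e)}{x(e)} \;=\; \frac{Z(H)}{Z(H - \{u, v\})} \;=\; \frac{1}{\Pr_p\bigl[\,u, v \text{ both uncovered by } M\,\bigr]},
\]
and analogously $Z(H)/Z(H-v) = 1/\Pr_p[\,v \text{ uncovered}\,] \le 1/\delta$, using the slack $\sum_{f \ni v} x(f) \le 1-\delta$.

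First I would derive part~1 from part~2. Decomposing a matching $M$ of $H$ by the edge (if any) of $M$ at $u$ and at $v$, and using monotonicity of $Z$ under vertex deletions, I obtain
\[
\frac{Z(H)}{Z(H-\{u,v\})} \;\le\; \Bigl(1 + \sum_{f \ni u} \lambda(f)\Bigr)\Bigl(1 + \sum_{g \ni v} \lambda(g)\Bigr),
\]
so once part~2 gives $\sum_{f \ni v} \lambda(f) < \beta_2$, part~1 follows with $\beta = (1+\beta_2)^2$. For part~2, I would expand $Z(H)$ by the edge of $M$ (if any) covering~$v$:
\[
\frac{Z(H)}{Z(H-v)} \;=\; 1 + \sum_{f = vw} \lambda(f) \cdot R_f, \qquad R_f \;:=\; \frac{Z(H-\{v,w\})}{Z(H-v)},
\]
which combined with $Z(H)/Z(H-v) \le 1/\delta$ gives $\sum_f \lambda(f) \, R_f \le 1/\delta - 1$. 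Thus part~2 reduces to a uniform lower bound $R_f \ge c_\delta > 0$; note $R_f$ is itself the probability that $w$ is uncovered in the hard-core distribution $p'$ on $H - v$ with the same activities~$\lambda$.

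The main obstacle is this lower bound on $R_f$. The difficulty is that $p'$ need not have marginals in $(1-\delta)\MP(H-v)$: conditioning on the event ``$v$ uncovered'' (of probability only $\ge \delta$) can inflate the marginals of edges near~$v$ by up to a factor $1/\delta$, breaking a naive induction on $|V(H)|$. Overcoming this is the heart of the Kahn--Kayll proof. The approach combines Edmonds' full description of $\MP(H)$ (Theorem~\ref{mpthm})---in particular the odd-set inequalities, which the vertex inequalities alone cannot replace---with a compactness and continuity argument rooted in Lemma~\ref{lemhcd7}: the activity-to-marginal map is a homeomorphism from $(0,\infty)^{E(H)}$ onto the relative interior of $\MP(H)$, so any sequence of putative counterexamples with $(1-\delta)$-bounded marginals but divergent activities would, after truncation to bounded local neighborhoods (justified by the independence-at-distance estimate in Lemma~\ref{hcindth1}), converge to a finite configuration violating this homeomorphism. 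Quantifying the compactness argument delivers the required $c_\delta$, and closes both parts.
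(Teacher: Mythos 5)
The paper does not prove Lemma~\ref{hcd1}; it cites the result from Kahn and Kayll~\cite{KaKa97}, so there is no in-paper proof to compare your argument against. Your preliminary reductions are correct and well chosen: the partition-function identities $\lambda(e)/x(e)=Z(H)/Z(H-\{u,v\})=1/\Pr_p[\,u,v\text{ uncovered}\,]$ and $Z(H)/Z(H-v)=1/\Pr_p[\,v\text{ uncovered}\,]\le 1/\delta$; the deduction of Condition~1 from Condition~2 via $Z(H)/Z(H-\{u,v\})\le\bigl(1+\sum_{f\ni u}\lambda(f)\bigr)\bigl(1+\sum_{g\ni v}\lambda(g)\bigr)$; and the reduction of Condition~2 to a uniform lower bound $R_f\ge c_\delta$. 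You also correctly diagnose the obstruction: conditioning on ``$v$ uncovered'' can inflate marginals near~$v$ by a factor~$1/\delta$, so a naive induction on the number of vertices fails.

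However, the proposed resolution of that obstruction is circular. You invoke Lemma~\ref{hcindth1} to justify ``truncation to bounded local neighbourhoods,'' but the hypothesis of Lemma~\ref{hcindth1} is precisely $\sum_{e\ni x}\lambda^p(e)<K$ for all~$x$ --- that is, Condition~2 of the very lemma you are trying to prove. Until the activity bound is established, the distance-independence estimate is unavailable, and the compactness argument has no way to tame the dependence of the finite-graph homeomorphism bound on $|V(H)|$. The gap is not cosmetic: the uniform lower bound on $R_f$ (equivalently, $\Pr_p[\,u,v\text{ both uncovered}\,]\ge c_\delta$) for arbitrary multigraphs is genuinely the hard part. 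For instance, on a triangle with all activities $\lambda>1$ the ``uncovered'' events at two vertices are \emph{negatively} correlated, and it is exactly the odd-set inequality of Theorem~\ref{mpthm} on the triangle that prevents such~$\lambda$ from growing once the marginals are confined to $(1-\delta)\MP(H)$. One has to use Edmonds' odd-set description in a substantive combinatorial way, not merely as a continuity ingredient to be amplified by an independence lemma whose hypotheses you have not yet secured.
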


\noindent
The material presented in this subsection is discussed in fuller detail in
\cite[Chapter~22]{MoRe02}.

\subsubsection{The Proof of Theorem \ref{kahnth2}}\label{proof.kahnth2}

As we are about to show, we can prove Theorem \ref{kahnth2} by combining
the results of the last section with the following result, which is also
implicit in \cite{Kahn00}, but much easier to pull out of it. Recall that
for a multigraph $H$ with a list $L(e)$ of acceptable colours for every
edge~$e$, for each colour~$\gamma$, we let $H_\gamma$ be the spanning
subgraph of~$H$ with edges those $e$ such that $\gamma\in L(e)$.

\begin{theorem}[Kahn \cite{Kahn00}]\label{kahnth}\mbox{}\\*
  For every~$\delta$ with $0<\delta<1$ and every $K>0$, there exists
  a~$\Delta_{\delta,K}$ such that the following holds for all
  $\Delta\ge\Delta_{\delta,K}$. Let~$H$ be a multigraph with maximum degree
  at most~$\Delta$, and with a list~$L(e)$ of acceptable colours for every
  edge~$e$.

  Suppose that for every colour~$\gamma$ there exists a hard-core
  distribution~$p_\gamma$ on the matchings of~$H_\gamma$, with
  corresponding marginal~$x^{p_\gamma}$ on the edges, satisfying the
  following conditions:{

    \smallskip
    \qitee{(1)}For every edge~$e$:
    $\sum\limits_{\gamma\in L(e)}\!x^{p_\gamma}(e)=1$.

    \qitee{(2)}For every edge~$e$ and colour~$\gamma$:
    $\lambda^{p_\gamma}(e)\le\sfrac{K}{\Delta}$.}

  \smallskip\noindent
  Then we can find an acceptable edge-colouring of~$H$.
\end{theorem}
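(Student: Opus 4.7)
The plan is to implement Kahn's semi-random (``nibble'') method. In each round I would, for each colour $\gamma$ independently, wake up the distribution $p_\gamma$ with some small probability $\eta>0$; on waking, sample a matching $M_\gamma$ from $p_\gamma$. Each edge $e$ that lies in $M_\gamma$ for some $\gamma \in L(e)$ receives a tentative colour (with ties broken by a fixed rule), and that tentative colour is retained on $e$ if and only if no edge of $H$ incident to $e$ is also tentatively coloured $\gamma$. Retained edges are removed from $H$, the used colours are deleted from the lists of their surviving incident edges, and the distributions $p_\gamma$ are restricted to the residual $H_\gamma^{\text{new}}$ and renormalised.

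The heart of the argument is to show that, with positive probability, after one round the three hypotheses persist on the residual problem with $\delta$ and $C$ degraded only slightly. Concretely, each surviving edge should lose a $(1-\Theta(\eta))$-fraction of colours from its list \emph{and} a $(1-\Theta(\eta))$-fraction of the competing edges at each endpoint, while the renormalised marginals $x^{p'_\gamma}(e)$ remain $O(1/\Delta)$ and $\sum_{\gamma \in L(e)} x^{p'_\gamma}(e) = 1 \pm o(1)$. These concentration facts would follow from the Lov\'asz Local Lemma (Lemma~\ref{lemlocal}) applied to bad events indexed by edges and vertices of $H$. The crucial inputs are: Lemma~\ref{hcd1}, which uses the polytope slack $\delta$ to bound the activities $\lambda^{p_\gamma}$ uniformly; and Lemma~\ref{hcindth1}, which then guarantees that conditioning on events $t$-distant from an edge $e$ perturbs the marginal of $e$ only by a factor $1 \pm o(1)$. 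Together these mean each bad event depends (up to negligible perturbation) only on what happens in a bounded graph-theoretic neighbourhood in $H$, so the LLL applies with $p$ exponentially small and $d$ polynomial in $\Delta$.

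After $O(\log \Delta)$ nibbles the residual lists asymptotically dominate the residual maximum degree, and a final direct application of the Local Lemma (colour each leftover edge from its list) finishes the colouring. The main obstacle will be keeping Condition~1 intact through the iteration: a colour disappears from $L(e)$ whenever some edge adjacent to $e$ retains it, while the renormalised marginal on $e$ in that colour grows by roughly the reciprocal of the survival probability, and the two effects must cancel to order $o(1/|L(e)|)$ \emph{uniformly} over $e$. Establishing this cancellation, so that $\sum_{\gamma \in L(e)} x^{p'_\gamma}(e)$ stays near $1$ throughout the iteration, is the technically delicate step, and is essentially the content of Kahn's original semi-random analysis; once it is proved in one round, the induction on the number of rounds is mechanical.
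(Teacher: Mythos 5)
Your plan is in the right family of arguments, and you correctly identify the two key technical inputs the paper relies on (Lemma~\ref{hcd1} to bound activities via the polytope slack, and Lemma~\ref{hcindth1} to get near-independence after conditioning on distant information) as well as the central difficulty, namely keeping $\sum_{\gamma\in L(e)}x^{p_\gamma}(e)$ near~$1$ across rounds. But the iteration you describe is a genuine R\"odl-type nibble (wake each colour with some small probability~$\eta$, so the maximum degree drops only by a $1-\Theta(\eta)$ factor per round, forcing $\Omega(\log\Delta)$ rounds), whereas the argument the paper actually invokes is Kahn's much more aggressive iteration: in every round, \emph{every} colour~$\gamma$ selects a matching~$M_\gamma$ in~$H_\gamma$, every edge falling in some~$M_\gamma$ is permanently coloured (ties broken randomly), and Kahn's Lemma~3.1 (Lemma~\ref{kahn3.1} in the paper) is applied via the Local Lemma to guarantee that the maximum degree drops by a factor close to~$\rme^{-1}$ in a single round while the marginal sums stay above~$\rme^{-\delta}$. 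Consequently only $O(1)$ iterations are needed before the residual list sizes exceed the residual degrees and the colouring is completed \emph{greedily}, not by a final Local Lemma step.

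This difference is not cosmetic. With $O(1)$ rounds one only needs the marginal-sum condition to degrade by $o(1)$ per round, which is exactly what Lemma~\ref{kahn3.1} delivers; with $\Omega(\log\Delta)$ nibble rounds the per-round error would have to be $o(1/\log\Delta)$, and the ``cancellation'' you flag as delicate becomes substantially harder to control uniformly, which is precisely why Kahn avoids a small-$\eta$ nibble here. You also do not isolate an analogue of Lemma~\ref{kahn3.1} as a self-contained one-round guarantee, which is the load-bearing piece in the paper's reduction. Finally, a small point: your conflict test (``retain~$\gamma$ on~$e$ iff no incident edge is also tentatively coloured~$\gamma$'') is vacuous, since~$M_\gamma$ is a matching and so cannot contain two incident edges; the real conflicts to worry about (an edge lying in several~$M_\gamma$'s for different~$\gamma$) are handled in the paper by the random tie-break, not by rejection.
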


\begin{proof}{\bf\!\!of Theorem \ref{kahnth2}, assuming Theorem
    \ref{kahnth}} \ Consider a multigraph $H$ and family of lists
  satisfying the conditions of Theorem \ref{kahnth2}. For each colour
  $\gamma$ consider the vector $x^{p_\gamma}$ indexed by the edges of
  $H_\gamma$, where $x^{p_\gamma}_e=|L(e)|^{-1}$. Then condition (1) of
  Theorem~\ref{kahnth} holds. Conditions~(1) and (2) of
  Theorem~\ref{kahnth2}, combined with Edmonds's characterisation of the
  matching polytope, tell us that $x^{p_\gamma}$ is in
  $(1-\delta)\MP(H_{\gamma})$. Now let $\beta$ be as in Lemma \ref{hcd1}.
  Then there is a hard-core distribution on $H_\gamma$ with marginals
  $x^{p_\gamma}$ such that, for every edge~$e$ and colour~$\gamma$, we have
  $\lambda^{p_\gamma}(e)\le\beta x^{p_\gamma}(e)$. Thus, setting $K=\beta
  C$, by condition (3) of Theorem \ref{kahnth2}, for every edge~$e$ and
  colour~$\gamma$, we have $\lambda^{p_\gamma}(e)\le\sfrac{K}{\Delta}$.
  Hence condition (2) of Theorem~\ref{kahnth} holds, and we can apply that
  result to complete the proof.
\end{proof}

\noindent
The proof of Kahn's main theorem, \cite[Theorem~1.1]{Kahn00}, demonstrates
that we can obtain an acceptable edge-colouring for a given family of lists
on the edges of a multigraph $H$ with maximum degree at most~$\Delta$, by
first showing that there are hard-core distributions with marginals
$|L(e)|^{-1}$ in each~$H_\gamma$ which satisfy the hypotheses of
\cite[Lemma~3.1]{Kahn00} (this is done in the second paragraph of
\cite[page~127]{Kahn00}), and then iteratively applying this lemma to reach
a situation where we can finish off greedily.

To prove Theorem~\ref{kahnth} following exactly the same scheme, we need
simply ensure that hard-core distributions satisfying the hypotheses of
\cite[Lemma~3.1]{Kahn00} with marginals $|L(e)|^{-1}$ at~$e$ exist for our
family of lists. But the hypotheses of \cite[Lemma~3.1]{Kahn00} are
precisely that conditions~(1) and (2) of Theorem~\ref{kahnth} hold, and
thus we have established Theorem~\ref{kahnth}.

%%%%%%%%%%%%%%%%%%%%%%%%%%%%%

\subsection{Modifying Kahn's Result}\label{subsec.modkahn}

In this section, we will modify Kahn's result so that by taking $J_1$ and
$J_2$ into account it proves Lemma~\ref{lemfec2}. In order to do so, we
consider the modification of Theorem~\ref{kahnth}, obtained by:

\smallskip
\qitee{(i)}Adding at the end of the first paragraph of that theorem:\\
``\emph{Suppose furthermore that~$J_1$ is a graph with vertex set $E(H)$
  and maximum degree at most~$\Delta^{1/2}$, $J_2$ is a graph with vertex
  set $E(H)$ and maximum degree at most~$\Delta^{1/4}$, and every list
  $L(e)$ has at most $2\Delta$ elements.}''

\smallskip
\qitee{(ii)}And adding at the end of the last sentence of the theorem:\\
``\emph{so that we can uncolour a set of edges of~$H$ containing at most
  $\frac13\Delta^{9/10}$ edges incident to any vertex $v$ of $H$, to obtain
  a partial edge-colouring of $H$ such that any two coloured edges joined
  by an edge of~$J_1$ receive different colours, and such that any two
  coloured edges joined by an edge of~$J_2$ receive colours that differ by
  at least~$\Delta^{1/4}$.}''

\smallskip\noindent
We call this strengthening Our Theorem. We first show that it implies (the
full version of) Lemma~\ref{lemfec2} and then discuss its proof.

We set $\delta=\half\eps$, let $\beta$ be the corresponding value from
Lemma~\ref{hcd1}, and define $\Delta_{\eps}$ to be $\Delta_{\delta,2\beta}$
(as in Our Theorem). We set $x^{p_\gamma}_e=|L(e)|^{-1}$ for each colour
$\gamma$ and edge $e$ in $H_{\gamma}$, and $x^{p_\gamma}_e=0$ if~$e$ is not
in $H_{\gamma}$. Thus, for each edge~$e$ we have that $\sum_\gamma
x^{p_\gamma}_e=1$. Applying Lemma~\ref{lemkey} together with
Theorem~\ref{mpthm}, we see that each of the edge-vectors $x^{p_\gamma}$ is
in $(1-\delta)\MP(H)$ and hence in $(1-\delta)\MP(H_\gamma)$. Now
Lemmas~\ref{lemhcd7} and~\ref{hcd1} show that there are hard-core
distributions on $H_\gamma$ with marginals $x^{p_\gamma}$ such that, for
every edge~$e$ and colour~$\gamma$, we have
$\lambda^{p_\gamma}(e)\le \beta x^{p_\gamma}_e$. Since, as we saw in
Remark~\ref{rem.lstare2}, for every edge $e$ we have $|L(e)|\ge\half\Delta$
(for $\Delta$ sufficiently large), setting $K=2\beta $, for every edge~$e$
and colour~$\gamma$, we have $\lambda^{p_\gamma}(e)\le\sfrac{K}{\Delta}$.
Hence, conditions (1) and (2) of Our Theorem hold, and applying that result
proves Lemma~\ref{lemfec2}.

\medskip
The key to Kahn's proof of Theorem~\ref{kahnth} above is the following
lemma, \cite[Lemma~3.1]{Kahn00}.

\begin{lemma}[Kahn \cite{Kahn00}]\label{kahn3.1}\mbox{}\\*
  For every $K,\delta>0$, there exist $\xi=\xi_{\delta,K}$ with
  $0<\xi\le\delta$ and~$\Delta_{\delta,K}$ such that the following holds
  for all $\Delta\ge\Delta_{\delta,K}$. Let~$H$ be a multigraph with
  maximum degree at most~$\Delta$, and with a list~$L(e)$ of acceptable
  colours for every edge~$e$. Define the graphs~$H_\gamma$ as before.

  Suppose that for every colour~$\gamma$ we are given a hard-core
  distribution~$p_\gamma$ on the matchings of~$H_\gamma$ with activities
  $\lambda^{p_\gamma}=\lambda_\gamma$ and marginals
  $x^{p_\gamma}=x_\gamma$, satisfying:{

    \smallskip
    \qitee{(1)}for every edge~$e$:
    $\sum\limits_{\gamma\in L(e)}\!x_\gamma(e)>\rme^{-\xi}$, and

    \qitee{(2)}for every colour~$\gamma$ and edge~$e$:
    $\lambda_\gamma(e)\le\sfrac{K}{\Delta}$.}

  \smallskip\noindent
  Then there are matchings~$M_\gamma$ in~$H_\gamma$ for every
  colour~$\gamma$, such that the following holds. If we set
  $H'=H-\bigcup_{\gamma^*}M_{\gamma^*}$ and
  $H'_\gamma=H_\gamma-V(M_\gamma)-\bigcup_{\gamma^*}M_{\gamma^*}$, we form
  a list~$L'(e)$ for every edge $e$ in $H'$ by removing no longer allowed
  colours from~$L(e)$, and we let~$x'_\gamma$ be the marginals
  corresponding to the activities~$\lambda_\gamma$ on~$H'_\gamma$, then we
  have:

  \smallskip
  \qitee{(a)}for every edge~$e$ of~$H'$:
  $\sum\limits_{\gamma\in L'(e)}\!x'_\gamma(e)>\rme^{-\delta}$, and

  \qitee{(b)}the maximum degree of~$H'$ is at most
  $\frac{1+\delta}{1+\xi}\,\rme^{-1}\Delta$.
\end{lemma}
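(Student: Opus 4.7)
The plan is to follow the probabilistic semi-random (``nibble'') method of Kahn~\cite{Kahn00}. Independently for each colour~$\gamma$, sample a matching~$M_\gamma$ from the hard-core distribution~$p_\gamma$, and then argue via the Lopsided Local Lemma (Lemma~\ref{lemlocal}) that with positive probability the resulting $H'$, $H'_\gamma$ and $x'_\gamma$ satisfy both conclusions. The two families of bad events to avoid are
\[
A_e \;=\; \Bigl\{\sum_{\gamma \in L'(e)} x'_\gamma(e) \,\le\, \rme^{-\delta}\Bigr\}\quad\text{for each edge $e$ of $H'$,}
\]
\[
B_v \;=\; \Bigl\{d_{H'}(v) \,>\, \tfrac{1+\delta}{1+\xi}\,\rme^{-1}\,\Delta\Bigr\}\quad\text{for each vertex $v$ of $H$.}
\]

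First I would compute expected values. Since the $M_\gamma$ are independent across~$\gamma$,
\[
E[d_{H'}(v)] \;=\; \sum_{e \ni v} \prod_{\gamma \in L(e)}\bigl(1 - x_\gamma(e)\bigr) \;\le\; \Delta\,\exp\bigl(-\rme^{-\xi}\bigr),
\]
and picking $\xi = \xi_{\delta,K}$ small enough keeps this strictly below $\frac{1+\delta/2}{1+\xi}\,\rme^{-1}\,\Delta$. For an edge $e = vw$ and colour $\gamma \in L(e)$, the new marginal $x'_\gamma(e)$ is a ratio of partition functions of the hard-core distribution with activities $\lambda_\gamma$ on the random subgraph $H'_\gamma$, and is a priori a highly non-local function of all the $M_{\gamma^*}$. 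The crucial input is the Kahn--Kayll independence lemma (Lemma~\ref{hcindth1}): conditioning on matching choices outside a $t$-neighbourhood of $e$ alters marginals by a factor of only $1 \pm \epsilon'$, so $x'_\gamma(e)$ is, up to a $(1 \pm o(1))$ factor, a function of the matchings on edges within distance $t$ of $e$. With this localisation one can show that $E\bigl[x'_\gamma(e)\,\mathbf{1}(\gamma \in L'(e))\bigr] \ge (1 - o(1))\,x_\gamma(e)$, and summing over $\gamma \in L(e)$ yields $E\bigl[\sum_{\gamma \in L'(e)} x'_\gamma(e)\bigr] \ge (1 - o(1))\,\rme^{-\xi} > \rme^{-\delta/2}$ provided $\xi$ is chosen small enough.

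Next I would establish concentration. Both $d_{H'}(v)$ and $\sum_{\gamma \in L'(e)} x'_\gamma(e)$ are (up to the localisation above) functions of the random matching choices on edges within a bounded neighbourhood of $v$ or $e$, and altering a single matching $M_{\gamma^*}$ affects each sum by only $O(1)$, since the marginals $x_\gamma(f)$ are $O(1/\Delta)$ (by condition~2 and Lemma~\ref{hcd1}). Talagrand's inequality then shows that $\Pr(A_e)$ and $\Pr(B_v)$ are each exponentially small in a positive power of $\Delta$. For dependencies, two bad events are independent unless their defining neighbourhoods overlap, so each bad event is dependent on at most $\Delta^{O(t)}$ others. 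Combining super-polynomially small individual probabilities with this polynomial degree of dependency verifies the hypothesis $\rme\,p\,d < 1$ of Lemma~\ref{lemlocal} for $\Delta \ge \Delta_{\delta,K}$, and the existence of the required matchings follows.

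The main obstacle is the expectation calculation for $x'_\gamma(e)$. The new marginal is a ratio of partition functions on a random subgraph, and in principle is affected by every matching choice, so without further input it cannot even be estimated in expectation; the only reason one can say anything precise is Lemma~\ref{hcindth1}, which converts ``approximate local determination'' into usable multiplicative estimates. Coordinating this localisation with the indicator $\mathbf{1}(\gamma \in L'(e))$ (which requires that the endpoints of $e$ remain unsaturated by $M_\gamma$) and then summing cleanly over $\gamma$ while maintaining the concentration guarantees is the delicate technical heart of the argument, and is where essentially all of the remaining work in the paper must go.
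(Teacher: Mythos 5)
The paper does not prove this lemma but cites it as Lemma~3.1 of Kahn~\cite{Kahn00} and only summarises his strategy: sample random matchings from the hard-core distributions, define bad events $T_v$ (degree fails to drop) and $T_e$ (marginal sum drops too far), localise via the Kahn--Kayll independence result (Lemma~\ref{hcindth1}), and apply the Lopsided Local Lemma with a $\Delta^{-\omega(1)}$ conditional failure probability against a $\Delta^{O(t)}$ dependency degree. Your proposal reconstructs exactly this argument, including the correct expectation estimate for $d_{H'}(v)$, the role of Talagrand-type concentration, and the correct identification of the non-locality of $x'_\gamma(e)$ as the technical crux to be resolved by Lemma~\ref{hcindth1}; it is consistent with the proof the paper is pointing to.
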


\noindent
Here is a sketch of how we may use Lemma~\ref{kahn3.1} to prove
Theorem~\ref{kahnth}, following Kahn. First fix a suitable number $s$ of
iterations, where we take $s=\lceil\log(8K)\rceil$. Let $\delta_s=1$, and
define $\delta_{s-1}\ge\cdots\ge\delta_1>0$ by setting
$\delta_{i-1}=\xi_{\delta_i,K}$. Also, let $\delta_0=0$ and
$\Delta^*=\Delta_{\delta_1,K}$. We start with a multigraph $H$ with
$\Delta\ge\rme^s\Delta^*$, and with lists of acceptable colours and
distributions satisfying the hypotheses in Theorem~\ref{kahnth}. For
$i=0,1,\ldots,s$ let $\Delta_i=(1+\delta_{i})\rme^{-i}\Delta$ (so
$\Delta_0=\Delta$ and each $\Delta_i\ge\Delta^*$). Set $H^0=H$, and
$H^0_\gamma=H_\gamma$ for all~$\gamma$. Once we have obtained~$H^{i-1}$
and~$H^{i-1}_\gamma$, in iteration~$i$ we do the following.{

  \smallskip
  \qitee{I.} Choose matchings $M^i_\gamma$ in~$H^{i-1}_\gamma$ (for each
  colour $\gamma$) according to the lemma, with $\delta$ as $\delta_{i-1}$
  and $\Delta$ as $\Delta_{i-1}$.

  \smallskip
  \qitee{II.} For each edge $e$ in some matching $M^i_\gamma$, chose
  $\gamma$ independently and uniformly at random from those $\gamma$ for
  which $e\in M^i_\gamma$, and assign colour $\gamma$ to $e$.

  \smallskip
  \qitee{III.} Form~$H^i$ by removing from~$H^{i-1}$ all edges that were
  assigned a colour in step II. For each colour $\gamma$, form~$H^i_\gamma$
  by removing from~$H^{i-1}_\gamma$ all edges that were assigned some
  colour in step II, and all vertices that are incident to any edge that
  was assigned colour~$\gamma$  in step~II.

}\smallskip\noindent
The key point is that the lemma ensures that if its hypotheses hold for
$H^{i-1}$ with $\delta$ as $\delta_{i-1}$ and~$\Delta$ as $\Delta_{i-1}$,
then they hold for $H^i$ with $\delta$ as $\delta_i$ and $\Delta$ as
$\Delta_i$. So we can indeed follow Kahn and iteratively apply the lemma in
this way for $s$ iterations, and colour all but an uncoloured subgraph with
maximum degree at most $\Delta_s=2\rme^{-s}\Delta$, which is at most
$\sfrac{\Delta}{4K}$ by our choice of~$s$.

On the other hand, in the last iteration we still have that for every edge
$e$, the sum of the marginals at $e$ is near 1. Furthermore, we are using
the same activities, so by condition~(2) of the theorem, for each $\gamma$
we have $\lambda^\gamma(e)\le\sfrac{K}{\Delta}$. But since the
distributions are hard-core, $x^{p_\gamma}(e)\le\lambda_\gamma(e)$. (To see
this, observe that
\[x^{p_\gamma}(e)\:=\: \sum_{M:\,e\in M} p(M)\:=\:
\lambda_{\gamma}(e)\!\sum_{M:\,e\in M}\!p(M\sm\{e\})\:\le\:
\lambda_{\gamma}(e),\]
where the sums are over matchings $M$ in $H_{\gamma}$ containing $e$.)
Taken together this implies that~$|L(e)|$ is near $\sfrac{\Delta}{K}$ and
exceeds $\sfrac{\Delta}{2K}$. Hence, we can finish off the colouring
greedily.

This proof is given in \cite[Section~3]{Kahn00}, and is fairly easy to
extract from what is actually written there.

We shall modify this proof to obtain a proof of Our Theorem as follows. To
deal with the conflicts caused by $J_1$ and $J_2$, we choose to uncolour
the conflicting edge which was coloured last, uncolouring both edges if
they were coloured in the same iteration. We need to ensure that the number
of edges of $H$ incident to any given vertex of $H$ which need to be
recoloured due to these conflicts is less than $\frac13\Delta^{9/10}$.

To this end, we shall modify the statement of Lemma~\ref{kahn3.1}, but
first we introduce some notation. In each iteration, for each edge~$e$
of~$H$, we let~$F(e)$ be the set of colours forbidden on~$e$, either
because they were assigned to a neighbour in $J_1$ in a previous iteration,
or because they are too close to a colour assigned to a neighbour in $J_2$
in a previous iteration. For each vertex~$v$ of~$H$, we let~$X_v$ be the
number of edges~$e$ of~$H$ which are assigned a colour $\gamma$ in this
iteration such that: $\gamma \in F(e)$, or $\gamma$ is assigned in this
iteration to a neighbour of~$e$ in~$J_1$, or~$\gamma$ is
within~$\Delta^{1/4}$ of a colour assigned in this iteration to a neighbour
of~$e$ in~$J_2$. For technical reasons, we count in $X_v$ conflicts
involving all colours assigned to edges in this iteration, not just the
colours we finally choose to colour them.

We will use the variant of Lemma~\ref{kahn3.1} in which we add:

\smallskip
\qitee{(i)}At the end of its first paragraph:\\
``\emph{Let $\tilde{\Delta}=8K\Delta$. Suppose further that we have a
  list~$F(e)$ of at most~$3 \tilde{\Delta}^{1/2}$ colours for every
  edge~$e$, and graphs~$J_1$ and~$J_2$ on $E(H)$, where~$J_1$ has degree at
  most~$\tilde{\Delta}^{1/2}$ and~$J_2$ has degree at
  most~$\tilde{\Delta}^{1/4}$, and that every $L(e)$ has at most
  $2\tilde{\Delta}=16K\Delta$ elements.}''

\smallskip
\qitee{(ii)}At the very end an extra new conclusion:\\
``\emph{\,(c) \; for every vertex~$v$, $X_v\le\Delta^{4/5}$.}''

\smallskip\noindent
We call this variant Our Lemma. Since in proving Our Theorem we need only
apply it when the maximum degree bound for $H_i$ is between $\Delta$ and
$\sfrac{\Delta}{8K}$, we see that we will always have the desired upper
bound on the sizes of the lists, by applying the upper bound in Our
Theorem. Also, since we carry out a constant number of iterations, Our
Lemma tells us we need to uncolour only $O(\Delta^{4/5})$ edges of $H$
which are incident with a specific vertex of $H$. So, provided $\Delta$ is
large enough we can use Our Lemma to obtain Our Theorem, just as Kahn used
Lemma \ref{kahn3.1} to prove his main theorem.

\subsubsection{Proving Our Lemma}

It remains to describe how to modify the proof of Lemma \ref{kahn3.1} to
obtain a proof of Our Lemma.

Kahn proves Lemma~\ref{kahn3.1} by applying the Local Lemma to an
independent family of random matchings obtained by, for each $\gamma$,
independently choosing a random matching $M_{\gamma}$ according to the
hard-core distribution $p_\gamma$. By doing so, he shows that he can avoid
a set of bad events.
 
The \emph{bad events} which he avoids by applying the Local Lemma are
defined in the middle of \cite[page~136]{Kahn00}. There are two kinds: an
event~$T_v$ such that its non-occurrence guarantees the degree of a
vertex~$v$ drops sufficiently, and an event~$T_e$ such that its
non-occurrence ensures that the marginals at an edge~$e$ of the hard-core
distribution for the next iteration sum to a number close to~1.

Kahn defines a distance $t>1$ which is a function of~$\delta$ and~$K$ (and
independent of~$\Delta$), and shows that the probability that a bad event
occurs, \emph{given all the edges of every matching $M_{\gamma}$ which are
  at distance at least~$t$ in~$H$ from the vertex or edge indexing the
  event}, is at most~$p$, for some~$p$ which is~$\Delta^{-\omega(1)}$.

He can then apply the Local Lemma, where the set~$\mathcal{S}_{T_z}$ ($z$ a
vertex or an edge) is the set of events indexed by an edge or vertex within
distance~$2t$ of~$z$ (this is done on \cite[pages~136--137]{Kahn00}). The
key point is that this set has size at most $d=2(\Delta+1)\Delta^{2t}$, so
we have $\rme pd=o(1)$.

(A few remarks: Kahn uses~$D$ where we use~$\Delta$, and
$\Delta_1+\Delta_2$ where we use~$t$. The result we have just stated is
\cite[Lemma~6.3]{Kahn00}; the~$\omega(1)$ here is with respect
to~$\Delta$.)

To modify this proof to obtain Our Lemma, we introduce for each vertex~$v$
of~$H$, a new bad event~$T'_v$ that~$X_v$ exceeds~$\Delta^{4/5}$. In each
iteration, along with insisting that all the~$T_e$ and~$T_v$ fail, we also
insist that all the~$T'_v$ fail. In doing so we use the following claim.
For each vertex~$v$ of $H$, let $E^+(v)$ denote the set of edges of $H$
consisting of the edges $e$ incident to $v$ together with the edges
adjacent in $J_1$ or $J_2$ to edges $e$ incident to $v$.

\begin{claim}\label{clai}\mbox{}\\*
  Let $v\in V(H)$. For every colour $\gamma$, let $L_{\gamma}$ be a given
  matching in $H_{\gamma}$, and suppose that the event~$A$ that
  $M_{\gamma}\setminus E^+(v)= L_{\gamma}$ for each $\gamma$ satisfies
  $\pr(A)>0$. Then $\pr(T'_v\mid A)$ is $\Delta^{-\omega(1)}$.
\end{claim}

\noindent
Given the claim, to prove our variant of the lemma, we can use the Local
Lemma, just as Kahn did. However, we have to use a slightly different
dependency graph because the event $T'_v$ depends on the neighbours of $v$
in $J_1$ and $J_2$. Given an event $U$ of the form $T_x$ or $T'_x$ indexed
by a vertex or edge $x$, we let the set $S_U$ consist of all the events
indexed by some $y$ at distance at most $4t$ from $x$ in the graph $H^+$
formed by the union of $H^*$, $J_1$ and $J_2$ (where we identify edges of
$H$ and the vertices of $H^*$ to which they correspond). Note that this
graph has maximum degree at most $2\Delta$.

Just as with the other events, we have a $\Delta^{-\omega(1)}$ bound on the
probability that any event~$T'_v$ holds, given the choice of all the
matching edges at a suitable distance from $v$ in $H^+$ (by applying our
claim to all the choices of~$L_\gamma$ which extend this choice). Also, we
need not worry further about the events $T_v$ and $T_e$. We can therefore
apply the Local Lemma iteratively as in the last section to prove Our
Lemma.

\begin{proof}{\bf\!\!of Claim~\ref{clai}} \ To prove the claim we first
  bound the conditional expected value of~$X_v$. We consider each edge~$e$
  incident to~$v$ separately. We show that the conditional probability
  that~$e$ is in a conflict is $O(\Delta^{-1/2})$. Summing up over all
  edges~$e$ incident to~$v$ yields that the expected value of~$X_v$ is
  $O(\Delta^{1/2})$. We prove this bound for the conflicts involving edges
  coloured in a previous iteration and edges coloured in this iteration
  separately.

  To begin we consider the colours in $F(e)$. We actually show that for any
  edge~$e$, the conditional probability that~$e$ is assigned a colour from
  $F(e)$, given, for each colour~$\gamma$, a matching~$N_\gamma$ not
  containing $e$ such that~$M_\gamma$ is either~$N_\gamma$ or~$N_\gamma+e$,
  is $O(\Delta^{-1/2})$. (We use $N_\gamma+e$ to denote
  $N_\gamma\cup\{e\}$.) Summing up over all the choices for the~$N_\gamma$
  which extend the~$L_\gamma$, then yields the desired result.
  If~$N_\gamma$ contains an edge incident to~$e$, then $N_{\gamma}+e$ is
  not a matching, so $M_\gamma=N_\gamma$. Otherwise, by the definition of a
  hard-core distribution:
  \[\pr(e\in M_{\gamma}\mid M_{\gamma}\in\{N_{\gamma},N_{\gamma}+e\})\:=\:
  \frac{\lambda_\gamma(e)}{1+\lambda_\gamma(e)}\:\le\:
  \lambda_\gamma(e)\:\le\:\frac{K}\Delta.\]
  The conditional probability we want to bound is the sum over all
  colours~$\gamma$ in $F(e)$ of the conditional probability that~$e$ is
  coloured~$\gamma$. For each of these colours, the conditional probability
  that a conflict actually occurs is at most the conditional probability
  that~$e$ is in~$M_\gamma$. Since this is $O(\Delta^{-1})$, and
  $|F(e)|\le3\tilde{\Delta}^{1/2}$, the desired bound follows.

  We next consider conflicts due to both $e$ and a neighbour $f$ in
  $J_1\cup J_2$ being assigned the same colour in this iteration. It is
  enough to show that the conditional probability that~$e$ is assigned the
  same colour as any such uncoloured neighbour $f$ is $O(\Delta^{-1})$. We
  actually show that for any such edge~$f$, the conditional probability
  that~$e$ is assigned the same colour as~$f$, given, for each
  colour~$\gamma$, a matching~$N_\gamma$ not containing $e$ or $f$ such
  that~$M_\gamma$ is in the set
  $\mathcal{N}_{\gamma}^+=
  \{N_\gamma,N_\gamma+e,N_\gamma+f,N_\gamma+e+f\}$, is $O(\Delta^{-1})$.
  Summing up over all the choices for the~$N_\gamma$ which extend
  the~$L_\gamma$, then yields the desired result. We obtain our bound on
  the probability that~$e$ and~$f$ are both assigned the same colour by
  summing the probability they both get a specific colour~$\gamma$ over all
  the at most $2\tilde{\Delta}$ colours in~$L(e)$. For each such colour, as
  in the previous paragraph, we obtain that
  \[\pr(\{e,f\}\subseteq M_{\gamma}\mid M_{\gamma}\in
    \mathcal{N}^+_{\gamma})\:\le\: \lambda_\gamma(e)\lambda_\gamma(f)\:
  \le\:\Bigl(\dfrac{K}\Delta\Bigr)^2.\]
  Summing over our choices for~$\gamma$ yields the desired result.

  If~$f$ is adjacent to~$e$ in~$J_2$, then having picked a colour~$\gamma$
  in $L(e)$ we have at most $2\tilde{\Delta}^{1/4}$ choices for a
  colour~$\gamma'\ne\gamma$ on~$f$ that causes a conflict. Proceeding as
  above with respect to~$\gamma'$ as well as $\gamma$, we can show that the
  conditional probability that $e$ is coloured~$\gamma$ is at
  most~$\sfrac{K}{\Delta}$, and the conditional probability that~$f$ is
  coloured~$\gamma'$, given that $e$ is coloured~$\gamma$, is at
  most~$\sfrac{K}{\Delta}$. Thus the conditional probability that $e$ is
  coloured~$\gamma$ and~$f$ is coloured~$\gamma'$ is at most
  $\bigl(\sfrac{K}{\Delta}\bigr)^2$. Summing over the at most
  $2\tilde{\Delta}$ choices for $\gamma$, the corresponding choices for
  $\gamma'$, and the at most~$\tilde{\Delta}^{1/4}$ choices for $f$, we
  obtain the desired result.

  We next bound the probability that~$X_v$ exceeds $\Delta^{4/5}$, by
  showing that it is concentrated. We note that if we change the choice of
  one~$M_\gamma$, leaving all the other random matchings unchanged, then
  the only new~$J_1$ or~$J_2$ conflicts counted by~$X_v$ involve edges
  coloured with a colour within~$\tilde{\Delta}^{1/4}$ of~$\gamma$. There
  are at most $2\tilde{\Delta}^{1/4}+1$ such edges incident to~$v$. Thus,
  such a change can change~$X_v$ by at most $2\tilde{\Delta}^{1/4}+1$.
  Furthermore, each conflict involves at most two of the matchings (only
  one if it also involves a previously coloured vertex). So, to certify
  that there were at least~$x$ conflicts involving edges incident to~$v$ in
  an iteration we need only produce at most~$2x$ matchings involved in
  these conflicts. It follows by a result of Talagrand~\cite{Tal95} (see
  also \cite[Chapter~10]{MoRe02}) that the probability that~$X_v$ exceeds
  its median~$M$ by more than~$t$ is at most
  \[\exp\Bigl(-\Omega\dfrac{t^2}{\Delta^{1/2}M}\Bigr).\]
  Since the median of~$X_v$ is at most twice its expectation, setting
  $t=\half\Delta^{4/5}$ yields the desired result.

  This completes the proof of the claim, and hence of Our Lemma.
\end{proof}

\noindent
Now that Our Lemma has been proved, we can deduce Our Theorem,
Lemma~\ref{lemfec2} and Lemma~\ref{lemfec}. All that remains is to prove
Lemma~\ref{lemnotover2}, which we will do now.

%%%%%%%%%%%%%%%%%%%%%%%%%%%%%

\subsection{The Final Stage: Deriving Lemma \ref{lemnotover2}}

With Lemma~\ref{lemfec} in hand, it is an easy matter to prove
Lemma~\ref{lemnotover2}. In doing so we consider the natural bijection
between the core~$R$ of~$H^*$ and~$E(H)$, referring to these objects using
whichever terminology is convenient. (We sometimes use both names for the
same object in the same sentence.) Similarly, we use the same letter to
denote a vertex of $H$ and the corresponding vertex of $G$.

Before we really start, we make one observation concerning degrees. For a
vertex~$v$ in~$H$, the condition in Lemma~\ref{lemnotover2}, taking
$X=\{v\}$, gives $d_{G-R}(v)-d_H(v)\le\frac1{30}\eps\Delta$. Since
$d_{G-R}(v)=d_G(v)-d_H(v)$, this means that
$d_H(v)\ge\half d_G(v)-\frac1{60}\eps\Delta$, and hence
\[d_G(v)-d_H(v)\:\le\: \half d_G(v)+\tfrac1{60}\eps\Delta\:\le\:
\bigl(\half+\tfrac1{60}\eps\bigr)\Delta.\]
This bound will guarantee that all the lists of colours we will consider
below are not empty.

Starting with $J_1$ and $J_2$ empty, for every two vertices~$x,y$ from~$R$,
if~$x$ and~$y$ are adjacent in~$G$, we add the edge~$xy$ to~$J_2$, and
if~$x$ and~$y$ are adjacent in~$G^2$, but do not correspond to incident
edges in~$H$, then we add the edge~$xy$ to~$J_1$. Since vertices in~$R$
have degree at most~$\Delta^{1/4}$ in~$G$, we get the required bounds on
the degree for vertices in~$J_1$ and~$J_2$ in Lemma~\ref{lemfec}.

Now first suppose that every vertex~$v$ in~$H$ has degree~$\Delta$ in~$G$.
Recall the definition of~$\lstar_e$ in equation~\eqref{eqn.lstaredef}. For
an edge $e=vw$ in~$H$, set~$L'(e)$ to be a subset of $\lstare$ colours
in~$L(e)$ which appear on no vertex of $V\sm R$ which is a neighbour
of~$e^*$ in~$G^2$ and are not within~$\Delta^{1/4}$ of any colour appearing
on a neighbour of~$e^*$ in~$G$. This is possible because $e^*$ is adjacent
in~$G^2$ to at most $(\Delta-d_H(v))+(\Delta-d_H(w))$ neighbours of~$v^*$
and~$w^*$ in $V-R$, and at most~$\Delta^{1/2}$ other vertices of $V-R$
(since the vertex $e^*$ in~$G$ is removable, hence has at
most~$\Delta^{1/4}$ neighbours other than~$v^*$ and $w^*$, and all these
vertices have degree at most~$\Delta^{1/4}$). Finally, condition~(2) in
Lemma~\ref{lemfec} holds because of the corresponding condition for all
sets~$X$ in the statement of Lemma~\ref{lemnotover2}. So applying
Lemma~\ref{lemfec}, we are done in this case.

In general this approach does not work because for a vertex~$v$ of~$H$ with
degree less than~$\Delta$, we do not have that $\Delta-d_H(v)$ is equal to
the number of edges from~$v$ to $V-R$, so our two conditions are not quite
equivalent. In order to fix this, we use a simple trick. Form~$\ov{G}$ by
taking two disjoint copies~$\cpa{G}$ and~$\cpb{G}$ of~$G$, with
corresponding copies $\cpi{H}$, $\cpi{R}$, $\cpi{J_1}$,~$\cpi{J_2}$,
$i=1,2$, and copy the lists of colours on the vertices of $G$ to the two
copies of these vertices. For each vertex~$v$ of~$H$, we add
$\Delta-d_G(v)$ subdivided edges between its two copies~$\cpa{v}$
and~$\cpb{v}$. Give an arbitrary list of
$\bigl\lceil\bigl(\frac32+\eps\bigr)\Delta\bigr\rceil$ colours to the
vertices at the middle of these new subdivided edges.

Let~$\ov{H}$ be the multigraph formed by the union of~$\cpa{H}$
and~$\cpb{H}$ together with multiple edges corresponding to the new
subdivided edges between copies of vertices of~$H$. Similarly,
take~$\ov{R}$ the union of $\cpa{R}$,~$\cpb{R}$ and all vertices in the
middle of the new subdivided edges, and set
$\ov{J_i}=\cpa{J_i}\cup\cpb{J_i}$ for $i=1,2$. Note that the degrees
in~$\ov{J_1}$ and~$\ov{J_2}$ haven't changed, so we can still use them in
Lemma~\ref{lemfec}.

Recall that for $i\in\{1,2\}$ and all $v\in\cpi{H}$, we have
$\Delta-d_{\ov{H}}(v)=d_G(v)-d_{\cpi{H}}(v)$. Now we choose lists of
colours on the edges of~$\ov{H}$. Each new edge $\cpa{v}\cpb{v}$ gets an
arbitrary list of
$\bigl\lceil\bigl(\frac32+\eps\bigr)\Delta-
(\Delta-d_{\ov{H}}(\cpa{v}))-(\Delta-d_{\ov{H}}(\cpb{v}))-
3\Delta^{1/2}\bigr\rceil=
\bigl\lceil\bigl(\frac32+\eps\bigr)\Delta-2(d_G(v)-d_H(v))-
3\Delta^{1/2}\bigr\rceil$ colours from the
$\bigl\lceil\bigl(\frac32+\eps\bigr)\Delta\bigr\rceil$ colours we gave on
the vertex in the middle of it. On the two copies of an edge $e=vw$ of~$H$
we take the same list of
$\bigl\lceil\bigl(\frac32+\eps\bigr)\Delta-(\Delta-d_{\ov{H}}(v))-
(\Delta-d_{\ov{H}}(w))-3\Delta^{1/2}\bigr\rceil$ colours. Since this is
equal to
$\bigl\lceil\bigl(\frac32+\eps\bigr)\Delta-(d_G(v)-d_{H}(v))-
(d_G(w)-d_H(w))-3\Delta^{1/2}\bigr\rceil$, we can still choose this list to
be disjoint from the colours used on the neighbours of this edge in
$G^2\!-\!R$.

We note that if we can find a proper colouring of~$L(\ov{H})$ using the
chosen lists which avoids conflicts, then we get two (possibly identical)
extensions of our colouring of~$G^2-R$ to~$G^2$. We apply
Lemma~\ref{lemfec} to prove that we can indeed find such an acceptable
colouring. To do so, we only need to show that for every odd set~$X$ of
vertices of~$\ov{H}$, we have
\[\sum_{v\in X}(\Delta-d_{\ov{H}}(v))-e(X,V(\ov{H})\sm X)\:\le\:
\tfrac1{30}\eps|X|\Delta.\]
In fact, we will do this for all subsets~$X$ of $V(\ov{H})$. We set
$\cpi{X}=X\cap V(\cpi{H})$, $i=1,2$. We immediately get that
$e(X,V(\ov{H})\sm X)\:\ge\:e(\cpa{X},V(\cpa{H})\sm\cpa{X})+
e(\cpb{X},V(\cpb{H})\sm\cpb{X})$ (since on the right hand right we are
ignoring the edges between the two copies of~$H$). Recall that
$\Delta-d_{\ov{H}}(v)=d_{G-H}(v)$ for a vertex~$v$ in~$\ov{H}$. Using the
condition in Lemma~\ref{lemnotover2} for the two copies of~$H$, this gives
\begin{align*}
  \sum_{v\in X}(\Delta-d_{\ov{H}}(v)&)-e(X,V(\ov{H})\sm X)\\
  {}\le\:{}&\sum_{v\in\cpa{X}}d_{G-H}(v)+\sum_{v\in\cpb{X}}d_{G-H}(v)\\
  &\qquad{}-e(\cpa{X},V(\cpa{H})\sm\cpa{X})-
  e(\cpb{X},V(\cpb{H})\sm\cpb{X})\\
  {}\le\:{}&\tfrac1{30}\eps|\cpa{X}|\Delta+
  \tfrac1{30}\eps|\cpb{X}|\Delta\:=\: \tfrac1{30}\eps|X|\Delta,
\end{align*}
and we are done.\proofend

%%%%%%%%%%%%%%%%%%%%%%%%%%%%%
%%%%%%%%%%%%%%%%%%%%%%%%%%%%%

\section{Conclusions and Discussion}

In this paper, we showed that the chromatic number $\chi(G^2)$ of the
square of a graph~$G$ from a fixed nice family is at most
$\bigl(\frac32+o(1)\bigr)\Delta(G)$. But many questions remain.

One can prove a bound of constant times the maximum degree for the
chromatic number of the square of a graph from a minor-closed family.
Krumke \emph{et al}.~\cite{KMR98} showed that if a graph~$G$ is
\textit{$q$-degenerate} (there exists an ordering $v_1,v_2,\dots,v_n$ of
the vertices such that every~$v_i$ has at most~$q$ neighbours in
$\{v_1,\dots,v_{i-1}\}$), then its square is $((2q-1)\Delta(G))$-degenerate
--- the same ordering does the job. But for every minor-closed
family~$\MF$, there is a constant~$C_\MF$ such that every graph in~$\MF$ is
$C_\MF$-degenerate (see Theorem~\ref{adbounded} and the first paragraph of
Section~\ref{seceuler}). Hence~$G^2$ is $((2C_\MF-1)\Delta(G))$-degenerate
for every $G\in\MF$ and so its list chromatic number is at most
$(2C_\MF-1)\Delta(G)+1$.

But it is unlikely that this is the best possible bound.

\begin{question}\mbox{}\\*
  For a given minor-closed family~$\MF$ graphs (not the set of all graphs),
  what is the smallest constant~$D_\MF$ so that
  $\chi(G^2)\le(D_\MF+o(1))\Delta(G)$ for all $G\in\MF$?
\end{question}

\noindent
The following examples show that for~$\MF$ the class of
$K_{4,4}$-minor-free graphs we must have $D_\MF\ge2$. Let $V_1,\ldots,V_4$
be four disjoint sets of~$m$ vertices, and let
$X=\{x_{12},x_{13},x_{14},x_{23},x_{24},x_{34}\}$ be a further six
vertices. Let~$G_m$ be the graph with vertex set
$X\cup V_1\cup\cdots\cup V_4$, and edges between any~$x_{ij}$ and all
vertices in $V_i\cup V_j$, $1\le i<j\le4$. It is easy to check that~$G_m$
is $K_{4,4}$-minor-free. For $m\ge2$ we have
$\Delta(G_m)=d_{G_m}(x_{ij})=2m$. Moreover, all vertices in
$V_1\cup\cdots\cup V_4$ are adjacent in~$G_m^2$, and hence
$\chi(G_m^2)\ge4m=2\Delta(G_m)$. (Of course, $G_m$ has $K_{3,m}$ as a
minor, so we do not have a contradiction to Theorem~\ref{mt}.)

It is easy to generalise these examples to show that for~$\MF$ the class of
$K_{k,k}$-minor-free graphs, $k\ge3$, we must have $D_\MF\ge\half k$.

But even for nice classes of graphs, many open problems remain. Our proof
of the upper bound on the (list) chromatic number does not provide an
efficient algorithm. So, for a nice family~$\MF$, it would be interesting
to find an efficient algorithm to find a colouring of the square of a graph
$G\in\MF$ with at most $\bigl(\frac32+o(1)\bigr)\Delta(G)$ colours.

Moreover, our result suggests that Wegner's Conjecture(see
Conjectures~\ref{conj.W1} and~\ref{conj.W2}) should be generalised to nice
families of graphs and to list colouring.

\begin{conjecture}\label{general}\mbox{}\\*
  Let~$\MF$ be a nice family of graphs. Then for any graph $G\in\MF$
  with~$\Delta(G)$ sufficiently large,
  $\chi(G^2)\le\ch(G^2)\le
  \bigl\lfloor\frac32\Delta(G)\bigr\rfloor+1$
\end{conjecture}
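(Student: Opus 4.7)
The plan is to argue by minimum counterexample, sharpening the reductions of Section~\ref{sketch} so that the error term in Theorem~\ref{mt} is eliminated once $\Delta$ is large. Fix a nice family $\MF$ with constant $\beta_\MF$, and suppose for a contradiction that $G \in \MF$ has maximum degree $\Delta$ (large), lists $L(v)$ of size $\lfloor \frac{3}{2}\Delta \rfloor + 1$, and no proper list colouring of $G^2$, while being vertex-minimal with this property. The goal is to locate a reducible configuration in $G$, contradicting minimality.

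First I would strengthen Lemma~\ref{lemeuler} to produce exactly-reducible configurations. The easiest reduction is a vertex $v$ with $d_{G^2}(v) \le \lfloor \frac{3}{2}\Delta \rfloor$: delete $v$, list-colour by induction, extend greedily. A direct degree count shows that a removable vertex with at most two neighbours of degree exceeding $\frac{1}{2}\Delta$ already falls under this reduction, so in a counterexample almost every removable vertex must resemble a \emph{Wegner vertex}, namely a degree-$2$ vertex between two vertices of degree close to $\Delta$, matching the extremal graphs $G_k$ of Figure~\ref{tightfig}. The subdivided-multigraph reduction of Lemma~\ref{lemnotover} would then need to be sharpened by replacing Kahn's asymptotic list-edge-colouring machinery with an exact analogue: one needs a bound of the form $\mathit{ch}(L(H)) \le \lfloor \frac{3}{2}\Delta \rfloor + 1$ minus the number of colours blocked from outside $R$, holding for all multigraphs $H$ arising as the core of a nearly-Wegner configuration.

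Second I would apply a discharging argument tailored to niceness. Assign each vertex an initial charge depending on $d_G(v)$, and design rules that push charge from ``high-degree'' vertices (those of degree close to $\Delta$) towards the Wegner vertices they create. The absence of reducible configurations should force each high-degree vertex to shed strictly more charge than it can carry, contradicting the total charge bound derived from the niceness inequality $e(A,B) \le \beta_\MF |B|$. The extremal graphs $G_k$ would lie exactly on the boundary of this discharging scheme, confirming tightness.

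The hard part, and the reason this is posed as a conjecture rather than a theorem, is the sharpness of the constant $\lfloor \frac{3}{2}\Delta \rfloor + 1$. Every probabilistic tool used in the present paper --- Kahn's algorithm, the Lopsided Local Lemma, and Talagrand's inequality --- introduces additive error terms of order $\Delta^{1-c}$ that cannot be iterated away. Any proof of the conjecture must therefore replace the asymptotic final stage of Section~\ref{secnotover} with a deterministic structural analysis of the ``tight'' part of $G$, that is, of the near-cliques of size $\frac{3}{2}\Delta+1$ in $G^2$ around Wegner vertices, and in particular control how two such near-cliques can share high-degree vertices. This would likely require an exact version of Kahn's theorem on the list chromatic index, together with a complete catalogue of unavoidable configurations in nice graphs --- neither of which is currently available.
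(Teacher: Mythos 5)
This statement is Conjecture~\ref{general}, which the paper does not prove --- it appears in the ``Conclusions and Discussion'' section as an open problem, motivated by Theorem~\ref{mt} and supported only by the special case of $K_4$-minor-free graphs due to Lih, Wang and Zhu~\cite{LWZ03} and Hetherington and Woodall~\cite{HW07}. There is therefore no ``paper's own proof'' to compare against, and your proposal is in fact a roadmap rather than a proof, which you acknowledge explicitly in your final paragraph.

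Your diagnosis of the obstruction is accurate: the $o(\Delta)$ slack in Theorem~\ref{mt} is baked into every probabilistic ingredient --- Kahn's iterative matching argument (Theorem~\ref{kahnth} and Lemma~\ref{kahn3.1}), the Lopsided Local Lemma, and the Talagrand concentration step in Claim~\ref{clai} --- and none of these can be sharpened to an exact bound of $\lfloor\frac32\Delta\rfloor+1$ by simply tightening parameters. Your observation that a minimum counterexample forces nearly every removable vertex to be a ``Wegner vertex'' (degree~$2$, wedged between two vertices of degree close to~$\Delta$) is consistent with the extremal examples~$G_k$, and a discharging argument calibrated to the niceness inequality $e(A,B)\le\beta_\MF\,|B|$ is a plausible framework. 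Two caveats are worth flagging. First, the known resolved case ($K_4$-minor-free graphs) is proved by a direct structural/discharging argument that makes no use of Kahn-style machinery, so it is not clear that any ``exact version of Kahn's theorem'' is the right tool even for the final stage; the successful special case suggests bypassing line-graph colouring altogether. Second, the reduction in Lemma~\ref{lemrem} adds edges when contracting removable vertices, and keeping the maximum degree of the reduced graph at exactly~$\Delta$ (rather than $\Delta+O(\Delta^{1/2})$) is a genuine difficulty for a minimum-counterexample argument that must preserve the parameter $\lfloor\frac32\Delta\rfloor+1$ exactly under recursion; your sketch does not address this. These are the concrete reasons the statement remains a conjecture, and your proposal is best read as a correct identification of what is missing rather than as progress toward supplying it.
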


\noindent
The results of Lih, Wang and Zhu~\cite{LWZ03} and Hetherington and
Woodall~\cite{HW07} show that the conjecture is true when~$\MF$ is the
family of $K_4$-minor-free graphs.

As $\omega(G^2)\le\chi(G^2)$, our result implies
$\omega(G^2)\le\bigl(\frac32+o(1)\bigr)\Delta(G)$ for $G$ in a nice family.
But does there exist a simple proof showing this inequality?

Hell and Seyffart~\cite{HS93} proved that for $\Delta\ge8$, a planar graph
with maximum degree~$\Delta$ and diameter two has at most
$\bigl\lfloor\frac32\Delta\bigr\rfloor+1$ vertices. Using their proof
techniques it is not so hard to show that, for sufficiently large~$\Delta$,
a planar graph~$G$ with maximum degree~$\Delta$ satisfies
$\omega(G^2)\le\bigl\lfloor\frac32\Delta\bigr\rfloor+1$. (In fact, with
some effort we expect this bound can be proved for all $\Delta\ge8$.) Note
that this last inequality is tight as shown by the examples of
Figure~\ref{tightfig}.

Corollary~1.7 in~\cite{AEH2013} says that, for each fixed surface $S$,
there is a constant $c_S$ such that $\omega(G^2)\le\frac32\Delta(G)+c_S$
for each graph $G$ embeddable in $S$. More generally, can we prove that,
for each nice family~$\MF$, there is a constant $c_{\MF}$ such that
$\omega(G^2)\le\frac32\Delta(G)+c_{\MF}$ for each graph $G$ in~$\MF$? Can
we take $c_{\MF}=1$ for sufficiently large $\Delta$ (depending on $\MF$)?

A major part of the proof of our result is a reduction to list
edge-colouring of line graphs. For edge-colourings, Kahn~\cite{Kahn00}
proved that asymptotically the list chromatic number equals the fractional
chromatic number. This may suggest that the same could be true for squares
of planar graphs, or more generally for squares of graphs of a nice family.

\begin{problem}\mbox{}\\*
  Given a nice family $\MF$ of graphs, is it true that
  $\ch(G^2)=(1+o(1))\chi_f(G^2)$ for $G\in\MF$?
\end{problem}

\noindent
As already mentioned in the introduction, we believe that for every square
of a planar graph the list chromatic number equals the chromatic number.

\begin{conjecture}\mbox{}\\*
  If $G$ is a planar graph, then $\ch(G^2)=\chi(G^2)$.
\end{conjecture}

\noindent
Since there are graphs $G$ for which $\ch(G^2)>\chi(G^2)$, a natural
problem is to determine the best possible upper bound on $\ch(G^2)$ in
terms of $\chi(G^2)$ for general graphs. Since
$\Delta(G)+1\le\chi(G^2)\le\ch(G^2)\le\Delta(G)^2+1$, we trivially have
$\ch(G^2)\le(\chi(G^2))^2$. However, this trivial quadratic upper bound is
certainly not best possible. Kosar \emph{et al}.~\cite{KSRY14} posed the
following question.

\begin{problem}[Kosar, Petrickova, Reiniger, and
  Yeager~\cite{KSRY14}]\mbox{}\\*
 Is there a function $f(k)=o(k^2)$ such that for every graph $G$,
 $\ch(G^2)\le f(\chi(G^2))$?
\end{problem}

\noindent
They also formulated the following more specific question.

\begin{problem}[Kosar, Petrickova, Reiniger, and
  Yeager~\cite{KSRY14}]\label{pr:kos}\mbox{}\\*
  Does there exist a constant $C$ such that every graph $G$ satisfies
  $\ch(G^2)\le C\cdot\chi(G^2)\log(\chi(G^2))$? 
\end{problem}

\noindent
If the answer to Problem~\ref{pr:kos} is ``yes'', then the upper bound will
be tight, up to the value of the constant~$C$, as Kosar \emph{et
  al}.~\cite{KSRY14} constructed an infinite family of graphs $G$ with
unbounded $\chi(G^2)$ such that
$\ch(G^2)\ge C'\cdot\chi(G^2)\log(\chi(G^2))$ for some constant $C'$.

\medskip
Finally, our proof uses Kahn's proof of his theorem that the list chromatic
index $\ch'(G)$ of a graph $G$ is $(1+o(1))\chi'_f(G)$, and that theorem of
course implies that $\ch'(G)=(1+o(1))\chi'(G)$. This is an asymptotic
version of the celebrated List Colouring Conjecture.

\begin{conjecture}[List Colouring Conjecture]\mbox{}\\*
  For every graph $G$, $\ch'(G)=\chi'(G)$.
\end{conjecture}

\noindent
A more general conjecture was made by Gravier and Maffray{\cite{GrMa97}},
who conjectured that for every claw-free graph, the list chromatic number
equals the chromatic number. It is possible that advances on the List
Colouring Conjecture might be helpful towards Wegner's Conjecture.

%%%%%%%%%%%%%%%%%%%%%%%%%%%%%
%%%%%%%%%%%%%%%%%%%%%%%%%%%%%

\end{document}